\documentclass[12pt]{article}

\usepackage{cite,epic,eepic,euscript,verbatim,amsmath,amssymb,amsthm,amscd,amsfonts,afterpage,float,bm,stmaryrd,paralist,mathrsfs}
\usepackage{booktabs,color,csvsimple,multirow,cancel,cases}
\usepackage{gensymb} 
\usepackage{graphicx,graphics,epsf,epsfig,colordvi,subfigure,caption,wrapfig,epstopdf}
\usepackage{algpseudocode}
\usepackage{authblk}
\usepackage{lipsum}
\usepackage[titletoc]{appendix}
\usepackage{setspace}
\usepackage{empheq}
\usepackage{ulem}

\usepackage{fancyhdr}              
\usepackage{hyperref}              
\usepackage{makeidx}
\usepackage{multirow}

\usepackage{enumerate}

\usepackage{listings}
\usepackage{enumitem}

\usepackage{wrapfig}

\theoremstyle{plain}
\newtheorem{definition}{Definition}[section]
\newtheorem{theorem}{Theorem}[section]
\newtheorem{lemma}[theorem]{Lemma}
\newtheorem{proposition}[theorem]{Proposition}
\newtheorem{corollary}[theorem]{Corollary}

\theoremstyle{definition}

\theoremstyle{remark}
\newtheorem{rmk}{Remark}

\numberwithin{equation}{section}

\newcommand{\R}{\mathbb{R}}

\newcommand{\bn}{\mathbf{n}}
\newcommand{\br}{\mathbf{r}}
\newcommand{\bv}{\mathbf{v}}

\graphicspath{{figs/}}

\begin{document}

\title{Kinetics of an Expanding Bacterial Colony: Continuum Modeling and Analysis  }
\author[1]{Bo Li}
\author[2]{Mykhailo Potomkin}

\affil[1]{Department of Mathematics,
University of California San Diego\\

Email: bli@math.ucsd.edu} 

\affil[2]{Department of Mathematics, 
University of California Riverside\\

Email: mykhailp@ucr.edu}

\maketitle

\begin{abstract}
  We study the spatiotemporal dynamics of the 
  expansion of a bacterial colony on a hard substrate. Nutrient from the substrate diffuses into the colony and 
  is taken up by the bacterial cells for them to grow and divide, expanding the initial monolayer
  and then pancake-shaped colony. The concentration of nutrient
  determines the local cell growth rate in the colony. Mass conservation relates such local growth rate
  with velocity which is approximated to be proportional to the pressure gradient by  Darcy's law. 
  Altogether, the growing colony
  is modeled as a moving-boundary problem with the nutrient concentration and pressure solving a reaction-diffusion equation and 
  Laplace's equation, respectively. 
   We analyze the self-consistent moving-boundary model with respect to different geometrical setting. For a 
    general three-dimensional cylindrically symmetric model, we study the steady-state nutrient concentration. 
    We construct and analyze a 
   one-dimensional model for vertical expansion and a two-dimensional disk model for radial expansion 
   of the colony. Our analysis finds that 
   the nutrient depletes into the colony and slows
   down the vertical expansion of the colony. The vertical level where the nutrient concentration
   reaches the Monod constant, a threshold below which individual bacteria hardly grow, lowers down 
   exponentially fast. 
   We also estimate the 
   asymptotic radial expansion rate. Moreover, we establish that the region where the nutrient concentration 
   is above the threshold, allowing bacteria to grow and the colony to expand radially, is a ring-shaped peripheral region of fixed thickness.
   All these are consistent with experiment and agent-based simulations reported in literature. 

   \vspace{4 mm}

\noindent
{\bf Keywords.} Bacterial colony, expansion kinetics, nutrient depletion, 
continuum model, reaction-diffusion equations, asymptotic analysis. 

\end{abstract}

\tableofcontents

\allowdisplaybreaks

\section{Introduction}
\label{s:Introduction}

Bacterial colonies are building blocks and model systems of biofilms that are 
heterogeneous structures 
often attached to surfaces with extracellular polymeric substances excreted by bacterial cells. 
Biofilms are found almost everywhere on earth, in soils, oceans, and animal and human bodies.  Understanding
bacterial biofilms has therefore significant impacts on our health and environment
\cite{StewartFranklin_Rev2008,Kolter_BiofilmRev_TrendsMicrobiol2005,Costerton_Rev1995,Greenberg_Science1999,Flemming_Rev2010,Potera_Science2008,Monds_Review_TrendsMicrobiol2009,Weitz_ComplexFluids_MRSBulletin2011}. 
Modeling and their mathematical analysis of colony expansion provide a framework for understanding the dynamics of complex biofilm systems.

In this work, we consider the expansion of a bacterial colony on a substrate such as an agar plate. 
Individual bacterial cells in the colony take nutrients from the substrate and the 
surrounding environment to grow and divide, generating 
the force that expands the colony. 
After an initial period of exponential growth ($\sim 5$  hours) starting 
from a few cells, the colony keeps growing
from a monolayer to vertical expansion, forming a pancake-shaped three-dimensional structure, lasting
for a few days; 
cf.\ Figure~\ref{f:colony}. 
The expanding bacterial colony exhibits complex multiscale spatiotemporal patterns and dynamics, 
resulting from the interplay between
individual cell activities such as growth, division, and movement, the cell-cell and 
cell-environment mechanical interactions, and the metabolic interactions
 of nutrients and other biochemical substances such as glucose, oxygen, nitrogen, 
and wastes produced by bacteria 
\cite{Wimpenny_1979,Wimpenny_1981,Wimpenny_1983,Rieck_1973,Warren_eLife2019,Kannan_NatCommun2025,
Coombs_1987,Nemenman_PLoS2017,Schulten_BMC2015,Syu_2D3D_PLoSONE2012,Drescher_eLife2021,Reyrolle_1979,Cooper_1968}.

\begin{figure}[tp]
\begin{center}
\includegraphics[width=0.375\textwidth]{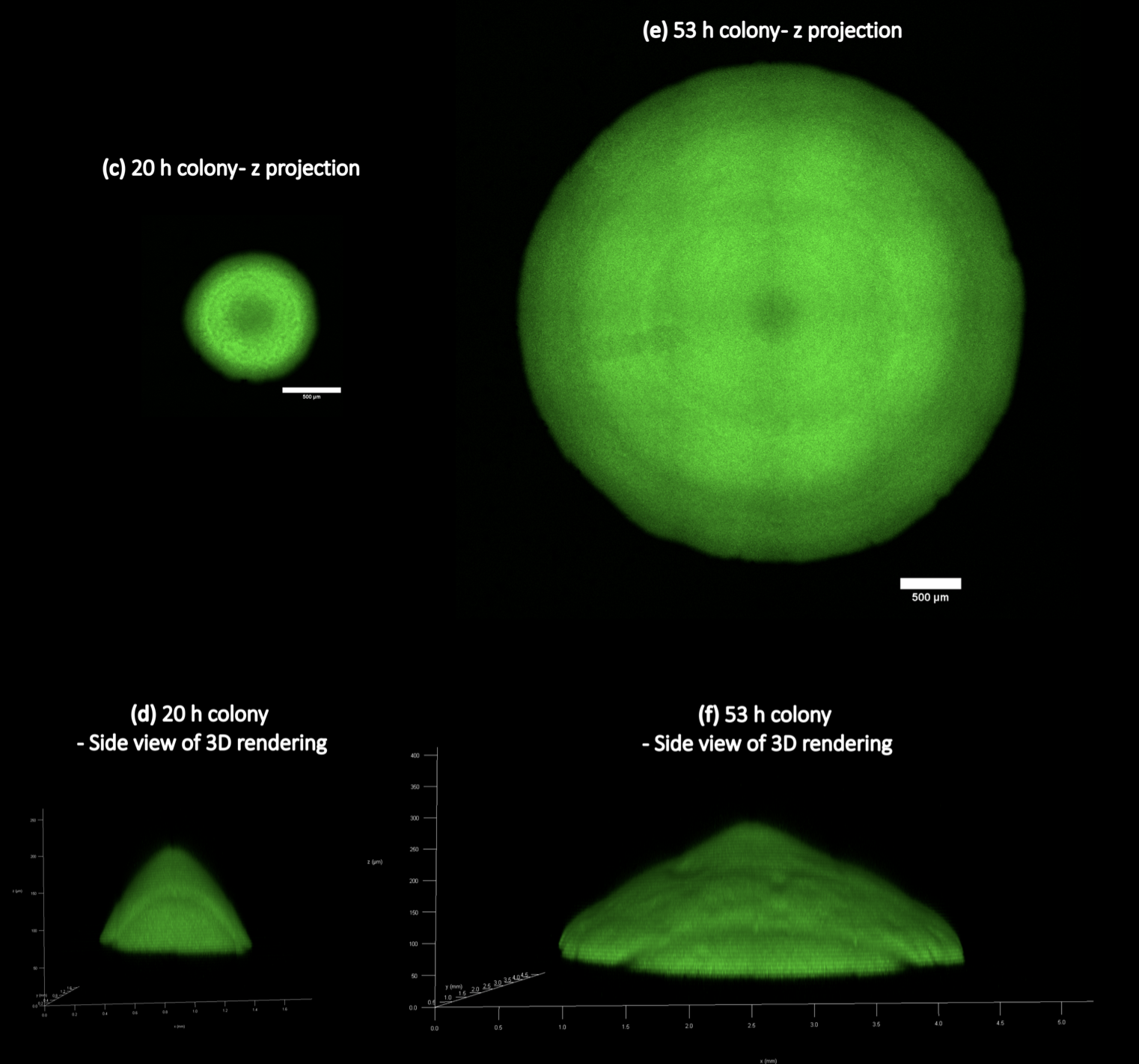}
\includegraphics[width=0.6\textwidth]{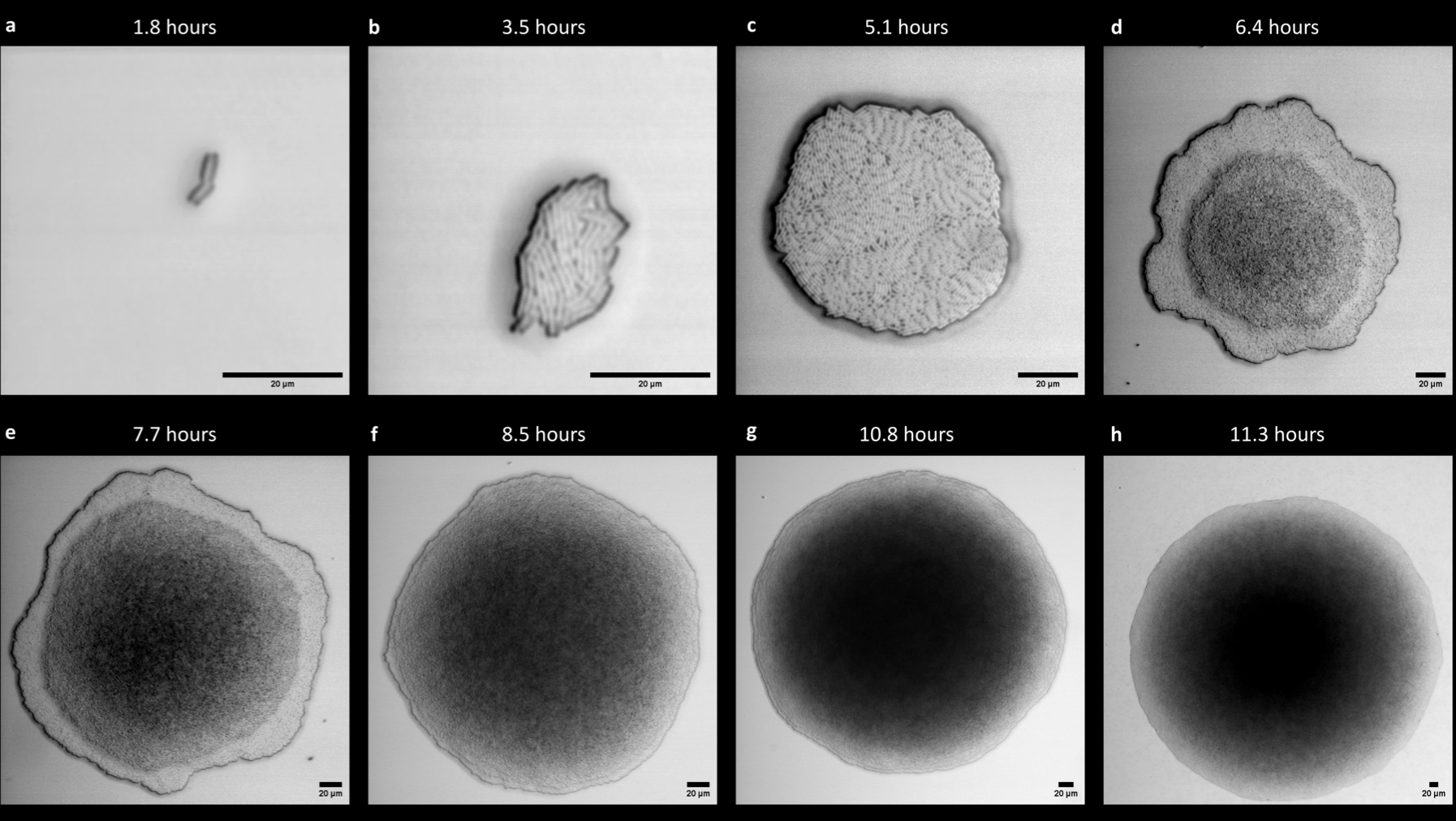}
\end{center}
\caption{Experiment: Confocal microscopy images of {\it E.~coli} colonies (Left) and the 
dynamics of early colony expansion 
(Right). Adapted from Fig.~S1 and Fig.~S2 of \cite{Kannan_NatCommun2025} (open access through Nature Communications).} 
\label{f:colony}
\end{figure}

Despite of enormous  complexities of an expanding colony comprised of millions to billions of individual bacterial cells,
robust kinetic behaviors of the macroscopic colony dynamics have long been observed experimentally. 
For instance, after the initial period of  exponential expansion, 
the colony radius increases with a constant speed for up to $\sim$ 65 hours while 
the colony height increases at a nearly  constant speed for 10 - 15 hours but then slows down significantly 
\cite{Pirt_1967,Wimpenny_1979,Palumbo_JGenMicrobiol1971,Kannan_NatCommun2025,Warren_eLife2019,Wimpenny_1981,Syu_2D3D_PLoSONE2012,Yunker_PNAS2023}. (These experiments are conducted with a small colony-forming unit, neglecting the influence from neighboring 
colonies. For otherwise the kinetic behaviors can be different; cf.\ e.g., \cite{Ernebjerg_2012}.)

Pirt (1967) \cite{Pirt_1967}, Cooper et al.\ (1968) \cite{Cooper_1968}, and Lewis and Wimpenny (1981) \cite{Wimpenny_1981} 
were among the early works discovering the kinetic behaviors and investigating 
the underlying mechanisms of the bacterial colony expansion. 
They argued that 
the constant radial expansion results from the exponential growth of cells in the colony 
peripheral region where the nutrients are abundant and the space is unlimited, while the vertical 
expansion is controlled by the nutrients penetrating into the colony with the vertical slowdown resulting
from the nutrient depletion. 

More recently, 
Warren et al.\ (2019) \cite{Warren_eLife2019} and Kannan et al.\ (2025) 
\cite{Kannan_NatCommun2025} developed a hybrid approach
combining the agent-based model 
(cf.\ also 
\cite{Haseloff_ACSSynthBiol2012,Tsimring_Ordering_PNAS2008,Levine_PhaseSeparation_PNAS2015,Aranson_CommunPhys2025,Levine_PRE2017}) that tracks individual cell activities and 
reaction-diffusion equations for modeling the metabolic dynamics, 
with the two components connected through the local cell growth rate.
The agent-based model describes the individual cell growth and division, 
cell-cell and cell-environment mechanical interactions, and the cell movement governed by Newton's law of motion. 
The hybrid simulations
successfully predicted the experimentally observed kinetic behaviors of expanding 
{\it E.~coli} colonies on agar for up to $\sim 65$ hours. Their 
quantitative findings indicate that the onset of mechanical buckling 
during the monolayer growth that leads to the transition from the planar
to three-dimensional colony expansion and 
the existence of a ring-shaped peripheral region with a fixed
width ($\sim 20\, \mu m$) of a monolayer of growing cells 
together determine the constant
speed of the radial expansion for up to $ \sim 65$ hours. 
Clearly, the nutrient depletion contributes to the vertical slowdown. 
These studies further found that colony maintenance consumes nutrients, eventually leading to nutrient depletion, cell starvation, and cell death, which ultimately halt the colony growth.
They also showed that in various regions of the colony, 
such as the top and the interface near the agar, cells continue to grow either aerobically or anaerobically.

While the hybrid modeling with the agent-based description  can examine detailed structures 
and dynamics of an expanding colony from individual cells, it is limited in terms of 
computational power by the size of an underlying system. 
 (This is in fact realized by Kannan et al.\
\cite{Kannan_NatCommun2025} where a reduced (1+1) geometry, one horizontal
substrate dimension and one vertical growth dimension,  is used.)

Here, we construct and analyze a moving-boundary reaction-diffusion model for the expansion of a bacterial colony on a substrate \cite{Warren_eLife2019,Kannan_NatCommun2025,martinez2022morphological,He_BMB2023,
Tsimring_eLife2020,Fu_PNAS2025,Klapper_SIAMRev2010,Yunker_NatPhys2024}. 
With the geometry  shown in Figure~\ref{f:agarcolony}, 
this model consists of three components. 

\begin{figure}[H]
\centering
\includegraphics[width=0.62\textwidth]{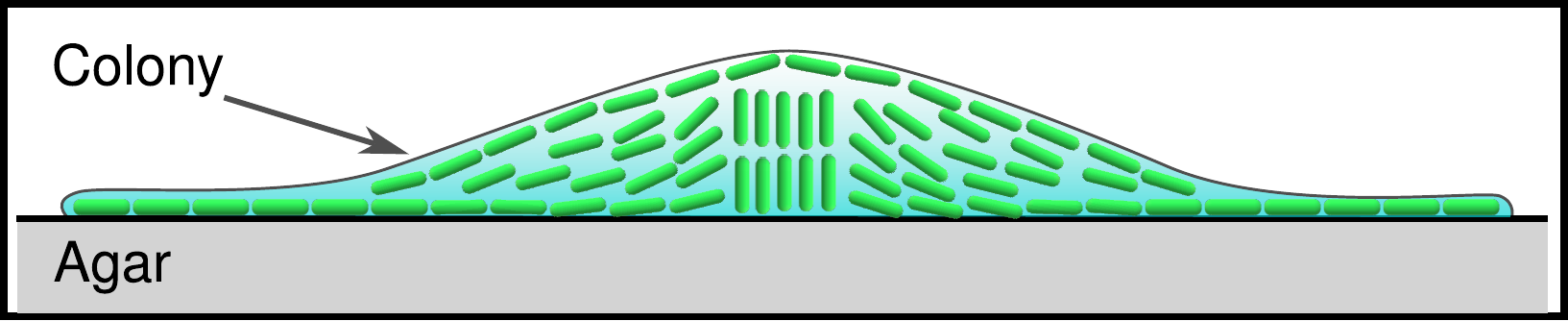}
\caption{{ Schematic of the substrate-colony system.}
}
\label{f:agarcolony}
\end{figure}

(1) {\it A set of reaction-diffusion
equations} for the concentrations $c_j = c_j(\br, t) $ $(j = 1, \dots, M)$ 
of nutrients and other chemicals (such as glucose, oxygen, nitrogen, acetate, etc.)


\begin{equation}
\label{eq:diffusion_interface_twocol}
\left\{
\begin{array}{lll}
\partial_t c_j
= D_{j,+} \Delta c_j
+ P_j(c_1,\dots,c_M)
- Q_j(c_1,\dots,c_M),
& \mbox{in colony}, & \\[0.4em]
\partial_t c_j
= D_{j,-} \Delta c_j,
& \mbox{in substrate}, & \\[0.6em]
\left.\begin{array}{l}
c_{j,+} = c_{j,-} \\
D_{j,+}\partial_n c_{j,+}
= D_{j,-}\partial_n c_{j,-}
\end{array}\right\}
&
 \begin{array}{c}
 \hspace{-60pt}\text{on colony-agar interface.}
 \end{array} &
\end{array}
\right.
\end{equation}
Here, $D_{j,\pm}$ are the diffusion constants
for the $j$th species in the colony and substrate, respectively, $P_j$ and $Q_j$ are the
rate of production and consumption of the $j$th species, and in the last equation
$c_{j,\pm} $ denote the values of $c_j$ from the colony and substrate, respectively, and $\partial_n$ is
the normal derivative at the colony-agar interface. 

(2) {\it The local growth rate, velocity field, and pressure field.}
The explicit form of the local growth rate 
\begin{equation} 
\label{generallambda}
\lambda = \lambda( c_1, \dots, c_M) \qquad \mbox{in colony}
\end{equation}
needs to be determined. For a single nutrient concentration 
$c $, we can define the local growth rate by the Monod kinetic form: $\lambda(c) \propto c / ( c + 1).$ 
An example of such $\lambda$ for concentrations
of three different nutrients or biochemical species is given in Kannan et al.\ (2025) \cite{Kannan_NatCommun2025}. 
Mass conservation leads to 
\[
\nabla \cdot (\rho \bv ) = \rho \lambda \qquad \mbox{in conlony},
\]
 where $\rho$ is the cell density. We will assume that the density $\rho$
 is a constant as suggested by agent-based simulations \cite{Warren_eLife2019,Kannan_NatCommun2025}. 
 Darcy's law provides the approximation 
 \[
 \bv = - \xi \nabla p, 
 \]
with $\xi > 0$ the friction constant. Combining all these, we obtain
\begin{equation}
\label{generalpressure}
- \xi  \Delta p = \lambda (c_1, \dots, c_M) \qquad \mbox{in colony}. 
\end{equation}

(3) {\it  The motion law for the moving colony surface.} 
The normal velocity $V_n$ is given by 
\begin{equation}
\label{generalVn}
V_n = \bv \cdot \bn - \gamma H = - \xi  \partial_n p -\gamma H \qquad 
\mbox{on the colony surface},
\end{equation}
where $\bn $ and $H$ are the exterior unit normal and the mean curvature of the colony surface and 
$\gamma> 0$ is the surface tension constant. 

We remark that the individual cell activities are not tracked in the continuum model 
\eqref{eq:diffusion_interface_twocol}--\eqref{generalVn} but the nutrient dynamics modeled more
efficiently by the reaction-diffusion equations
determines the rate of local cell growth which in turn determines the consumption of nutrient and 
the colony expansion dynamics.


To understand the essential properties of metabolic dynamics 
of a growing colony, 
we consider here in the model only a single bacterial species and only one type of nutrient which is the glucose
during a developmental phase of the growth, approximately between 6 -- 24 hours
starting from incubation, same setup as  in  \cite{Warren_eLife2019}.
We study the kinetics of an expanding bacterial colony in several steps. 
\begin{compactenum}
\item[(1)]
We show that, with a fixed configuration of the substrate-colony system, there is a unique equilibrium
concentration.  This is done by realizing that such a concentration field minimizes uniquely the convex
functional 
\[
{\mathcal E}[c] = \int \left[ \frac{1}{2} |\nabla c|^2 + \Lambda(c) \right] dV 
\]
among all the admissible concentrations, where the integral region includes both the colony and the substrate, 
and in the dimensionless form  $\Lambda'(c) = \lambda (c) = c / (1 + c).$ 
\item[(2)]
We construct a 
one-dimensional moving-boundary model that is reduced from the 
full three-dimensional model to study the kinetics of the vertical expansion in details.
We analyze the model to show that the nutrient 
concentration $c = c(z)$
decays quadratically first and then exponentially in the colony region $z > 0$ and that the 
point $z_\ast > 0$ defined by $c(z_\ast) = K$, where $K$ is the Monod constant, 
moves exponentially fast towards $z = 0$, the interface between the substrate and the colony region; 
cf.\ Figure~\ref{fig:0} (a). We also show that the speed $h'(t)$ of the colony height $h = h(t)$ has a constant 
limit as $t \to \infty.$
\item[(3)]  
We construct 
a two-dimensional disk model, again reduced from the full three-dimensional model,
for the monolayer growth of a bacterial colony, as illustrated in Figure~\ref{fig:0} (b). 
We analyze the nutrient 
concentration profile and the radial expansion kinetics. 
We show the existence of a ring-shaped peripheral region of the colony
with sufficient nutrient allowing cells in this region to grow for long time. We also
prove that the radius $R(t) = O(1)$ as $t\to \infty$, indicating the constant radial expansion. 
\end{compactenum}

\begin{figure}[t]
\centerline{\includegraphics[width=0.5\textwidth]{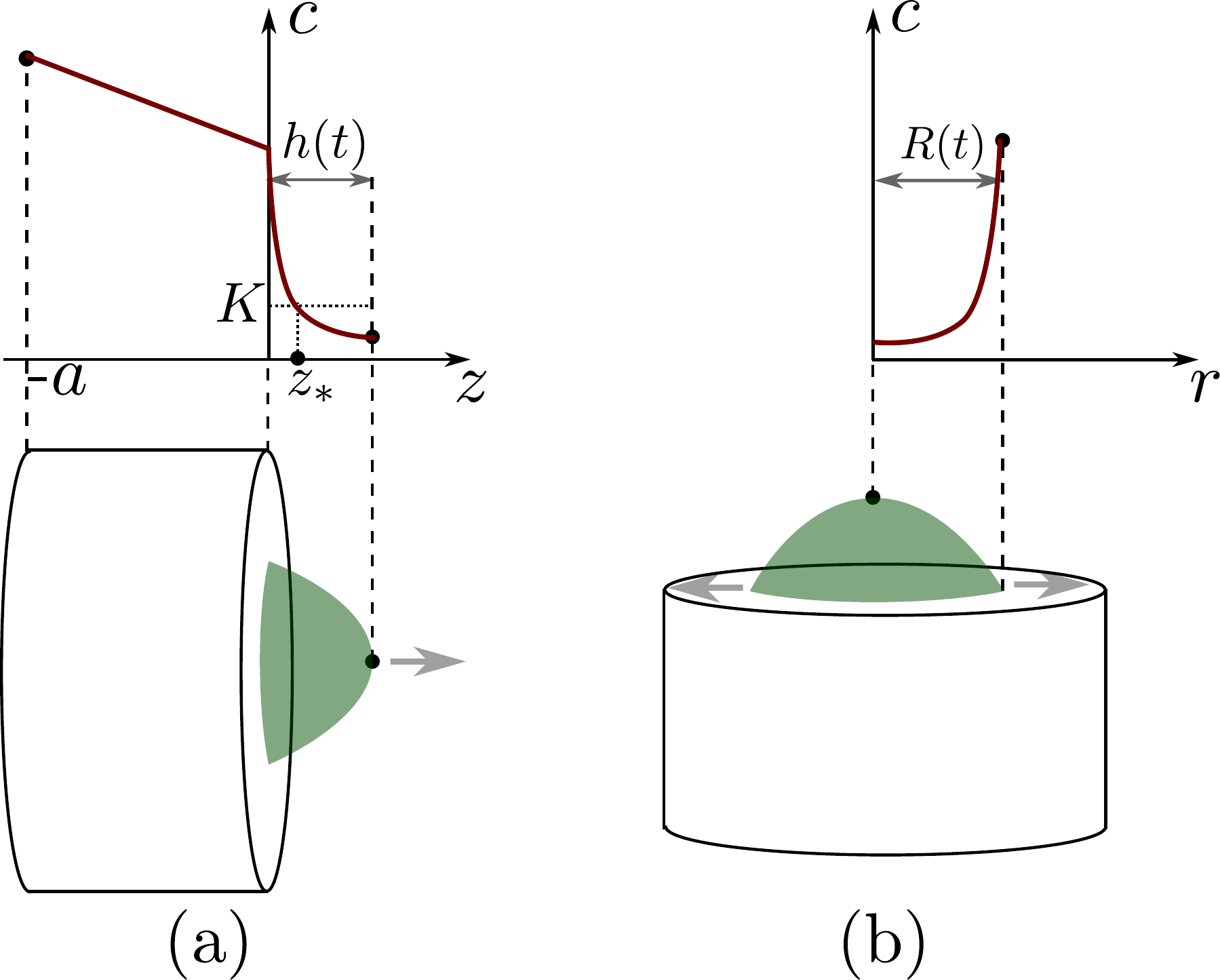}}
\caption{
Schematic of a colony-substrate configuration at some time $t$. (a) Definition of the 
colony height $h(t)$ and the level $z_\ast$ at which the concentration reaches the Monod 
constant~$K$. (b) Definition of the colony radius $R(t)$.} 
\label{fig:0}
\end{figure}

The rest of the paper is organized as follows: 
In section~\ref{s:ContinuumModel}, we describe the general, moving-boundary model in the cylindrically symmetric, 
three-dimensional setting
for the expansion of 
a bacterial colony and non-dimensionalize the model.  We also prove the existence, uniqueness, and some properties 
of the steady-state solution to the reaction-diffusion equation of the nutrient concentration with a fixed
configuration of colony.
In section~\ref{s:1D}, we construct a one-dimensional model for the vertical expansion of the colony, 
perform the related numerical simulations and asymptotic analysis, and analyze the nutrient concentration profile
and eventually the rate of vertical expansion.  
In section~\ref{s:disk}, we  construct a two-dimensional disk model for the radial expansion kinetics, perform numerical 
simulations and asymptotic analysis, and analyze the 
the nutrient profile and the asymptotic behaviors of the radial expansion.  
Finally, in section~\ref{s:Conclusion}, we draw conclusions of our work and discuss several issues for further studies.


\section{A three-dimensional model and steady-state nutrient concentrations}
\label{s:ContinuumModel}

\subsection{Model formulation}
\label{ss:3Dmodel}

We denote the system region that depends on time $t$ by $\Omega = \Omega(t)$.
This region consists of two parts. One 
is the colony region $\Omega_+ = \Omega_+(t)$ which expands with time
and the other is the substrate (i.e., agar) region $\Omega_-$ that is assumed to be fixed all the time;
cf.\ Figure~\ref{f:SystemRegion}. 
We shall assume the system is cylindrically symmetric.

\begin{figure}[h]
\begin{center}
\includegraphics[width=1.0\textwidth]{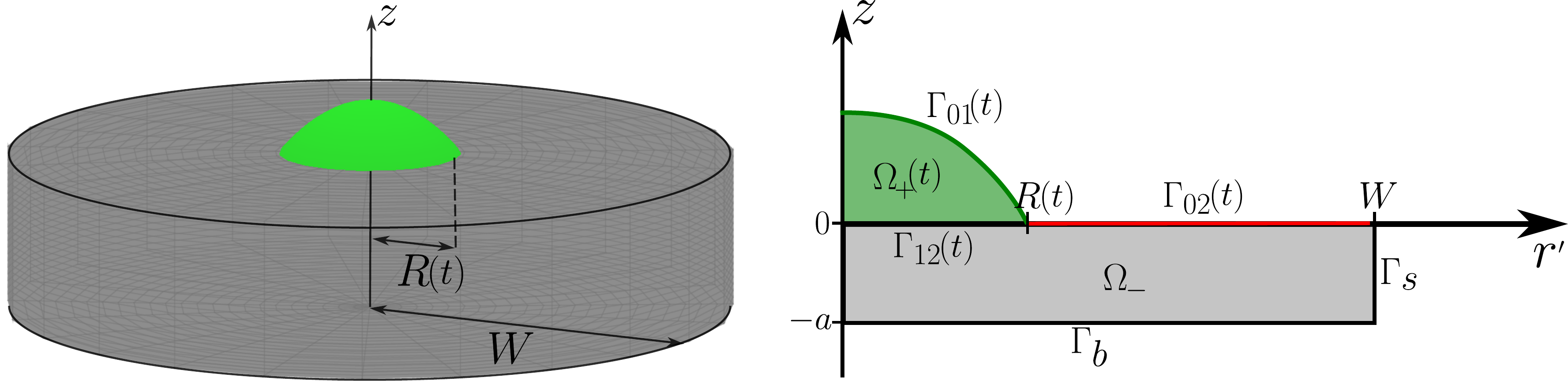}

$ \qquad \qquad \qquad $ (a) $\qquad \qquad \qquad \qquad \qquad  \qquad \qquad \qquad \qquad $  (b) $\qquad \qquad \qquad \qquad $
\end{center}
\caption{ 
{The geometry of 
the three-dimensional continuum model of an expanding bacterial colony.}
(a) The system region $\Omega = \Omega (t)$ at time $t$ consisting of 
the colony region $\Omega_+ = \Omega_+(t)$ (in green) and the substrate region $\Omega_-$ (in gray) which is fixed throughout.
(b) The same region in cylindrical coordinates $(r',z)$ with identification of the boundary portions $\Gamma_{01}(t)$, $\Gamma_{02}(t)$, $\Gamma_{12}(t)$, $\Gamma_{\rm b}$, and $\Gamma_{\rm s}$.}
\label{f:SystemRegion}
\end{figure}

 With the notations
\[
\br = (x, y, z) = (\br', z),  \quad \br' = (x, y), \quad r = | \br |=\sqrt{x^2+y^2+z^2}, 
 \quad r' = |\br'| = \sqrt{x^2+y^2}, 
\]
we define 
\begin{align*}
&\Omega_- = \{ \br = (\br', z): r'< W   \mbox{ and }  -a < z < 0\}, \\
&\Omega_+(t) = \{ \br = (\br', z) : 0 < z < h( r',  t)
\ \mbox{and} \ r'< R(t) \} \qquad \forall t > 0. 
\end{align*}
Here, $a$ and $W$ are two given positive numbers, and the function
$h = h(r', t) > 0$ is a $C^1$-function defined for $r' \in [0, R(t)]$ and $t > 0$ and satisfies 
the following properties: 
\[
\partial_{r'} h (0, t) = 0, \quad  \partial_{r'} h(r', t) < 0 \quad 
\forall r' \in (0, R(t)), \quad  h(R(t), t) = 0, \quad \mbox{and} \quad R'(t) > 0
\]
for all $t > 0.$
We denote the air-colony, air-agar, and 
colony-agar interfaces by $\Gamma_{01} = \Gamma_{01}(t)$, $\Gamma_{02}= \Gamma_{02}(t)$, 
and $\Gamma_{12} = \Gamma_{12}(t)$, respectively:
\begin{align*}
   & \Gamma_{01}(t) = \{ \br =  (\br', z): z = h(r', t) \mbox{ and } r' < R(t) \}, \\
   & \Gamma_{02}(t) = \{ \br = (\br', z): R(t) < r' < W \mbox{ and } z = 0 \}, \\
   & \Gamma_{12}(t) = \{ \br = (\br', z): r' < R(t) \mbox{ and } z = 0 \}. 
\end{align*}
The initial colony region $\Omega_+(0)$ and the initial agar-colony interface 
$\Gamma_{01}(0)$ are defined by
\begin{align*}
    &\Omega_+(0)  = \{ \br = (\br', z) : 0 < z < h_0( r')
\ \mbox{and} \ r'< R_0 \}, \\
& \Gamma_{01}(0) = \{ \br =  (\br', z): z = h_0(r') \mbox{ and } r' < R_0 \}, 
\end{align*}
where $R_0 > 0$ and $h_0 \in C^1([0, R_0])$ are given with 
$h_0(r')$ decreases from $r'=0$ to $r' = R_0$, $h_0'(0) = 0$, and $h_0 (R_0) = 0.$ 
We denote the lateral side and the bottom face of the boundary of agar region 
by $\Gamma_{\rm s}$ and $\Gamma_{\rm b}$, respectively: 
\begin{align*}
    & \Gamma_{\rm s} = \{\br = (\br', z): r' = W \mbox{ and } -a < z < 0\}, \\
    & \Gamma_{\rm b} = \{ \br = (\br', z): r' < W \mbox{ and } z = -a \}. 
\end{align*}

The basic components of the model include the normal velocity of the moving colony surface, 
the nutrient concentration that is governed by a reaction-diffusion equation, and the pressure
in the colony that is related to the normal velocity of the colony surface. 
Let us denote by $c = c(\br, t)$ the local concentration of
the nutrient at a spatial point $\br$ and time $t$. It is governed by the 
reaction-diffusion equation 
\begin{equation}
\label{RDE}
\partial_t c = \nabla \cdot D \nabla c - 
\frac{\rho}{Y}\chi_+ \lambda(c)  \qquad \forall \br \in \Omega(t) \  \mbox{ and } \ t > 0, 
\end{equation}
where $D$ is the diffusion coefficient defined by 
\begin{equation}
\label{D}
D = \left\{ 
\begin{aligned}
    & D_+  \quad &  & \mbox{in }  \Omega_+(t), \\
    & D_- \quad & & \mbox{in } \Omega_-,
\end{aligned}
\right. 
\end{equation}
and $\chi_+ = \chi_{\Omega_+(t)}$ is the indicator function of the region $\Omega_+(t)$. Here
and below, 
$\rho> 0$ is the constant cell density in the colony, $Y > 0$ is a constant yield factor, 
and $\lambda: [0, \infty) \to [0, \infty)$ is the local growth rate of cell mass given  by 
the Monod kinetic form as follows: 
\begin{equation}
\label{lambda}
\lambda(c) = \lambda_{\rm B} \frac{c}{c+K}, 
\end{equation}
where $\lambda_{\rm B} > 0$ is the constant batch culture growth rate of cell mass  and 
$K > 0$ is a constant, the Monod constant \cite{Warren_eLife2019}. 
Note that Eq.~\eqref{RDE} is equivalent to 
\begin{subequations}
\label{GeneralInterfacProb}
  \begin{empheq}[left=\empheqlbrace]{align}
    \label{Inteface+}
    & \partial_t c = D_+ \Delta c - \frac{\rho}{Y} \lambda(c) & &  \forall \br \in \Omega_+(t) \ \mbox{ and } \ \forall t > 0, \\
    \label{Interface-}
    & \partial_t c = D_- \Delta c  & &  \forall \br \in \Omega_- \ \mbox{ and }\  \forall t > 0, \\
    \label{InterfaceJump}
    & D_+ \partial_z c|_{ \{z >0\}}  = D_- \partial_z c|_{\{z <0\}} & &  \mbox{on } \Gamma_{12}(t) \ \mbox{ and }\  \forall t > 0. 
\end{empheq}
\end{subequations}
As in \cite{Warren_eLife2019}, we
 impose the boundary conditions for the nutrient concentration to be 
\begin{equation}
    \label{BC4c}
    c = c_{\rm s} \quad \forall \br \in \Gamma_{\rm s} \ \mbox{ and } \ t > 0 \qquad \mbox{and} \qquad 
\partial_n c = 0  \quad \forall \br \in \partial\Omega(t) \setminus \Gamma_{\rm s}, 
\end{equation}
where $c_{\rm s} > 0$ is a constant and $\partial_n$ denotes the normal derivative with $\bn$ the unit exterior normal at the 
boundary of $\Omega(t)$. The initial condition for the nutrient concentration is given by 
\begin{equation}
    \label{IC4c}
    c(\br, 0) = \left\{ 
    \begin{aligned}
    & 0 & &  \quad \forall \br \in \Omega_+(0), \\
    & c_{\rm ini} & & \quad \forall \br \in \Omega_-, 
    \end{aligned}
    \right.
\end{equation}
where $c_{\rm ini} > 0$ is a constant. 

The concentration $c = c(\br, t)$ determines the local growth rate of cell mass by \eqref{lambda}. 
Let us denote by
$\bv = \bv (\br, t)$ the local velocity at the spatial point $\br$ in the colony and time $t$. It follows 
from the mass conservation and the assumption that the cell density is a constant that 
\begin{equation}
\label{massconservation}
\nabla \cdot \bv = \lambda \qquad \forall \br \in \Omega_+(t) \ \mbox{ and }\  t > 0. 
\end{equation}
Invoking Darcy's law, we obtain 
\begin{equation}
\label{Darcy}
\bv = - \xi \nabla p \qquad \forall \br \in \Omega_+(t) \ \mbox{ and }\ t > 0, 
 \end{equation}
where $p  = p(\br, t)$ is the pressure at $\br \in \Omega_+$ and $t > 0$, and $\xi > 0$ is 
the friction constant. 
Combining \eqref{lambda}--\eqref{Darcy}, we obtain
\begin{equation}
    \label{pressure}
    \Delta p = - \frac{1}{\xi} \lambda(c) \qquad \forall \br \in \Omega_+(t) \ \mbox{ and } \ t > 0. 
\end{equation}
We impose the boundary condition 
\begin{equation}
\label{BC4pressure}
p = 0 \quad \forall \br \in \Gamma_{01}(t) \ \mbox{ and } \ t > 0 \qquad \mbox{and} \qquad 
\partial_n p = 0  \quad \mbox{on } \Gamma_{12}(t) \quad \forall t > 0. 
\end{equation}

The colony surface $\Gamma_{01}$ is moving with the normal velocity 
\begin{equation}
\label{Vn}
V_n = \bv \cdot \bn - \gamma H \qquad \forall \br \in \Gamma_{01}(t) \ \mbox{ and }\  t > 0,
\end{equation}
where $\bn$ is the unit normal at the surface $\Gamma_{01}(t)$ exterior to the colony $\Omega_+(t)$, 
$\gamma > 0$ is a constant which is the surface tension constant, and $H$ is the mean curvature of 
the surface. 
By the representation of the  surface $\Gamma_{01}(t)$ with the function $h$, we have the unit normal 
and the mean curvature 
\begin{align*}
    & \bn = \bn(\br', t) =  - \frac{(\nabla_{\br'} h(r',t), -1)}{\sqrt{1+ (\partial_{r'} h(r', t))^2}}, \\
    & 2 H = 2 H(\br', t) = \frac{1}{\sqrt{1+(\partial_{r'} h(r', t))^2 }}
    \left[  \frac{\partial_{r'}^2 h(r',t) }{1 + (\partial_{r'} h(r', t))^2 } 
    + \frac{\partial_{r'}h (r', t)}{r'}  \right]. 
\end{align*}
Consequently, by \eqref{Darcy}, we have 
 
\begin{equation}
{
\partial_t h = \xi(\partial_{r'}p\,\partial_{r'} h-\partial_z p )+\dfrac{\gamma}{2}\left[\dfrac{\partial_{r'}^2 h}{1+(\partial_{r'} h)^2}+\dfrac{\partial_{r'} h}{r'}\right]
}.\label{FinalVn2}
\end{equation}


\medskip 

{\it In summary}, the model consists of the reaction-diffusion equation \eqref{RDE} for the nutrient concentration 
$c = c(\br, t)$ together with the boundary conditions \eqref{InterfaceJump}, \eqref{BC4c} and initial conditions \eqref{IC4c}, the Laplace equation \eqref{pressure}
for the pressure $p = p(\br, t)$ together with the boundary conditions \eqref{BC4pressure}, and the motion law 
\eqref{Vn} or \eqref{FinalVn2} for the normal velocity of the moving colony surface. 

\begin{rmk}\label{rmk:edge-condition}
For a fixed domain $\Omega$, the problems for concentration $c$ and pressure $p$ are
well posed, as we show later in this section. For the moving-boundary problem, the formulation above is incomplete 
and one would not expect uniqueness, and possibly not even a closed evolution law for $\Omega(t)$. 
An additional equation, that is for the contact line, where the colony domain boundary meets the substrate (or, equivalently, an equation for $R(t)$) is 
possibly needed. 
Such an equation arises in the study of capillarity problems \cite{finn1986,huh1971hydrodynamic}. Defining
such an equation is known  to be  subtle as it needs to be consistent with the equation for the normal 
velocity for the colony surface \eqref{Vn} and also creates no singularities. In this work, we will focus on the analysis of reduced models in Sections~\ref{s:1D} and \ref{s:disk},  where this issue does not exist.     

\end{rmk}

We now derive a  dimensionless form of our three-dimensional (3D) model. 
Let us introduce the  dimensionless variables
\[
\hat{\boldsymbol{r}} = \dfrac{\boldsymbol{r}}{\mathcal{L}}, \qquad \hat{t}=\dfrac{t}{\mathcal{T}}, \qquad \hat{c}=\dfrac{c}{K}, \qquad 
\hat{p}=\dfrac{p}{\mathcal{P}},
\]
where $\mathcal{L}$, $\mathcal{T}$, and $\mathcal{P}$ are the characteristic scales that are determined below. 
Rewrite  equations \eqref{Inteface+}, \eqref{Interface-}, \eqref{InterfaceJump}, \eqref{pressure}, and \eqref{Vn}  in these 
variables:
\begin{align*}
    & \dfrac{K}{\mathcal{T}}\partial_{\hat{t}} \hat{c} = \dfrac{D_+K}{\mathcal{L}^2} \Delta_{\hat{\boldsymbol{r}}} \hat{c} - \frac{\rho\lambda_B}{Y} \dfrac{\hat{c}}{\hat{c}+1},\\
    & \dfrac{K}{\mathcal{T}}\partial_{\hat{t}} \hat{c} = \dfrac{D_-K}{\mathcal{L}^2} \Delta_{\hat{\boldsymbol{r}}} \hat{c},   \\
    &  \dfrac{KD_+}{\mathcal{L}} \partial_{\hat{z}} \hat{c}|_{ \hat{z} > 0}  =\dfrac{KD_+}{\mathcal{L}} \partial_{\hat{z}} \hat{c}|_{\hat{z} < 0},\\
    &\dfrac{\mathcal{P}}{\mathcal{L}^2}\Delta_{\hat{\boldsymbol{r}}}\hat{p} = - \dfrac{\lambda_B}{\xi} \dfrac{\hat{c}}{\hat{c}+1},\\
    &
    \dfrac{\mathcal{L}}{\mathcal{T}}\hat{V}_n = -\dfrac{\xi \mathcal{P}}{\mathcal{L}} \nabla_{\hat{\boldsymbol{r}}} \hat{p} \cdot \hat{\boldsymbol{n}} - \dfrac{\gamma}{\mathcal{L}} \hat{H},
\end{align*}
where $\hat{\boldsymbol{n}}$ is defined using the dimensionless variables. 
The regions $\hat{\Omega}_+(\hat{t})$ and $\hat{\Omega}_-$, and the boundaries 
$\hat{\Gamma}_{01}(\hat{t}\,)$, $\hat{\Gamma}_{02}(\hat{t}\,)$, $\hat{\Gamma}_{12}(\hat{t}\,)$, 
$\hat{\Gamma}_{\rm b}$, and $\hat{\Gamma}_{\rm s}$ are defined similarly. 
For the boundary and initial conditions, we set 
\[
\hat{c}_{\rm s} = \frac{c_{\rm s}}{K}, \qquad 
\hat{c}_{\rm ini}  = \frac{c_{\rm ini}}{K}, \qquad 
\hat{R}_0 = \frac{R_0}{\mathcal L}, \qquad 
\hat{h}_0 = \frac{h_0}{\mathcal L}, 
\]
with the  characteristic scales 
\[
\mathcal{L}=\sqrt{\dfrac{YKD_+}{\rho \lambda_B}}, \qquad \mathcal{P} = \dfrac{\lambda_B\mathcal{L}^2}{\xi}=\dfrac{YKD_+}{\rho \xi}, \qquad \mathcal{T}=\dfrac{1}{\lambda_B}.
\]
Introducing
\[
\eta = \frac{\mathcal{L}^2}{\mathcal{T}D_+} = \frac{YK}{\rho}, \qquad 
d_0=\frac{D_-}{D_+},\qquad  \zeta =  \frac{\gamma \mathcal{T}}{\mathcal{L}^2}, 
\]
and dropping all the hats, we obtain
the dimensionless form of the system
\begin{subequations}\label{eq:nondimensional_model}
\begin{empheq}[left=\empheqlbrace]{align}
    & \eta\partial_{{t}} {c} =\Delta {c} -  \dfrac{{c}}{{c}+1}\quad \text{ in }\Omega_{+}(t),\\
    & \eta\partial_{{t}} {c} = d_0 \Delta {c}\quad \text{ in }\Omega_{-},  \\
    & \qquad  \partial_{{z}} {c}|_{ \{z > 0\}}  =d_0 \partial_{{z}} {c}|_{\{z < 0\}} \quad  \text{ on }\Gamma_{12}(t),\\
    & \qquad c = c_{\rm s} \ \text{ on }\Gamma_{\rm s} \quad \text{ and }\quad \partial_n c = 0\ \text{ on }\partial\Omega(t)\setminus \Gamma_{\rm s},\\
    &\qquad c (t=0)= 0 \ \text{in } \Omega_- \quad \mbox{ and } \quad c (t=0)= c_{\rm ini} \ \mbox{in } \Omega_+(0), 
    \\
    &\Delta{p} = -  \dfrac{{c}}{{c}+1} \quad \text{ in }\Omega_+(t),\label{nondim:p-pde} \\
    & \qquad p=0 \ \text{ on }\Gamma_{01}(t) \quad \text{ and }\quad \partial_n p = 0 \ \text{ on }\Gamma_{12}(t), \label{nondim:p-bc}
    \\&
    {V}_n = -\partial_n  {p}  - \zeta {H}\quad \text{ on }\Gamma_{01}(t), \label{nondimVn}\\
    & \qquad \Gamma_{01}(0) = \{ (r', z): z = h_0(r') \ \mbox{ and } \ 0 \le r' < R_0\}. \label{nondimInitialG01}
\end{empheq}
\end{subequations}

\subsection{Steady-state nutrient concentrations}
\label{s:SteadyState}

In this subsection, 
we study the boundary-value
problem of the steady-state reaction-diffusion equation (cf.\ \eqref{RDE} and 
its dimensionless form in \eqref{eq:nondimensional_model}). 
Solutions of such problems represent steady-state nutrient concentrations and are often used to model bacterial colony expansion under the assumption that cell division is negligibly slow.
We state our results in the three-dimensional setting but it is readily verified that
these results also hold true for lower dimensional cases. We shall also consider the 
dimensionless form \eqref{eq:nondimensional_model}.

For simplicity, we denote the agar-colony interface by 
$\Gamma = \Gamma_{12}$. The agar and colony regions are $\Omega_-$ and $ \Omega_+$, respectively. 
We divide the boundary $\partial\Omega$ into two disjoint parts $\Gamma_{\rm D}=\Gamma_{\rm s}$
and  $\Gamma_{\rm N}=\partial \Omega \setminus \Gamma_{\rm s}$ with $\Gamma_{\rm D} \subset 
\partial \overline{\Omega}_-.$
Both $\Gamma_D$ and $\Gamma_N$ are assumed to be smooth and have positive surface measure. 
We consider the 
boundary-value problem
\begin{subequations}
\label{SimpleBVP}
\begin{empheq}[left=\empheqlbrace]{align}
\label{eq:simple}
& \nabla \cdot D_\Gamma \nabla  c - \chi_+ 
\lambda(c) = 0   \quad \text{in }\Omega,\\
\label{eq:simpleBC}
& c = c_0 \quad \text{on } \Gamma_{\rm D} \quad \mbox{and} \quad 
\partial_n c = 0 \quad \mbox{on } \Gamma_{\rm N},
\end{empheq}
\end{subequations}
where the diffusion coefficient is $D_\Gamma = d_0+\chi_+(1-d_0)$ with $\chi_+ = \chi_{\Omega_+}$, 
\begin{equation}
\lambda(c)=\dfrac{c}{1+c}\label{nondim-def-lambda}; 
\end{equation}
cf.\ Figure~\ref{f:lambdaLambda} (Left), 
$c_0\in H^1(\Omega)$ is a given  function 
such that $c_0 \ge 0$ in $\Omega$, 
and  $\{ \br \in \Gamma_{\rm D}: c_0(\br) > 0\}$ has a positive surface measure. 

\begin{figure}[H]
\begin{center}
\includegraphics[width=0.75\textwidth]{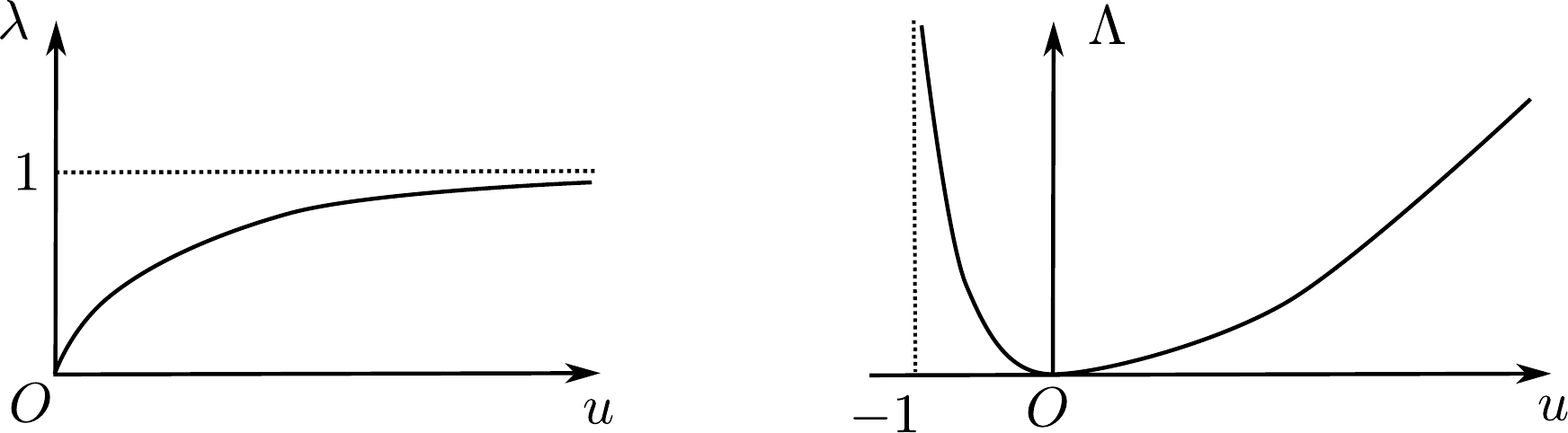}
\end{center}

\vspace{-4 mm}

\caption{The graph of the function $\lambda = \lambda(u)$ (Left) and $\Lambda = \Lambda (u)$ (Right)
 defined in \eqref{nondim-def-lambda} and \eqref{Fc}, respectively. 
}
\label{f:lambdaLambda}
\end{figure}

We denote
\begin{align*}
&\mathcal{A} = \{u \in H^1(\Omega): u \ge 0 \ \mbox{a.e. } \Omega
\mbox{ and } u = c_0 \mbox{ on } \Gamma_{\rm D} \}, \\
& \mathcal{A}_0 = \{ \xi \in H^1(\Omega): \xi  = 0 \ \mbox{ on } \ \Gamma_{\rm D} \}. 
\end{align*}
Here and below, the boundary value is understood as the trace. 
We call $c \in \mathcal{A}$ a weak solution to the boundary-value problem \eqref{eq:simple} if 
\begin{equation}
    \label{eq:weak}
\int_\Omega \left[ D_\Gamma \nabla c \cdot \nabla \xi + \chi_+ 
\lambda (c) \xi \,\right] \text{d}\boldsymbol{r} = 0
\qquad \forall \xi \in \mathcal{A}_{0}. 
\end{equation}
 Formally, 
the equation in \eqref{eq:simple} is the Euler--Lagrange equation of the functional 
$\mathcal{E}: \mathcal{A} \to \R $ defined by 
\begin{equation}
\mathcal{E}[u]=\int_\Omega\left[ \dfrac{D_\Gamma}{2}|\nabla u|^2 + \chi_+ 
\Lambda (u) \right] d\br, 
\label{eq:energyPre}
\end{equation}
where 
\begin{equation}
    \label{Fc}
    \Lambda (u) = u - \ln (1 + u) \qquad \forall u > -1; 
\end{equation}
cf.\ Figure~\ref{f:lambdaLambda} (Right). 
Note that that $\Lambda \in C^\infty( (-1, \infty))$ and it satisfies
\begin{align*}
&\Lambda'(u) = \lambda(u) > 0 \quad \mbox{and} \quad 
 \Lambda''(u) = \frac{1}{(u+1)^2 } > 0 \quad \forall u > -1,  \\
 & \Lambda(0) = 0 < \Lambda(u) \quad \forall u > -1 \ \mbox{ and } \ u \ne 0, \\
 &\Lambda (-1^+) = \lim_{u\to -1^+} \Lambda(u) = +\infty, 
 \quad \Lambda(+\infty) = \lim_{u \to +\infty} \Lambda(u) = +\infty, \quad \mbox{and} \quad
  \lim_{u\to +\infty} \frac{\Lambda(u)}{ u} = 1. 
\end{align*}

\begin{theorem}
    \label{t:3Dminimizer}
    Let $\Omega$ be a bounded domain in $\R^2$ or $\R^3$ with a Lipschitz-continuous boundary $\partial \Omega.$ 
    Assume a disjoint union $\Omega = \Omega_+ \cup \Omega_- \cup \Gamma$ with $\Omega_\pm$ bounded domains and 
    $\Gamma = \overline{\Omega_+} \cap \overline{\Omega_-}$ Lipschitz-continuous. 
    Assume also $\partial \Omega$ is the disjoint union of $\Gamma_D$ and $\Gamma_N$ each of which is 
    Lipschitz-continuous and has positive surface measure, and $\Gamma_{\rm D} \subset \overline{\Omega}_-.$
    There exists a unique $\hat{c} \in \mathcal{A}$ that minimizes $\mathcal{E}: \mathcal{A} \to \R$.
    Moreover, 
    the following statements are equivalent: 
    \begin{compactenum}
        \item[\rm (i)] $c\in \mathcal{A}$ is the minimizer of $\mathcal{E}: \mathcal{A} \to \R$; 
        \item[\rm (ii)] $c\in \mathcal{A} $ is the unique weak solution defined by \eqref{eq:weak}; and 
        \item[\rm (iii)]  
        $c\in \mathcal{A}$ satisfies $ c|_{\Omega_\pm} \in H^2({\Omega_\pm})\cap
        C^\infty(\Omega_\pm)$ and 
        $c$ is the unique solution to the following elliptic interface problem
   \begin{subequations}\label{eq:elliptic-interface-problem}
    \begin{empheq}[left=\empheqlbrace]{align}
       \label{D+Dc}
        &\Delta c - \lambda(c) = 0   &&\mbox{ in } \Omega_+,  \\
        \label{D-Dc}
        & d_0 \Delta c  = 0   && \mbox{ in } \Omega_-,  \\ 
        \label{cInterface}
        & \qquad  \partial_z c|_{\Omega_+} = d_0 \partial_z c|_{\Omega_-} \mbox{ on } \Gamma, \\
        \label{GammaDN}
        & \qquad c = c_0  \ \text{ on } \Gamma_{\rm D} \quad \mbox{and} \quad   
\partial_n c = 0 \ \mbox{ on } \Gamma_{\rm N}.
    \end{empheq}
    \end{subequations}
    \end{compactenum}
In particular, the minimizer $\hat{c}$ of $\mathcal E$ over $\mathcal A$ is positive point-wise in 
$\Omega_\pm.$
\end{theorem}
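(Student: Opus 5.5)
We use the direct method of the calculus of variations, then pass between the three formulations, and finish with regularity and the maximum principle.

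\textbf{Existence and uniqueness of the minimizer.} The set $\mathcal A$ is nonempty (it contains $c_0^+$, the positive part of an $H^1$ extension of $c_0$), convex, and weakly closed in $H^1(\Omega)$. Since $\Lambda\ge 0$ on $[0,\infty)$, we have $\mathcal E[u]\ge \tfrac{\min(1,d_0)}{2}\|\nabla u\|_{L^2}^2$. A Poincaré--Friedrichs inequality for functions with prescribed trace on $\Gamma_{\rm D}$, which has positive measure, then gives coercivity on $\mathcal A$. Hence a minimizing sequence is bounded in $H^1$, and a subsequence converges weakly in $H^1$ and strongly in $L^2$ (Rellich). The Dirichlet term is weakly lower semicontinuous. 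For the $\Lambda$ term, $0\le \Lambda(u)\le u$ for $u\ge 0$ and $\Lambda$ is $1$-Lipschitz on $[0,\infty)$, so $\int_{\Omega_+}\Lambda(u_k)\to\int_{\Omega_+}\Lambda(u)$. The limit therefore lies in $\mathcal A$ and is a minimizer. Uniqueness follows from strict convexity of $\mathcal E$. If $u_1\ne u_2$ were both minimizers, then strict convexity of $\Lambda$ on $\Omega_+$ together with convexity of the Dirichlet term gives $\mathcal E[\tfrac{u_1+u_2}{2}]<\min\mathcal E$ unless $\nabla(u_1-u_2)=0$ and $u_1=u_2$ on $\Omega_+$. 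In that remaining case $u_1-u_2$ is constant on the connected set $\Omega$, vanishes on $\Omega_+$, and so is zero.

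\textbf{(i)$\Leftrightarrow$(ii).} The constraint $u\ge0$ means we cannot take arbitrary variations $c+\epsilon\xi$, and this is the main obstacle. The plan is to remove it. Extend $\lambda$ to $\tilde\lambda(u)=\lambda(u^+)$ and $\Lambda$ to $\tilde\Lambda(u)=\Lambda(u^+)$, which is $C^1$ and convex on $\mathbb R$, and consider $\tilde{\mathcal E}$ on the affine space $c_0+\mathcal A_0$. Testing $\tilde{\mathcal E}$ with the truncation $u^+$ gives $\tilde{\mathcal E}[u^+]\le\tilde{\mathcal E}[u]$, using $c_0\ge0$ so that $u^+$ keeps the boundary data. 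Hence the minimizer of $\tilde{\mathcal E}$ can be taken nonnegative, and it coincides with the minimizer of $\mathcal E$ over $\mathcal A$. For the unconstrained convex $C^1$ functional, $\tilde\lambda$ is bounded, so differentiation under the integral is justified. Minimizing then gives the Euler--Lagrange identity \eqref{eq:weak} for all $\xi\in\mathcal A_0$. Conversely, if $c\in\mathcal A$ satisfies \eqref{eq:weak}, convexity gives $\mathcal E[u]\ge\mathcal E[c]+\langle \mathcal E'[c],u-c\rangle=\mathcal E[c]$ for every $u\in\mathcal A$, so $c$ is the minimizer. Uniqueness of weak solutions follows from uniqueness of the minimizer. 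Alternatively, subtract the weak forms for two solutions, test with their difference, and use monotonicity of $\lambda$.

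\textbf{(ii)$\Leftrightarrow$(iii) and positivity.} Testing \eqref{eq:weak} with $\xi\in C_c^\infty(\Omega_\pm)$ gives $\Delta c=\lambda(c)\chi_+$ distributionally in each subdomain, with right-hand side in $L^\infty$. Interior elliptic regularity plus bootstrapping, using smoothness of $\lambda$ on $(-1,\infty)$, yields $C^\infty(\Omega_\pm)$. $H^2$ up to the boundary of each $\Omega_\pm$ comes from regularity theory for transmission/mixed problems. This step requires the geometry (flat interface, cylindrical pieces) rather than mere Lipschitz regularity. We would cite standard results for piecewise-smooth interfaces, and here we mainly flag this as a technical step. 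With $H^2$ traces available, integration by parts on each $\Omega_\pm$ in \eqref{eq:weak} recovers the flux condition \eqref{cInterface} and the Neumann condition. Continuity of $c$ across $\Gamma$ is built into $c\in H^1(\Omega)$. The reverse direction is the same integration by parts read backwards, and uniqueness in (iii) follows from (ii).

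For positivity, $c\ge0$ and $c$ is harmonic in $\Omega_-$. Also $\Delta c=\lambda(c)\ge0$ is bounded in $\Omega_+$, so $\Delta c=\frac{1}{1+c}\,c$ with bounded nonnegative coefficient. The strong maximum principle in each subdomain then shows that either $c>0$ there or $c\equiv0$ there. Suppose $c\equiv0$ in $\Omega_-$. Then the trace of $c$ on $\Gamma_{\rm D}\subset\overline{\Omega}_-$ vanishes, contradicting $c_0>0$ on a set of positive measure. Suppose instead $c\equiv0$ in $\Omega_+$. Then $c=0$ on $\Gamma$ from the $\Omega_-$ side while $c>0$ inside $\Omega_-$, so $c$ attains its minimum on $\Gamma$. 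The Hopf lemma at smooth points of $\Gamma$ gives $d_0\partial_z c|_{\Omega_-}<0$ there, whereas $\partial_z c|_{\Omega_+}=0$, contradicting \eqref{cInterface}. Hence $c>0$ pointwise in both $\Omega_\pm$.
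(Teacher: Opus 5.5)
Your proposal is correct and follows the paper's own route. It uses the direct method with strict convexity to get the minimizer, extends $\Lambda$ to remove the sign constraint before taking first variations, integrates by parts on each $\Omega_\pm$ for (ii)$\Leftrightarrow$(iii), and applies the strong maximum principle for positivity. Your extension $\Lambda(u^+)$ is slightly cleaner than the paper's extension to $\{u>-1\}$, because it allows arbitrary variations in $\mathcal A_0$ without the paper's detour through bounded $C^1$ test functions and a density argument. Your caveat that $H^2(\Omega_\pm)$ regularity up to the boundary needs more than Lipschitz geometry is justified. So is your explicit Hopf-lemma argument excluding $c\equiv 0$ in $\Omega_+$. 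The paper covers both points only with a one-line appeal to elliptic regularity and the strong maximum principle.
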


\begin{proof} We divide our proof into three steps. 

\smallskip


{\it Step 1. Proof of the existence and uniqueness of a minimizer.}
It follows from the properties of the function $\Lambda: 
(-1, \infty) \to \R$ and 
Poincar{\'e}'s inequality  that there exist
constant $K_1 > 0$ and $K_2 \in \R$ such that 
\begin{equation}
    \label{boundE}
     \mathcal{E}[u] \ge K_1 \| u\|_{H^1(\Omega)}^2 + K_2 \qquad \forall u \in \mathcal{A}. 
\end{equation}
    Let $\beta = \inf_{\mathcal{A}} \mathcal{E} \in \R$ and $c_k\in \mathcal{A}$ $ (k=1,2,\dots)$ with 
    $\mathcal{E}[c_k] \to \beta.$ By \eqref{boundE} and the Sobolev compact embedding
    \cite{GilbargTrudinger98,Adams75,EvansBook2010},
    up to a subsequence that is not relabeled, 
    $c_k \rightharpoonup \hat{c}$ (weak convergence) in $H^1(\Omega)$, 
    $c_k \to \hat{c}$ in $L^2(\Omega)$, and $c_k \to \hat{c}$ a.e.\ in $\Omega$, for some 
    $\hat{c} \in H^1(\Omega).$ Clearly, $\hat{c} \in \mathcal{A}$. Now, for each $k \ge 1,$
    \begin{align*}
    &|\nabla c_k|^2 = |\nabla c_k - \nabla \hat{c}|^2 + 2 \nabla c_k \cdot \nabla \hat{c}   - |\nabla \hat{c}|^2 
    \ge 2 \nabla c_k \cdot \nabla \hat{c}  - |\nabla \hat{c}|^2, \\
    & |\Lambda (c_k) - \Lambda (\hat{c})|\le 
    |c_k - \hat{c}|.
    \end{align*}
    Consequently, 
$
\liminf_{k\to \infty} \mathcal{E}[c_k] \ge \mathcal{E}[\hat{c}]. 
$
Hence $\hat{c}$ is a minimizer of 
$\mathcal{E}: \mathcal{A} \to \R$. The uniqueness of such a minimizer follows from the fact
that the functional $\mathcal{E}$ is strictly convex. 




{\it Step 2. Proof of that statements {\rm (i), (ii),} and {\rm (iii)} are equivalent.} 
Let us denote 
\[
\tilde{\mathcal{A}} = \{ u \in H^1(\Omega): u > 
-1 \mbox{ a.e. } \Omega \mbox{ and } u= c_0 \mbox{ on }
\Gamma_{\rm D} \}
\]
and extend $\mathcal{E}$ to be a functional (using the same notation) $\mathcal{E}: \tilde{\mathcal{A}} \to 
\R \cup \{ + \infty \}.$
Clearly, $\mathcal{A} \subset \tilde{\mathcal{A}}.$ Moreover, for any $ u \in \tilde{\mathcal{A}}$, 
$u^+ = \max(u, 0) \in \mathcal{A}$, and $\mathcal{E}[u] \ge \mathcal{E}[u^+]$, since 
$\Lambda (u) \ge \Lambda (u^+)$ in $\Omega. $ Therefore, 
the minimizer $\hat{c} \in \mathcal{A}$ of $\mathcal{E}$ over $\mathcal{A}$ is also a minimizer of
$\mathcal{E}$ over $\tilde{\mathcal{A}}.$


(i) $\Longrightarrow$ (ii).  Assume $c = \hat{c}$ is the minimizer of $\mathcal{E}$ over $\mathcal{A}.$ 
Let $\xi \in C^1(\overline{\Omega}) \cap \mathcal{A}_{0}.$
Then, $ \hat{c} + t \xi \in \tilde{\mathcal{A}}$ for any $t \in \R$ with $|t| \ll 1.$
Since $\hat{c}\in \mathcal{A}$ minimizes $\mathcal{E}$ over $\mathcal{A}$ and hence over 
$\tilde{\mathcal{A}},$ we have 
$\sigma'(0) = 0$, where $\sigma(t) = \mathcal{E}[c+t \xi]$ $(|t| \ll 1).$ Note that 
\[
\left| \ln \left(1+\hat{c}+ t \xi  \right) \right| 
\le \hat{c} + | \xi | \quad \mbox{a.e.\ in } \Omega \quad \mbox{if }  | t | \ll 1
\]
and that $\Lambda'(u) = \lambda (u)$ if $u > -1$.
Consequently, by direct calculations using
the Lebesgue Dominated Convergence Theorem, we obtain the equation \eqref{eq:weak}
with $\hat{c}$ replacing $c$ and $\xi\in C^1(\overline{\Omega}) \cap \mathcal{A}_{0}.$
Since $C^1(\overline{\Omega})\cap \mathcal{A}_{0}$  is dense 
in $\mathcal{A}_{0}$ with respect to the $H^1(\Omega)$-norm 
and $|\lambda (\hat{c})|\le 1$
in $\Omega$, we obtain
\eqref{eq:weak} with $\hat{c}$ replacing $c$. Hence (ii) is true. 


(ii) $\Longrightarrow$ (i). Assume $c \in \mathcal{A}$ satisfies \eqref{eq:weak}. Since the minimizer 
$\hat{c}$ of $\mathcal{E}: \mathcal{A} \to \R$ also satisfies \eqref{eq:weak} with $\hat{c}$
replacing $c$, we have that $u := c - \hat{c} \in \mathcal{A}_{0}$. Setting 
the test function $\xi = u\in \mathcal{A}_{0}$ in the weak formulation \eqref{eq:weak} for both $c$ and $\hat{c}$ we obtain that 
\[
\int_\Omega \left[ D_\Gamma |\nabla u|^2 + \chi_+ 
( \lambda(c) - \lambda (\hat{c}) ) u \right] \text{d}\boldsymbol{r} = 0.
\]
Since 
\[
\lambda (c) - \lambda (\hat{c}) = \frac{u}{(c+1) (\hat{c}+1)}
\]
and both $c$ and $\hat{c}$ are nonnegative in $\Omega$, 
we infer $\nabla u = 0$ a.e.\ in $\Omega$. 
Hence $ u= 0$ and $c = \hat{c}.$ Thus, (i) is true. 



(ii) $\Longrightarrow$ (iii). Suppose $c \in \mathcal{A}$ satisfies \eqref{eq:weak}. By considering 
test functions $\xi \in \mathcal{A}_{0}$ that are supported in $\Omega_+$, 
we see that \eqref{D+Dc} holds true in the weak sense. 
By the bootstrapping argument and the elliptic regularity theory \cite{GilbargTrudinger98,EvansBook2010}, 
we see that $c|_{\Omega_+}
\in H^2(\Omega_+)\cap C^\infty(\Omega_+)$ and \eqref{D+Dc} holds true point wise. Similarly, $c|_{ \Omega_-}
\in H^2(\Omega_-)\cap C^\infty(\Omega_-)$ and \eqref{D-Dc} holds true. 

Consider any $\xi \in  \mathcal{A}_{0}$. Choose the unit normal $\boldsymbol{n}$ on the interface  $\Gamma$ such that it points 
from $\Omega_-$ to $\Omega_+$, and it is outward for $\Gamma_{\text{D}}$ and $\Gamma_{\text{N}}$. The corresponding normal derivative is denoted by $\partial_{n}$.
By \eqref{eq:weak} we have 
\begin{eqnarray}
    0 & =& \int_{\Omega_+}  \nabla c \cdot \nabla \xi \, \text{d}\boldsymbol{r} + 
    \int_{\Omega_-} d_0 \nabla c \cdot \nabla \xi \, \text{d}\boldsymbol{r}+ \int_{\Omega_+} 
    \lambda(c) \xi \, \text{d}\boldsymbol{r}\nonumber \\
    &=& - \int_{\Omega_+} \left[ \Delta c - 
    \lambda(c) \right] \, \xi \, \text{d}\boldsymbol{r} 
    - \int_{\Omega_-} d_0 \Delta c \, \xi \, \text{d}\boldsymbol{r} 
    -\int_\Gamma \left(\partial_n c|_{\Omega_+}- d_0 \partial_n c|_{\Omega_-} \right) \xi \, \text{d}S
    \nonumber
  \\
    &&  
    + \int_{\Gamma_{\rm N}\cap \partial \Omega_+ } \partial_n c|_{\Omega_+} \xi\, \text{d}S 
    + \int_{\Gamma_{\rm N} \cap \partial \Omega_-} d_0 \partial_n c|_{\Omega_-} \xi\, \text{d}S. \label{variation-weak-solution}
\end{eqnarray}
 Note that we used the fact that $\partial_n c|_{\Omega_\pm} \in L^2(\partial \Omega_\pm)$, since 
$D_\Gamma \nabla c \in L^2(\Omega)$ and $c \in H^1(\Omega)$ \cite{Temam84}.
In view of \eqref{D+Dc} and \eqref{D-Dc}, the first two terms 
on the right-hand side of \eqref{variation-weak-solution} vanish. 
Next, by choosing $\xi \in C_0^1(\Omega) \subset \mathcal{A}_{0}$, we obtain
\[
\int_\Gamma \left( \partial_n c|_{\Omega_+} - d_0 \partial_n c|_{\Omega_-} \right) \xi \, \text{d}S = 0.
\]
This implies \eqref{cInterface}, since $\xi$ is arbitrary. Consequently, 
\[
    \int_{\Gamma_{\rm N}\cap \partial \Omega_+ } \partial_n c|_{\Omega_+} \, \xi\, \text{d}S 
    + \int_{\Gamma_{\rm N} \cap \partial \Omega_-} d_0 \partial_n c|_{\Omega_-} \, \xi \, \text{d}S = 0
    \qquad \forall \xi \in \mathcal{A}_{0}. 
\]
This leads to $\partial_n c = 0$ on $\Gamma_{\rm N}.$ Thus, (iii) holds true. 


(iii) $\Longrightarrow$ (ii). This follows  directly from \eqref{variation-weak-solution} and equations in 
\eqref{eq:elliptic-interface-problem}.

{\it Step 3. The positivity of the minimizer.} This follows from (iii), the fact that $c_0 > 0$, 
and the strong maximum principle
\cite{GilbargTrudinger98}.
\end{proof}

\section{A reduced model for vertical expansion}
\label{s:1D}

\subsection{Model formulation}
\label{ss:1Dmodel}

To elucidate the dynamics of vertical growth, we introduce a simplified version of the original 
three-dimensional model. We assume that the system
region at time $t$ is the interval $(-a, h(t))$ along the $z$-axis, 
where $a > 0$, a given constant, is the agar thickness 
and $h(t)$ is the colony height; cf.\ Figure~\ref{fig:0} (a). 
The colony region, agar region, and agar-colony interface are 
$ \Omega_+(t) = (0, h(t))$, $\Omega_- = (-a, 0)$, and $  \Gamma = \{ 0 \}, $ respectively. 
The reaction and diffusion of nutrient is much faster than the increasing of the colony height $h(t)$.  
Hence, we consider quasi-stationary dynamics neglecting the terms
$\partial_tp$ and $\partial_tc$ in equations for the 
pressure $p$ and nutrient concentration $c$, respectively.
Note that the functions $c$ and $p$ still depend on time $t$ due to the evolving colony height $h(t).$
Taking into account all these assumptions along with repeating model derivation arguments from Section~\ref{ss:3Dmodel}, we obtain the following one-dimensional system
of equations for the colony height $h(t)$, the nutrient concentration $ c = c(z, t)$,
and the pressure $p = p(z, t)$, and the corresponding boundary and initial conditions: 
\begin{subequations}\label{1dmodel-with-dimensions}
\begin{empheq}[left=\empheqlbrace]{align}
\label{QuasiS1D+}
& D_+ \partial_{zz}  c  =  \frac{\rho \lambda_B}{Y} \, \dfrac{c}{c+K}  & & 
\mbox{ for } z \in (0, h(t)) \ \mbox{ and } \  t > 0,  \\
\label{QuasiS1D-}
&D_- \partial_{zz} c = 0 & &  \mbox{ for } z \in (-a, 0) \ \mbox{ and } \ t > 0,  \\
\label{QuasiS1Djump}
& \quad D_- \partial_z c(0^-, t) = D_+ \partial_z c(0^+, t) & & \mbox{ for } t > 0, \\
\label{1DBC}
    & \quad c(-a, t) = c_0 \quad \mbox{and} \quad  \partial_z c(h(t), t) = 0 & &
    \mbox{ for } t > 0, \\
& \partial_{zz}p = -\dfrac{\lambda_B}{\xi}\,\dfrac{c}{c+K} && 
\mbox{ for } z \in (0, h(t))  \mbox{ and } \  t > 0. \label{1DPDE-for-p} \\
& \quad \partial_z p(0, t) = 0 \quad \mbox{and} \quad  p(h(t), t) = 0 & & 
\mbox{ for } t > 0, \label{1DBC-p}\\ 
& h'(t) = -\xi \partial_z p(h(t),t) & & \mbox{ for } t>0, 
\label{hprimelaw}
\\
& \quad h(0) = h_0, & & \label{1Dh0}
\end{empheq}
\end{subequations}
where $h_0 > 0$ is a given initial height. 
Here, all the parameters $D_{\pm}$, $\rho$, $Y$, $\lambda_B$, $K$, $\xi$, and $a$ have the same meaning as for the three-dimensional model described in Section~\ref{ss:3Dmodel}. We note that a related simpler model was previously introduced in \cite[Appendix 2.3]{Warren_eLife2019} to study the properties of the nutrient concentration profile in the formal large-time limit.


We introduce the dimensionless variables and parameters
\[
\hat{z}=\dfrac{z}{\mathcal{Z}}, \quad \hat{a}=\dfrac{a}{\mathcal{Z}}, \quad 
\hat{h}=\dfrac{h}{\mathcal{Z}}, \quad \hat{h}_0 = \frac{h_0}{\mathcal Z}, 
\quad \hat{t}=\dfrac{t}{\mathcal{T}}, \quad \hat{c}=\dfrac{c}{K}, \quad \hat{p}=\dfrac{p}{\mathcal{P}}, 
\quad d_0 = \frac{D_-}{D_+},
\]
where the characteristic scales are defined, consistent with scales in Subsection~\ref{ss:3Dmodel}, as 
\[
\mathcal{Z}=\sqrt{\dfrac{KD_+}{\alpha \lambda_B}}, 
\qquad \mathcal{P} = \dfrac{\lambda_B\mathcal{Z}^2}{\xi}, \qquad \mathcal{T} = \dfrac{1}{\lambda_B}.
\]
Similar to the non-dimensionalization in Subsection~\ref{ss:3Dmodel}, we obtain by dropping hats the dimensionless
system 
\begin{subequations}\label{1dmodel-intermediate-nondim}
\begin{empheq}[left=\empheqlbrace]{align}
\label{Interm_c+-nondim}
& \partial_{zz}  c  =  \dfrac{c}{c+1}  & & 
\mbox{for } z \in (0, h(t)) \ \mbox{ and } \  t > 0,  \\
\label{Interm_c--nondim}
& \partial_{zz} c = 0 & &   \mbox{for } z \in (-a, 0) \ \mbox{ and } \ t > 0,  \\
\label{Interm_c_Interface-nondim}
& \qquad d_0\partial_{z} c(0^-, t) = \partial_{z} c(0^+, t) & &  \mbox{for } t > 0, \\
\label{Interm_c_BC-nondim}
    &\qquad  c(-a, t) = c_0 \quad \mbox{and} \quad  \partial_{z} c(h(t), t) = 0 & &  \mbox{for } t > 0, \\
& \partial_{zz}p = -\dfrac{c}{c+1} &&  \mbox{for } z \in (0, h(t)) \ \mbox{ and } \  t > 0, \label{Interm-p-nondim} 
\\ 
 & \qquad \partial_{z}p(0, t) = 0 \quad \mbox{and} \quad  p(h(t), t) = 0 &&  \mbox{for } t > 0, \label{Interm-p-BC-nondim}\\
& h'(t) = - \partial_{z} p(h(t),t) & &  \mbox{for } t>0, \label{Interm_h-nondim}\\
& \qquad h(0) = h_0. & & \label{Interm_h0-nondim}
\end{empheq}
\end{subequations}


It follows from \eqref{Interm_h-nondim}, \eqref{Interm-p-BC-nondim}, \eqref{Interm-p-nondim}, and \eqref{Interm_c+-nondim} that
\begin{align}
\partial_t h &= - \partial_zp(h(t),t)= - \int_0^{h(t)}\partial_{zz}p(z,t)\,\text{d}z \nonumber 
\\
&= \int_0^{h(t)} \frac{c}{c+1} \, dz 
=\int_0^{h(t)}\partial_{zz}c(z,t) \,\text{d}z = - \partial_{z}c(0^+,t). \label{simplification-h}
\end{align}
By \eqref{Interm_c--nondim} and  \eqref{Interm_c_BC-nondim}, we have 
\begin{equation}
\label{cztagar}
c(z,t) = \dfrac{1}{a} (c(0,t)- c_0)z + c(0,t) \quad \mbox{for } -a<z<0 \ \mbox{and} \ t > 0
\end{equation}
Denoting $\kappa = d_0/a$, we can then rewrite the interface condition 
\eqref{Interm_c_Interface-nondim} as
\begin{equation}
\partial_zc(0^+,t) = \kappa (c(0,t)-c_0) \quad \mbox{for } t > 0.
\label{simplification-c}
\end{equation}
We can now eliminate the pressure variable and reduce the dimensionless system above into the following: 
\begin{subequations}\label{vfbp-nondim-expanded}
\begin{empheq}[left=\empheqlbrace]{align}
\label{1dsemifinal-c-expanded}&\partial_{zz}c=\dfrac{c}{c+1}& & \ \mbox{for } z\in (0,h(t)) \mbox{ and } t>0,\\
\label{1dsemifinal-c---expanded}& \partial_{zz}c = 0 & & \ \mbox{for } z\in(-a,0) \mbox{ and } t>0,\\
\label{1dsemifinal-BC_at_0-expanded}&\qquad \partial_z c(0^+,t) = d_0\partial_zc(0^-,t) & & \ \mbox{for } t>0, \\
\label{1dsemifinal-BC_at_a-expanded} & \qquad c(-a,t)=c_0 \quad \mbox{and} \quad \partial_z c(h(t),t)=0 & &  \ \mbox{for } t > 0, \\
\label{1dsemifinal-h-expanded}& h' (t) = - \kappa (c(0,t)-c_0) & &\ \mbox{for } t>0, \\
\label{1dsemifinal-h0} & \qquad h(0) = h_0. & & 
\end{empheq}
\end{subequations}
By \eqref{cztagar} and \eqref{simplification-c}, this system is equivalent to the following system for
the concentration $c = c(z,t)$ only for $z \in (0, h(t))$: 
\begin{subequations}\label{vfbp-nondim}
\begin{empheq}[left=\empheqlbrace]{align}
\label{1dfinal-c}&\partial_{zz}c=\dfrac{c}{c+1}& & \ \mbox{for } z\in (0,h(t))\mbox{ and } t>0,\\
\label{1dfinal-BC_at_0}&\qquad \partial_zc(0^+,t) = \kappa (c(0,t)-c_0),& &\ \mbox{for } t>0, \\
\label{1dfinal-BC-at-h}&\qquad \partial_zc(h(t),t)=0 & & \ \mbox{for } t>0,\\
\label{1dfinal-h}& h' (t)= - \kappa (c(0,t)-c_0) & & \ \mbox{for } t>0, \\
\label{1dfinal-h0h0}
& \qquad h(0) = h_0. &  & 
\end{empheq}
\end{subequations}

\subsection{Numerical simulations and perturbation analysis}
\label{ss:1Dsimulation}

We have numerically solved the problem \eqref{vfbp-nondim-expanded}, with 
$a = 1$, $c_0 = 1.5$ and $d_0 = 5$,
and our results are depicted in Figure~\ref{fig:vertical}. We observe
that the concentration profile $c(z, t)$ for a fixed $t$ 
is monotonically decreasing in $z$ and strictly positive for all times and the
bacterial colony height $h(t)$ eventually grows linearly in time $t$. Moreover,  
the value of the concentration profile at $z=0$ converges to a constant $U_0$ as $t\to \infty;$  see 
the insets in Figure~\ref{fig:vertical}. The meaning of this constant $U_0$
is shown in our
asymptotic analysis below; cf.\ \eqref{scalar_eqn_U_0}. 

\begin{figure}[h]
\begin{center}
\includegraphics[width=0.90\textwidth]{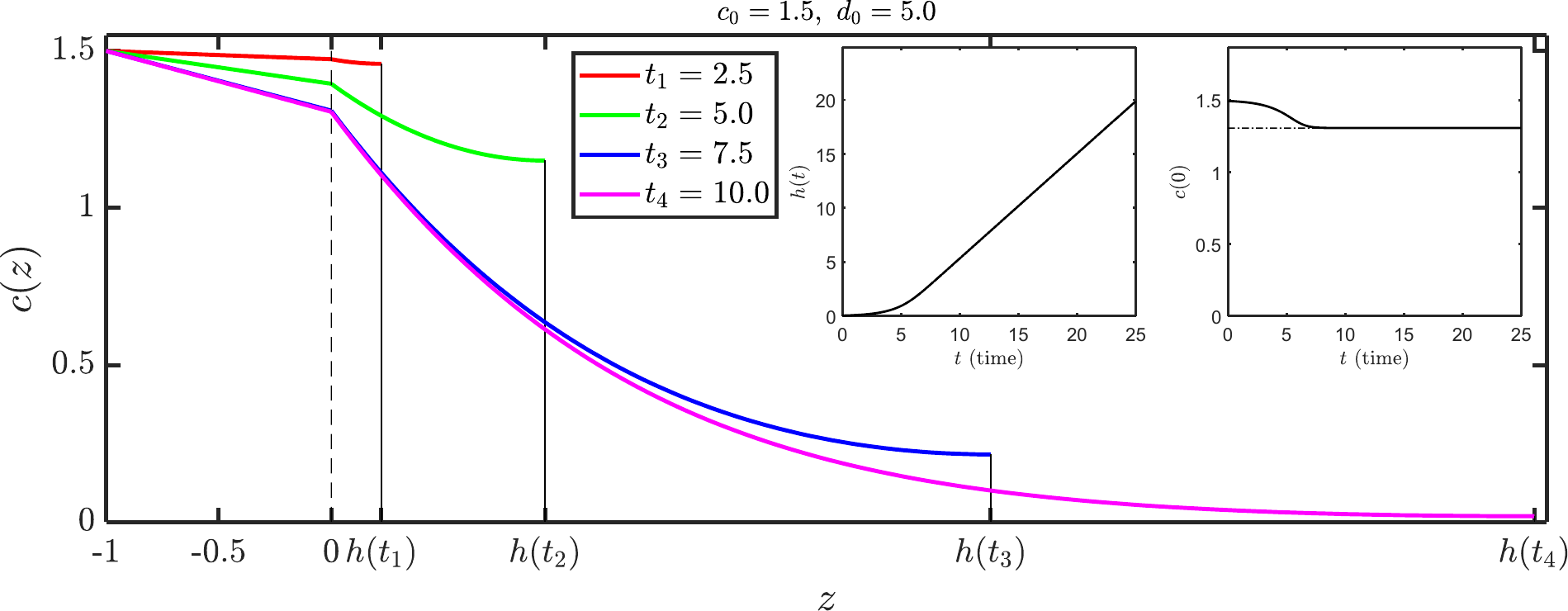}
\end{center}
\vspace{-4 mm}

\caption{Numerical simulation of the system \eqref{vfbp-nondim-expanded}.
The colony height $h  = h(t)$ and the profiles of the nutrient concentration $c = c(z, t)$ $(-a \le z \le h(t))$ are presented for several values of $t = t_i $ $(i=1, 2,3, 4)$. 
Insets: (Left) The bacterial colony height $h = h(t)$ 
as a function of time; (Right) The value of the nutrient concentration at $z=0$ vs.\ time.
The horizontal dashed line is given by $c=U_0$ where $U_0$ solves~\eqref{scalar_eqn_U_0}. }
\label{fig:vertical}
\end{figure}

As we will show below in Proposition~\ref{p:depletion}, the concentration 
$c = c(z, t)$ for fixed $t > 0$ decays quadratically for $0 \le z \le z_\ast(t)$ and 
decays exponentially for $z > z_\ast(t),$ where $z_\ast(t) $ is defined by $c(z_\ast(t), t) = 1$
which indicates the concentration reaches the Monod constant $K$ in the dimensional form. 
In Figure~\ref{fig:2vertical}, results of numerical simulations for $z_*(t)$ are presented. The definition of $z_*=z_\ast(t)$ is illustrated in Figure~\ref{fig:2vertical} (a). 
From Figure~\ref{fig:2vertical} (b), we observe that $z_\ast = z_*(t)$ converges exponentially 
to a limiting value $z_*^\infty$ as time evolves. This is proved below in Proposition~\ref{p:exponential} and 
Theorem~\ref{t:1Ddynamics}.

\begin{figure}[h]
\begin{center}
\includegraphics[width=0.85\textwidth]{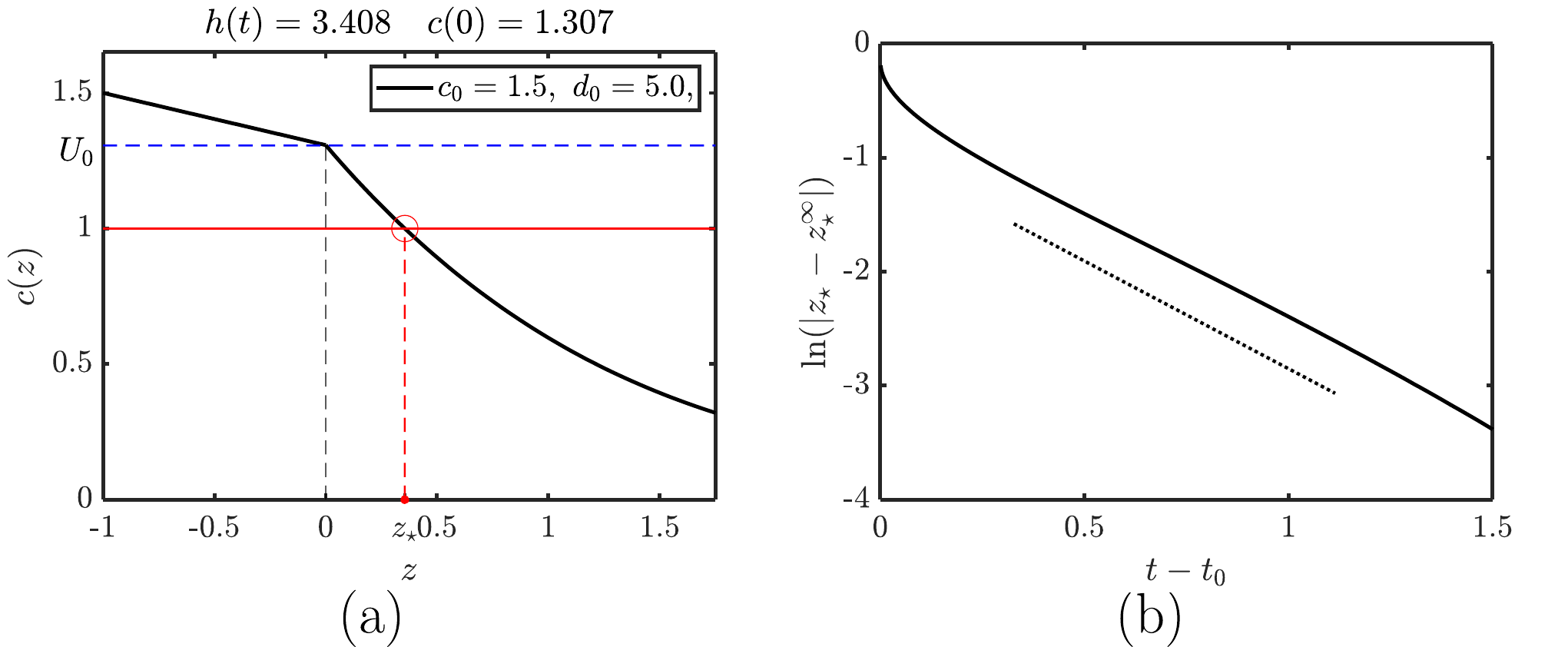}
\end{center}
\vspace{-4 mm}
\caption{
Numerical simulation of the system \eqref{vfbp-nondim-expanded}: the asymptotic dynamics 
of $z_{*}=z_{*}(t)$ defined by  $c(z_*, t)=1$ for the case  
$U_0>1$. Here, $U_0$ is the limiting value of $c(0,t)$ as $ t \to \infty$; cf.
\eqref{scalar_eqn_U_0}.  
 (a) The profile $c(z,t)$ at $t=10$. Note that $c(0,t)\approx U_0$ for $t=10$. (b) The slope of $\ln|z_{*}-z_{*}^{\infty}|$ is linear, where $z_\ast = z_\ast(t)$
 and $z_{*}^{\infty}={\lim}_{t\to\infty}z_{*}(t)$. 
 Note that $t_0$ denotes the first time when $c(z_*,t)=1$ has a solution, so $z_*(t)$ is defined uniquely  for $t\geq t_0$
 due to the monotonicity of $c$ with respect to $t.$
}
\label{fig:2vertical}
\end{figure}

We now consider the solution $c_h = c_h(z)$ to the time-independent version
of the system 
\eqref{1dfinal-c}--\eqref{1dfinal-BC-at-h}, with $h $ and $c_h(z)$ replacing
$h(t)$ and $c(z,t)$, respectively, and perform an asymptotic analysis \cite{holmes2012introduction}
of $c_h$ in the limit \(h\to\infty\). 
We rescale the spatial variable 
\(
\hat z={z}/{h} \in [0, 1]
\)
and define $\hat{c}_h (\hat{z}) = c_h(z).$ 
Introducing the small parameter
\(
\varepsilon:=h^{-1}\ll 1,
\)
we rewrite the function $\hat{c} = \hat{c}_h$ which now depends implicitly on 
$\varepsilon$.  It is the unique solution of
the following boundary-value problem on the fixed interval $(0, 1)$: 
\begin{subequations}\label{vfbp-nondim-rescaled}
\begin{empheq}[left=\empheqlbrace]{align}
    &  \varepsilon^2\hat{c}'' =  \dfrac{\hat{c}}{\hat{c}+1} \qquad  \forall \hat{z} \in (0, 1), & & \label{vfbp-nondim-rescaled-a} \\
  & \quad \varepsilon \hat{c}'(0) =\kappa(\hat{c}'(0) - c_0),  &&  \\
 & \quad \hat{c}'(1)  = 0.&&
  \end{empheq}
  \end{subequations}

We seek an asymptotic
representation of \(\hat{c}\) as the sum of an outer solution $\hat{c}_{\text{out}}$, which is regular in both \(\hat z\) and \(\varepsilon\), and an inner
(boundary-layer) solution \(\hat{c}_{\text{in}}\), which  depends on the stretched variable $\hat{z}/\varepsilon$: 
\begin{equation}
    \label{hatcexp}
 \hat{c}(\hat{z}) \approx \hat{c}_{\text{out}}(\hat{z},\varepsilon) \quad 
 \text{ for }\hat{z}=O(1) \qquad \text{ and } \qquad \hat{c}(\hat{z}) \approx \hat{c}_{\text{in}}\left(\dfrac{\hat{z}}{\varepsilon},\varepsilon\right)\quad \text{ for } \hat{z}=O(\varepsilon),
\end{equation}
where 
\begin{equation}
\hat{c}_{\text{out}}(\hat{z}, \varepsilon) =\sum\limits_{n=0}^{\infty}\varepsilon^n \hat{c}_{\text{out},n}(\hat{z})
\qquad \text{ and }\qquad \hat{c}_{\text{in}}\left(\dfrac{\hat{z}}{\varepsilon},\varepsilon\right)=\sum\limits_{n=0}^{\infty}\varepsilon^n\hat{c}_{\text{in},n}\left(\dfrac{\hat{z}}{\varepsilon}\right), \label{s3-series}
\end{equation}
Note that the stretched variable $\hat{z}/\varepsilon$ coincides with the original variable $z$.
Here, to understand the limiting behavior of the profile of $\hat{c}$ as $\varepsilon\to 0$, we are only interested in the first terms of these two series above, with the matching condition 
\begin{equation}
\label{matchcondition}
\lim\limits_{z\to\infty}\hat{c}_{\text{in},0}(z)=\lim\limits_{\hat{z}\to 0}\hat{c}_{\text{out},0}(\hat{z}).
\end{equation}

We substitute  expressions \eqref{s3-series} into \eqref{vfbp-nondim-rescaled} and the matching condition \eqref{matchcondition} to obtain that  $\hat{c}_{{\rm in}, n}\equiv 0$ for all $n\geq 1$ and $\hat{c}_{{\rm out}, n}\equiv 0$ for all $n \ge 0$.   
The only non-zero term in \eqref{s3-series}, which is $U(z):=\hat{c}_{\text{in},0}\left({\hat{z}}/{\varepsilon}\right)$, satisfies the following boundary-value problem
\begin{subequations}\label{vfbp-boundary-layer}
\begin{empheq}[left=\empheqlbrace]{align}
    & U'' = \dfrac{U}{U+1} \qquad \forall z>0, \label{eqnU}\\
  &  \qquad U'(0)=\kappa(U(0)-c_0), \label{bcU0}\\
 &\qquad U \to 0\text{ as }z\to\infty.\label{bcInf}
  \end{empheq}
\end{subequations}
Here, the condition at $z\to \infty$ is due to the matching condition \eqref{matchcondition}. Note that the fact that there is only one term, $U(\hat{z}/\varepsilon)$, in the multiscale asymptotic expansion \eqref{hatcexp}--\eqref{s3-series}, does not imply that $\hat{c}(\hat{z})=U(\hat{z}/\varepsilon)$, but rather $\hat{c}(\hat{z})=U(\hat{z}/\varepsilon)+O(\varepsilon^n)$ for any $n>0$ (for example, we can have $\hat{c}(\hat{z})=U(\hat{z}/\varepsilon)+O(e^{-1/\varepsilon})$).

Eq.~\eqref{eqnU} has the first integral
$ (U')^2 / 2- \Lambda(U)= \text{const}.$ 
This and \eqref{bcInf} imply 
  \begin{equation}
  \label{UpU0}
   U' = -\sqrt{2\Lambda(U)} \quad \forall z > 0 \qquad \mbox{and} \qquad 
  U(0)=U_0 > 0,
  \end{equation}
 where $U_0$ is to be determined. 
 Since $\Lambda (U) \sim U$ for $U \sim 0$, the zero function is a stable state for the 
 equation in \eqref{UpU0} and the
 initial-value problem \eqref{UpU0} has a unique 
 solution. 
Using $\partial_z U = -\sqrt{2\Lambda(U)}$, 
the boundary condition \eqref{bcU0} and the equation in \eqref{UpU0}, we obtain
  \begin{equation}
 \sqrt{2\Lambda(U_0)}=\kappa (c_0 - U_0).\label{scalar_eqn_U_0}
  \end{equation}
  Here, the left-hand side is strictly monotonically increasing in $U_0$, vanishes at $U_0=0$, and grows to infinity as $U_0\to \infty$. Therefore, there exists a unique $U_0$ solving \eqref{scalar_eqn_U_0}, which in turn implies 
  the unique solvability of the system \eqref{vfbp-boundary-layer}.

In conclusion of the perturbation analysis, we have $c_h(z) \sim U(z)$ for large $h$ and $c_h(0) \to U_0$ for $h\to\infty$, where $U_0>0$ solves \eqref{scalar_eqn_U_0}. Due to \eqref{1dfinal-BC-at-h}, it implies that $h(t)$ grows linearly as $t\to \infty$. Convergence of the concentration profile $c_h(z)$ to a constant-in-time profile $U(z)$ and the linear growth of $h(t)$ for $t\to\infty$ is in the agreement with the numerical observations illustrated by Figure~\ref{fig:vertical}. These conclusions, along with other statements, will be made rigorous in the subsections below.

\subsection{Profile of nutrient concentration}
\label{ss:1Dsteadystate}

In this subsection, we study the following boundary-value problem for the equilibrium concentration $c = c(z)$, 
given $a>0$, $h>0$, and $c_0 > 0$: 
\begin{subequations}\label{ss-vfbp-nondim-expanded}
\begin{empheq}[left=\empheqlbrace]{align}
\label{ss-1dfinal-c---expanded}& 
c'' = 0 & & \ \mbox{for } z\in(-a,0), & \\
\label{ss-1dfinal-c-expanded}&c''=\lambda(c)& & \ \mbox{for } z\in (0,h), & \\
\label{ss-1dfinal-BC_at_0-expanded}& \qquad c'(0^+) = d_0c'(0^-), & &  & \\
\label{ss-1dfinal-BC_at_ah-expanded} &\qquad  c(-a)=c_0 \quad \mbox{and} \quad c'(h) = 0. & & & 
\end{empheq}
\end{subequations}
Here, we show that this problem has a unique solution $c \in H^1(-a, h)$, 
 characterize the solution profile $c = c(z)$, compare such profiles for different values of $a$ and $h$, 
show that the concentration depletes into the colony region, and obtain the large $h$ asymptotic behaviors of 
the concentration.


\begin{proposition}
    \label{p:1Dc}
    Given $a > 0$, $h > 0$, and $c_0 > 0.$  The boundary-value problem 
   \eqref{ss-vfbp-nondim-expanded} admits a unique, nonnegative, and continuous solution $c = c (z)$ $(-a \le z \le h)$. 
   Moreover, $c(z)$ satisfies the following:
 \begin{compactenum}
    \item[{\rm (i)}]
        $c$ is linear in $[-a, 0]$, given by 
        \[
        c(z,t) = \frac{(c(0)- c_0)}{a}z + c(0) \qquad  (-a\le z \le 0);
        \]
       \item[{\rm (ii)}]
       $c \in C^\infty ([0, h])$, and 
       $c$ is strictly decreasing and $c'$ is strictly increasing in $[0, h]${\rm ;}
        \item[{\rm (iii)}]
        $0 < c(h) < c(0) < c_0$. 
\end{compactenum}
\end{proposition}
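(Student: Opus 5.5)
Existence and uniqueness come from the one-dimensional version of Theorem~\ref{t:3Dminimizer}, which the paper notes holds in lower dimensions. Take $\Omega=(-a,h)$, $\Omega_-=(-a,0)$, $\Omega_+=(0,h)$, $\Gamma=\{0\}$, $\Gamma_{\rm D}=\{-a\}$ and $\Gamma_{\rm N}=\{h\}$. The theorem then gives a unique minimizer $c\in\mathcal A$ of $\mathcal E$. This $c$ is nonnegative, it is the unique weak solution, and it solves the interface problem \eqref{ss-vfbp-nondim-expanded}. Since $H^1$ functions in one dimension are continuous, $c$ is continuous on $[-a,h]$, and the theorem also gives $c>0$ on $(-a,0)\cup(0,h)$. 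Uniqueness among nonnegative continuous solutions follows from the equivalence (ii)$\Leftrightarrow$(iii) of the theorem.

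Item (i) is immediate. Since $c''=0$ on $(-a,0)$, $c$ is affine there, and its endpoint values are $c(-a)=c_0$ and $c(0)$. This gives the formula, and hence $c'(0^-)=(c(0)-c_0)/a$.

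For (ii), interior regularity in the theorem already gives $c\in C^\infty(0,h)$. The right-hand side $\lambda(c)$ is smooth in $c$ and $c$ is bounded and continuous up to the endpoints, so the ODE $c''=\lambda(c)$ yields $c\in C^\infty([0,h])$ by bootstrapping. Positivity of $c$ on $(0,h)$ gives $c''=\lambda(c)>0$ there, so $c'$ is strictly increasing on $[0,h]$. Combined with $c'(h)=0$ this gives $c'<0$ on $[0,h)$, so $c$ is strictly decreasing on $[0,h]$.

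For (iii), monotonicity gives $c(h)<c(0)$. To show $c(h)>0$, suppose instead $c(h)=0$. Together with $c'(h)=0$, uniqueness for the initial-value problem of the Lipschitz ODE $c''=\lambda(c)$ (Lipschitz on $c>-1$) forces $c\equiv0$ on $[0,h]$. The interface condition then gives $c'(0^-)=0$, so $c$ is constant on $[-a,0]$ and equals $c(0)=0$, which contradicts $c(-a)=c_0>0$. For $c(0)<c_0$, note that $c'(0^+)<0$ by (ii). The interface condition $c'(0^+)=d_0c'(0^-)=d_0(c(0)-c_0)/a$ then gives $c(0)<c_0$. The only delicate point is endpoint regularity and using the ODE at $z=h$. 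These are handled by the one-dimensional setting, in which $c'$ is continuous on $[0,h]$ because $c''$ is bounded.
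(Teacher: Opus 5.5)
Your proof is correct and follows essentially the same route as the paper. Existence and uniqueness come from the direct method of Theorem~\ref{t:3Dminimizer}, linearity holds on $[-a,0]$, monotonicity follows from convexity together with $c'(h)=0$, and uniqueness for the initial-value problem at $z=h$ rules out $c(h)=0$. Your explicit use of the flux condition $c'(0^+)=d_0c'(0^-)$ to obtain $c(0)<c_0$, which the paper leaves implicit, is a welcome addition.
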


Figure~\ref{f:1Dsolution} shows a 
typical profile of the solution to \eqref{ss-vfbp-nondim-expanded} 
(see also numerical solutions in Figure~\ref{fig:vertical} (a)).

\begin{figure}[hptb]
\begin{center}
\includegraphics[width=0.6\textwidth]{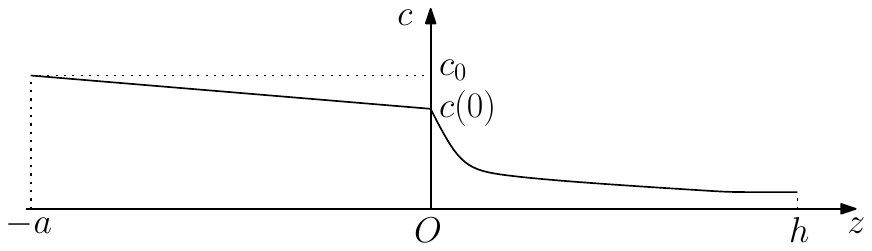}
\end{center}

\vspace{-4 mm}

\caption{Sketch of the solution profile $c = c(z)$ to the boundary-value problem \eqref{ss-vfbp-nondim-expanded}. 
}
\label{f:1Dsolution}
\end{figure}

\begin{proof}[Proof of Proposition~\ref{p:1Dc}] The existence and uniqueness of 
a nonnegative and continuous solution can be obtained by the direct method; cf.\ Theorem~\ref{t:3Dminimizer}.

Part (i) follows  from \eqref{ss-1dfinal-c---expanded}
and \eqref{ss-1dfinal-BC_at_ah-expanded}.

{Part (ii).} The regularity $c|_{[0, h]} \in C^\infty([0, h])$ follows from a usual bootstraping argument. 
Since $c(z) > 0$ for $z \in (0, h)$, we have by \eqref{ss-1dfinal-c-expanded} that 
\[
c'(z) = - \int_z^h \Lambda (c(\zeta))\,\text{d}\zeta < 0 \qquad \mbox{if }0<z<h. 
\]
(The function $\lambda$ and $\Lambda$ are defined in
\eqref{nondim-def-lambda} and \eqref{Fc}, respectively.)
Hence, $c$ is strictly decreasing. This leads to  $c(z)> c(h) \ge  0$ for all $z \in [0,h]$, 
and further to $c''(z) = \lambda(c(z)) > 0$ for all $z \in [0, h]. $ Hence, $c'$ is strictly increasing on $[0, h].$

 Part (iii). 
Suppose $c(h)=0$. Then, $c = c(z)$ is the unique nonnegative solution to 
the initial-value problem of a second-order ordinary differential 
equation (ODE): $c''= \lambda(c)$ in $(0, h)$ and $c(h) = c'(h) = 0$. 
Since the initial-value has a unique solution and $\lambda(0)=0$, 
we have that $c(z)=0$ for all $z\in(0,h)$, contradicting the fact that $c > 0$ in $(0, h).$
The other inequalities follow since $c$ decreases strictly.
\end{proof}

We now study  the steady-state concentration $c = c(z)$ $(-a < z < h)$ as we vary
the agar thickness $a$ and the colony height $h.$ We denote 
the solution to 
the system of equations \eqref{ss-vfbp-nondim-expanded} by $c = c_{a, h}(z)$ $(-a < z < h)$  to emphasize the dependence on the agar 
thickness $a$ and colony height $h$. The statement of the next 
proposition is illustrated by Figure~\ref{f:ah}.
See also Figure~\ref{fig:vertical} for the related numerical simulations for comparison.

\begin{proposition}
    \label{p:ah} 
    \begin{compactenum} 
    \item[{\rm (i)}]
    If $0 < a_1 < a_2$ and $h > 0$ then 
    $c_{a_2, h}(z) < c_{a_1, h}(z)$ for all $z \in [-a_1, h]$
    and $c_{a_1, h}'(z) < c_{a_2, h}'(z)$ for all $z \in [-a_1, 0]$ and  all $z \in [0, h).$ 
    \item[{\rm (ii)}]
    If $ a > 0$ and $0 < h_1 < h_2$ then $c_{a, h_2}(z) < c_{a, h_1}(z)$ 
    for all $z \in (-a, h_1]$ and 
    $c_{a, h_2}'(z) < c_{a, h_1}'(z)$ for all $z \in [-a,0] $ and all $z \in [0,  h_1]$. 
    \end{compactenum}
\end{proposition}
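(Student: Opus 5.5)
The plan is to reduce each comparison to a statement about the interface data $(c(0),c'(0^+))$, and then propagate it into the colony by an ODE comparison argument. By Proposition~\ref{p:1Dc}(i) and \eqref{ss-1dfinal-BC_at_0-expanded}, the agar part is linear with $c'(0^-)=(c(0)-c_0)/a$. Hence the interface condition reads $c'(0^+)=\kappa(c(0)-c_0)$ with $\kappa=d_0/a$, exactly as in \eqref{simplification-c}.

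In the colony I use the first integral of $c''=\lambda(c)$. Set $m=c(h)>0$. Since $c'(h)=0$ and $c'<0$ on $[0,h)$ by Proposition~\ref{p:1Dc}(ii), we have $c'(z)=-\sqrt{2(\Lambda(c(z))-\Lambda(m))}$. Equivalently, writing $s=h-z$ and letting $\phi_m$ solve $\phi''=\lambda(\phi)$, $\phi(0)=m$, $\phi'(0)=0$, we have $c(z)=\phi_m(h-z)$. Here $\phi_m$ and $\phi_m'$ are increasing in $s$ and, by a standard comparison for this monotone nonlinearity, strictly increasing in $m$. The interface condition then becomes the scalar equation
\[
F(m;a,h):=\phi_m'(h)+\kappa\,\phi_m(h)-\kappa c_0=0 ,
\]
which has a unique root (consistent with the uniqueness in Proposition~\ref{p:1Dc}) since $F$ is strictly increasing in $m$.

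First I locate $c(0)$ and $c'(0^+)$. In case (ii), $F$ is strictly increasing in $h$, so $F(m_1;a,h_2)>F(m_1;a,h_1)=0$, which forces $m_2<m_1$. In case (i), increasing $a$ decreases $\kappa$. Since $\phi_m'(h)>0$, we have $\phi_m(h)<c_0$ at the root by Proposition~\ref{p:1Dc}(iii), so $\kappa(\phi_m(h)-c_0)<0$ is increasing in $\kappa$ for each fixed $m$. Thus $F(m;a_2,h)<F(m;a_1,h)$, which forces $m_2>m_1$... I need to be careful with the sign, and would recheck it. It is cleaner to argue directly on the curve $(c(0),-c'(0^+))$:
\[
\sqrt{2(\Lambda(c(0))-\Lambda(m))}=\kappa(c_0-c(0)).
\]
The left side is increasing in $c(0)$ and the right side is decreasing, so the intersection point moves in a monotone way. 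For (i) with fixed $h$, I would use $c(0)=\phi_m(h)$ and $-c'(0^+)=\phi_m'(h)$: both increase in $m$ along a fixed monotone curve, while the line with smaller slope $\kappa$ meets that curve lower down. So $c_{a_2}(0)<c_{a_1}(0)$ and $|c'_{a_2}(0^+)|<|c'_{a_1}(0^+)|$. For (ii), with the smaller $m$ the left side is larger at each given $c(0)$, so $c_{h_2}(0)<c_{h_1}(0)$ and $c_{h_2}'(0^+)=-\kappa(c_0-c_{h_2}(0))<c_{h_1}'(0^+)$.

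Next I propagate into the colony. In case (ii), let $w=c_{a,h_1}-c_{a,h_2}$ on $[0,h_1]$. Then $w(0)>0$, $w'(0)>0$ and $w''=\lambda(c_{a,h_1})-\lambda(c_{a,h_2})$, which has the sign of $w$. A first-exit argument shows that $w>0$ and $w'>0$ persist on $[0,h_1]$: as long as $w>0$ we have $w''>0$, so $w'$ stays positive and $w$ increases. This gives both claimed colony inequalities. In case (i) I instead compare in the variable $s$, using $\phi_{m_2}<\phi_{m_1}$ and $\phi'_{m_2}<\phi'_{m_1}$ for $s>0$ with $m_2<m_1$. This yields $c_{a_2,h}<c_{a_1,h}$ on $[0,h]$ and $c'_{a_1,h}<c'_{a_2,h}$ on $[0,h)$, with equality only at $z=h$, where both derivatives vanish.

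Finally I handle the agar using linearity. In (ii) both profiles equal $c_0$ at $z=-a$, so the ordering of $c(0)$ gives strict ordering of $c$ on $(-a,0]$ and of the slopes $(c(0)-c_0)/a$ on $[-a,0]$. In (i), on $[-a_1,0]$ I write $c=c(0)+c'(0^-)z$ with $z\le0$. We have $c_{a_2}(0)<c_{a_1}(0)$, and $c'_{a_2}(0^-)=c'_{a_2}(0^+)/d_0>c'_{a_1}(0^-)$, so $c'_{a_2}(0^-)z\le c'_{a_1}(0^-)z$; adding the two gives $c_{a_2}<c_{a_1}$. The main obstacle is the interface step in (i): getting the direction of monotonicity of $m$ right when $\kappa$ changes, which I would settle via the geometric curve--line intersection picture rather than through the sign of $F$.
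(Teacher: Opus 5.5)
Your proof is correct, and it follows a genuinely different route from the paper's.

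\textbf{The paper's route.} The paper argues by comparison and contradiction.
\begin{itemize}
\item In (i) it first orders the agar profiles by a crossing-point argument. If the two lines met at an interior point, both functions would solve the problem with that common value as Dirichlet data, contradicting uniqueness. It then orders the colony profiles by a first-touch analysis of $u=c_{a_1,h}-c_{a_2,h}$, which solves $u''=A(z)u$ with $A$ bounded above and below. The tangential case $u'(z_0)=0$ is ruled out by a logarithmic growth estimate, and the transversal case by $u'(h)=0$.
\item In (ii) it first excludes $c_{a,h_1}(0)=c_{a,h_2}(0)$ via the first integral and uniqueness for the first-order IVP. It then shows that an ordering at $z=0$ propagates to $(-a,h_1]$ through a localized weak formulation (an energy identity). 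Finally it excludes the reverse ordering by a sign argument on $u'$ at $0$ and $h_1$.
\item The derivative inequalities come at the end from integrating $c''=\lambda(c)$.
\end{itemize}

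\textbf{Your route.} You shoot from $z=h$ with the family $\phi_m$, which is independent of $a$ and $h$. Both parts then reduce to locating the scalar root $m=c(h)$ and the interface point $(c(0),-c'(0^+))$, the latter as the intersection of a monotone curve with the line $Y=\kappa(c_0-X)$. The colony ordering in (i) is read off from the monotonicity of $\phi_m$ in $m$. In (ii) a single first-exit argument gives $w>0$ and $w'>0$ simultaneously.

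\textbf{What each buys.} Your argument is shorter and avoids both the case analysis and the weak formulation. It also yields, as by-products, existence and uniqueness for the one-dimensional problem and the monotonicity of $c(h)$ in $a$ and $h$. The cost is that it rests entirely on the autonomous one-dimensional structure: a one-parameter family of trajectories plus a first integral. The paper's comparison and weak-formulation steps do not need that explicit family and are closer to maximum-principle and variational techniques.

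\textbf{One correction.} Your $F$-based paragraph for (i) has the sign reversed. At the root $m_1$ you have $\phi_{m_1}(h)<c_0$, so
$F(m_1;a_2,h)-F(m_1;a_1,h)=(\kappa_2-\kappa_1)\bigl(\phi_{m_1}(h)-c_0\bigr)>0$, since $\kappa_2<\kappa_1$. Strict monotonicity of $F$ in $m$ then forces $m_2<m_1$, which agrees with your curve--line picture. So that route works too. Remove the erroneous lines and state $m_2<m_1$ explicitly in (i), because that is exactly the input your colony comparison there relies on.
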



\begin{figure}[hpt]
\begin{center}
\includegraphics[width=0.9\textwidth]{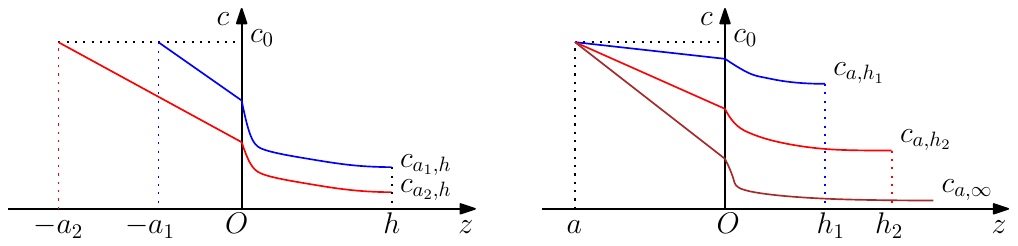}
\caption{A schematic illustration of Proposition~\ref{p:ah}.
Left: The concentration profiles $c_{a_1,h}$ and $c_{a_2,h}$ with $a_1<a_2$. 
Right: The concentration profiles $c_{a,h_1}$ and $c_{a,h_2}$ with $h_1<h_2$. 
The limiting profile $c_{a,\infty}$ is also shown in the right panel. 
}
\label{f:ah}
\end{center}
\end{figure}

\begin{proof}
{Part (i).} We first show that $c_{a_2, h}(z) < c_{a_1, h}(z)$ if $ -a_1 \le z < 0.$ Suppose that this is not true. 
Since by Proposition~\ref{p:1Dc} both $c_{a_1, h}$ and $c_{a_2,h}$ are linear in $ [-a_1, 0]$ and 
$
c_0 = c_{a_1, h}(-a_1) > c_{a_2, h}(-a_1) > 0,
$
there exists $a \in (0, a_1)$ such that 
\[
\hat{c}_0 := c_{a_1, h}(-a) = c_{a_2, h}(-a) > 0 \qquad \mbox{and} \qquad 
c_{a_1, h}(z) < c_{a_2,h}(z) \qquad \mbox{if } -a < z < 0.
\]
This means that both $c_{a_1, h}$ and $c_{a_2, h}$ are solutions to the system
of equations \eqref{ss-vfbp-nondim-expanded} with $c_0$ 
replaced by $\hat{c}_0 > 0$. Thus, the uniqueness of solution implies that 
$c_{a_1, h} = c_{a_2, h}$ on $[-a, 0]$. This is a contradiction. 
Similarly, if $c_{a_1, h}(0) = c_{a_2, h}(0) \in (0, c_0)$, then, 
since both $c_{a_1, h}$ and $c_{a_2, h}$ satisfy
the same equation on $(0, h)$ and the same boundary conditions at $z = 0$ and $z = h$, they are the same unique
solution on $(0, h)$. Hence, $c_{a_1, h}'(0^+) = c_{a_2, h}'(0^+),$ 
and thus by the interface condition
$c_{a_1, h}'(0^-) = c_{a_2, h}'(0^-).$ This, the assumption that $c_{a_1, h}(0) = c_{a_2, h}(0),$ 
and the fact that 
$c_{a_1, h}(-a_1) = c_{a_2, h}(-a_2) = c_0$ imply that $a_1 = a_2$, 
which is impossible. Hence, $c_{a_2, h}(z) < 
c_{a_1, h}(z)$ for all $z \in [-a_1, 0].$

Next, we show that $c_{a_2, h}(z) < c_{a_1, h}(z)$ for any $z \in (0, h].$ Assume this is not true. 
Then, there exists $z_0 \in (0, h]$ such that $c_{a_2, h}(z) < c_{a_1, h}(z)$ for all $z \in [0, z_0)$ 
but $c_{a_2, h} (z_0) = c_{a_1, h}(z_0). $ Let $u = c_{a_1, h} - c_{a_2, h}$ on $[-a, h].$ 
Thus, $u(z) > 0$ in $[0, z_0)$ and $u(z_0) = 0.$ Hence $u'(z_0) \le 0.$ 
There are two different cases: either $u'(z_0) = 0$ or $u'(z_0) < 0.$ 
We shall get a contradiction for each of these two cases. 

First, we observe that 
 $u \in C^2([0, h])$ is the unique solution to the following boundary-value problem
\begin{align}
\label{probu}
u'' = A(z) u \quad \mbox{in } (0, h), \qquad 
 u(0) > 0, 
 \quad \mbox{and} \quad u'(h) = 0, 
\end{align}
where 
\begin{equation}
\label{Az}
A(z) = \frac{1}{ (c_{a_1, h}(z) + 1) ( c_{a_2, h}(z) + 1)}. 
\end{equation}
By Proposition~\ref{p:1Dc}, $c_{a_1, h}$ and $c_{a_2, h}$ are nonnegative and decreasing in $[0, h].$ Hence, 
\begin{equation}
    \label{A}
    0 < A(0) \le A(z) \le A(h) \qquad \forall z \in [0, h].
\end{equation} 

Now consider the case $u'(z_0) =0.$
By \eqref{A}, the fact that $u > 0$ on $[0, z_0)$, and \eqref{probu}, 
we have 
\begin{equation}
\label{AAA}
0< A(0) u (z) \le A(z) u (z) = 
u'' (z) \le A(h) u (z)  \qquad \forall z \in (0, z_0).
\end{equation}
Hence, $u''> 0 $ in $(0, z_0) $ and thus
$u'(z) \le u'(z_0) = 0 $ if $ z \in (0, z_0).$
Multiplying all parts in \eqref{AAA} by $2 u'(z) \le 0$ and then integrating 
from $z \in (0, z_0)$ to $z_0$ and using the fact that 
$ u(z_0) = u'(z_0) = 0 $ and $ u'\le 0 $ and $ u > 0 $ in $ (0, z_0),$ we get 
\[
- \sqrt{A(0)} \, u (z) \ge 
u' (z) \ge - \sqrt{A(h)}\, u (z) \qquad \forall z\in (0, z_0).
\]
By dividing all sides by $u (z) > 0$ for $z \in (0, z_0)$ and integrating the resulting sides
from $\varepsilon$ with $0< \varepsilon < z_0$ to  $z'\in (\varepsilon, z_0)$, we obtain by sending $\varepsilon\to 0^+$ that 
\[
-\sqrt{A(0)} \, z' \ge 
\ln u(z') - \ln u(0)  \ge - \sqrt{A(h)}\, z' \qquad \forall z' \in (0, z_0).
\]
As $z'\to z_0^-$, $u(z') \to 0^+$. We thus get a contradiction as the middle term above goes to $-\infty$ while the first
and last term tend to constants as $z'\to z_0^-.$  
Therefore, we have ruled out the case that $u'(z_0) = 0.$ 

Now consider the case that $u'(z_0) < 0.$ This implies that  $z_0 < h$, since $u'(h) = 0,$
and that  $u(z) < 0$ if $z > z_0$ and $z$ is close to 
$z_0$.  Suppose $u(z) < 0 $ for all $z \in (z_0, h)$, then $u'' < 0$ in $(z_0, h)$ by 
\eqref{probu}.
 Hence $u'$ decreases in $(z_0, h)$ and thus $u'(h) \le u'(z_0) < 0$, contradicting the fact that 
$u'(h) = 0.$ Thus, there exists $z_1 \in (z_0, h)$ such that $u(z_1) = 0$ and $u(z) < 0$ if $z \in (z_0, z_1).$
By \eqref{probu} again, $u''< 0$ in $ (z_0, z_1)$ and hence 
$u'(z) \le u'(z_0) < 0 $ if $  z \in (z_0, z_1).$
Thus $u$ decreases in $(z_0, z_1)$. But $u(z) < u(z_0) = 0$ if $z_0 < z < z_1$ and $z $ is close to $z_0$. For such 
$z$, we thus have 
$
0 = u(z_1) \le  u(z) < u(z_0) = 0,
$
a contradiction. We have thus ruled out the case $u'(z_0) < 0.$
Therefore, $c_{a_2, h}(z) < c_{a_1, h}(z)$ for any $z \in (0, h]$ and hence for any $z \in [-a_1, h].$

Finally, we show that $c_{a_1,h}'(z) < c_{a_2, h}'(z)$ for any $z \in [-a_1, 0]$ and for 
any $z \in [0, h).$ 
 By what has been proved, $c_{a_2, h}(\hat{z}) < c_{a_1, h}(\hat{z})$ for any $\hat{z} 
\in [0, h]. $ Hence, $\lambda(c_{a_2, h}(\hat{z})) < \lambda (c_{a_1, h}(\hat{z}))$ for any $\hat{z}
\in [0, h].$ It then follows from the equation for $c_{a_i, h}$ on $(0, h)$ and the boundary condition 
$c_{a_i, h}'(h) = 0$ $(i=1,2)$ that for any $z \in (0, h)$
\begin{align}
\label{cpa2a1}
    -c_{a_2,h}'(z) &= \int_z^h  c_{a_2, h}''(\hat{z})\,\text{d}\hat{z}
    =  \int_z^h  \lambda (c_{a_2, h}(\hat{z}))\, \text{d}\hat{z} \nonumber \\
    & <  \int_z^h  \lambda (c_{a_1, h}(\hat{z}))\, \text{d}\hat{z} = 
    \int_z^h  c_{a_1, h}''(\hat{z})\, \text{d}\hat{z} = -  c_{a_1, h}'(z).
\end{align}
Hence $c_{a_1, h}'(z) < c_{a_2, h}'(z)$ for any $z \in (0, h).$ The above inequality \eqref{cpa2a1}
also holds true if we replace $z$ by $0$. Thus, 
$c_{a_1, h}'(0^+) < c_{a_2, h}'(0^+)$ and hence
$c_{a_1, h}'(0^-) < c_{a_2,h}'(0^-).$ Now for any $z \in [-a_1, 0)$, we have 
\[
c_{a_1, h}'(z) = c_{a_1, h}'(0^-) < c_{a_2,h}'(0^-) = c_{a_2, h}'(z).
\]

{Part (ii).} We first show that $c_{a, h_1}(0) \ne c_{a,h_2}(0).$ Suppose $c_{a,h_1}(0) = c_{a, h_2}(0).$
Then, since both $c_{a, h_1} $ and $c_{a, h_2}$ are linear in $[-a, 0]$ and $c_{a, h_1}(-a) = c_{a, h_2}(-a) = c_0$, 
we have $c_{a,h_1}'(0^-) = c_{a, h_2}'(0^-)$ and hence $c_{a, h_1}'(0^+) = c_{a, h_2}'(0^+).$ Since 
$ c_{a, h_i}'' = \lambda (c_{a, h_i})$ on $ (0, h_i)$ for $i = 1, 2,$ we can multiply both sides of
the equation by $c_{a,h_i}'$ and then integrate from $0$ to any $z \in (0, h_1) \subset (0, h_2)$ to get 
\[
  [c_{a, h_i}'(z)]^2 - [c_{a,h_i}'(0^+)]^2 
= 2 \Lambda (c_{a, h_i}(z)) - 2  \Lambda (c_{a, h_i}(0)) \qquad \forall z \in (0, h_1). 
\]
Let 
\[
\beta = [c_{a,h_1}'(0^+)]^2- 2 \Lambda (c_{a, h_1}(0) )
= [c_{a,h_2}'(0^+)]^2- 2 \Lambda (c_{a, h_2}(0) ).
\]
Then, since $c_{a, h_i}' < 0$ in $(0, h_1)$ by Proposition~\ref{p:1Dc}, we have for $i  = 1$ or $2$ that
\[
c_{a, h_i}'(z) = -\sqrt{ 2 
\Lambda (c_{a, h_i}(z)) + \beta } \qquad \forall z \in (0, h_1).
\]
Note that the right-hand side is a $C^1$-function of $z \in (0, h_1).$ Moreover, its 
derivative is bounded in absolute value in any compact sub-interval of $(0, h_1).$ Thus, since
$c_{a, h_1}(0) = c_{a, h_2}(0),$ the uniqueness of an initial-value problem of a first-order ODE
implies that $c_{a, h_1}(z) = c_{a, h_2}(z)$ for any $z \in (0, h_1).$ 
Thus $c_{a,h_2}'(h_1) = c_{a, h_1}'(h_1) = 0$. This is in contradiction with the fact that 
$c_{a, h_2}'(z) < 0$ for $0 < z < h_2$ by Proposition~\ref{p:1Dc} and the assumption 
that $ 0 < h_1 < h_2.$ Thus, 
$c_{a, h_1}(0) \ne c_{a, h_2}(0). $

We now have two possibilities. One is $ c_{a,h_1}(0) > c_{a, h_2}(0)$ and the other is 
$c_{a, h_2}(0) > c_{a, h_1}(0).$ We claim the following:  $c_{a,h_1} (z) > c_{a, h_2}(z)$ for any $z \in (-a, h_1]$
if  $c_{a, h_1}(0) > c_{a, h_2}(0)$ and 
$c_{a,h_2} (z) > c_{a, h_1}(z)$ for any $z \in (-a, h_1]$
if  $c_{a, h_2}(0) > c_{a, h_1}(0).$
Assume  $c_{a, h_1}(0) > c_{a, h_2}(0).$
Clearly, $c_{a, h_1}(z) > c_{a, h_2}(z)$ if $z \in (-a, 0]$, since both $c_{a, h_1}$ and $c_{a, h_2}$ are linear 
on $(-a, 0]$ and they both equal $c_0$ at $z = -a.$ We thus need to show only that $c_{a,h_1}(z) > c_{a, h_2}(z)$
for any $z \in (0, h_1].$
If this is not true, then there exists 
$z_0 \in (0, h_1]$ such that $c_{a, h_1}(z) > c_{a, h_2}(z)$ if $0 \le z < z_0$ and $c_{a, h_1}(z_0)
= c_{a, h_2}(z_0).$ Then, by the weak formulation for $c_{a,h_i}$, we have for $i = 1 $ or $2$ that
\begin{equation}
\label{weakhi}
\int_{-a}^0 d_0 c_{a, h_i}' v' \, \text{d}z + \int_{0}^{z_0} 
\left[ c_{a, h_i}' v'  + \lambda(c_{a, h_i}) v \right] \text{d}z = 0
\end{equation}
for any $v \in H^1((-a, h_i))$ such that $v(-a) = 0$ and $v = 0 $ on $[z_0, h_i].$
Let $u = c_{a, h_1} - c_{a, h_2} \in H^1((-a, h_1)).$ Clearly $u(-a) = u(z_0) = 0. $
Let $v = u$ on $(-a, z_0)$ and $v = 0$ on $[z_0, h]$. Then $v \in H^1((-a, h_1))$
and $v(-a) = v(z_0) = 0$. By \eqref{weakhi}, we thus have 
\begin{equation}
\label{weakweak}
\int_{-a}^0 d_0 u'^2 \text{d}z + \int_0^{z_0} \left[  u'^2 +  \left( \lambda(c_{a, h_1})
- \lambda(c_{a, h_2}) \right) u \right] \text{d}z = 0.
\end{equation}
By \eqref{lambda} and Proposition~\ref{p:1Dc},
\[
\left( \lambda(c_{a, h_1}) - \lambda(c_{a, h_2}) \right) u 
= \frac{ u^2 }{(c_{a,h_1} + 1) 
(c_{a, h_2}+1)}\ge \frac{ u^2}{(c_0+1)^2} 
\qquad \mbox{in } (-a, z_0). 
\]
Therefore, it follows from \eqref{weakweak} that $u = 0$ and hence $c_{a,h_1} = c_{a, h_2}$ 
in $(-a, z_0)$. In particular, $c_{a, h_1}(0) = c_{a,h_2}(0)$. This contradicts the assumption
that $c_{a, h_1}(0) > c_{a, h_2}(0)$. Hence $c_{a, h_1}(z) > c_{a, h_2}(z)$ for any $z \in 
(-a, h_1]$ provided that $c_{a, h_1}(0) > c_{a, h_2}(0).$ Similarly, 
$c_{a, h_1}(z) < c_{a, h_2}(z)$ for any $z \in (-a, h_1]$ provided that $c_{a, h_1}(0) < c_{a, h_2}(0).$

We now prove that $c_{a,h_2}(z) < c_{a, h_1}(z)$ for any $z \in (-a, h_1].$ If this is not true, then 
$c_{a, h_2}(0) > c_{a, h_1}(0)$, since $c_{a, h_2}(0) \ne c_{a, h_1}(0)$. Hence,  following
from what we have proved above, $c_{a, h_2}(z) > c_{a, h_1}(z)$ for any $z \in (-a, h_1].$
Let again $u = c_{a, h_1} - c_{a, h_2}$ on $[-a, h_1].$ Then $u < 0$ on $(-a, h_1].$ Since $c_{a, h_1}(0) < c_{a, h_2}(0)$
and $c_{a, h_1}(-a) = c_{a, h_2}(-a) = c_0$, we have $0 > c_{a,h_2}'(0^-) > c_{a, h_1}'(0^-).$
Hence $0 > c_{a, h_2}'(0^+) > c_{a, h_1}'(0^+),$ i.e., $u'(0^+) < 0.$ On the other hand, 
by Proposition~\ref{p:1Dc}, $c_{a,h_2}'(h_1) < 0$. This and the fact that $c_{a, h_1}'(h_1) = 0$
imply that  $u'(h_1) = c_{a, h_1}'(h_1) - c_{a, h_2}'(h_1) > 0$. 
We can verify that 
$ u'' = A(z) u$ in $(0, h_1)$, where $A(z) $ is given in \eqref{Az} and 
it satisfies  $0 < A(0) \le A(z) \le A(z_1) $ if $z \in (0, h_1).$
Since $u < 0$ in $(0, h_1)$, we have  $u''<0$ in $(0, h_1).$ Hence 
$u'(z) \le u'(0^+) < 0 $ if $0 < z < h_1.$ Sending $z \to h_1^-$, we get
$u'(h_1) < 0$, in contradiction with $u'(h_1) > 0.$ 

Finally, we prove that 
 $c_{a, h_2}'(z) < c_{a, h_1}'(z)$ for all $z \in [-a,0] $ and  all $z \in [0,  h_1]$. 
 Since $c_{a, h_2}(0)
< c_{a, h_1}(0)$ and $c_{a, h_1}(-a) = c_{a, h_2}(-a) = c_0$, we have  
\[
c_{a, h_2}'(z) = c_{a, h_2}'(0^-)
< c_{a, h_1}'(0^-) = c_{a, h_1}'(z) \qquad \forall z \in [-a, 0].
\]
This also implies 
that  $ c_{a, h_2}'(0^+) < c_{a, h_1}'(0^+).$ Moreover, 
as we have shown, $0 < c_{a, h_2}(\hat{z}) < c_{a, h_1}(\hat{z})$ and hence 
$\lambda (c_{a, h_2}(\hat{z})) < \lambda (c_{a, h_1}(\hat{z}))$ for any $\hat{z}\in [0, h_1].$
Consequently, by the equation
for $c_{a,h_i}$ on $(0, h_i)$ $(i = 1, 2)$, we have for any $z \in (0, h_1]$ that 
\begin{align*}
c_{a, h_2}'(z) &=  c_{a, h_2}'(0^+) + \int_0^z   \lambda (c_{a, h_2}(\hat{z}))\, \text{d}\hat{z} 
<  c_{a, h_1}'(0^+) +  \int_0^z   \lambda (c_{a, h_1}(\hat{z}))\, \text{d}\hat{z} =  c_{a, h_1}'(z),
\end{align*}
as desired. 
\end{proof}


We now consider the asymptotic behaviors of the steady-state concentration $c_{a, h} = c_{a, h}(z)$ $(-a < z < h)$ 
in the limit of thick agar plate or large colony height. Recall that $\kappa = d_0/a.$

\begin{proposition}
    \label{p:1Dasymp} 
    \begin{compactenum}
        \item[{\rm (i)}]
    For any $h > 0$, $ c_{a,h}(0) \searrow 0$ as $a \to \infty$ and $c_{a, h}(0) \nearrow c_0$ as 
    $a \to 0$. 
    \item[{\rm (ii)}]
    For any $a > 0$, both  $c_{a, h}(0)$ and $c_{a, h}(h)$ decrease as $h$ increases, 
    \[
       \lim_{h\to \infty} c_{a,h}(h) = 0, 
      \qquad \mbox{and} \qquad 
    \lim_{h\to \infty} c_{a, h}(0) = \mu, 
    \]
    where $\mu = \mu (a, c_0) \in (0, c_0)$ is the unique solution to 
      \begin{equation}
   \label{definemu}
  \kappa^2 \left( c_0 - \mu \right)^2 = 2 \Lambda(\mu). 
   \end{equation}
   For a fixed $c_0 > 0$, $ \mu = \mu(a, c_0)$ is a strictly decreasing function
   of $a > 0$ with 
   \[
   \lim_{a\to 0} \mu(a, c_0) = c_0 
\qquad \mbox{and} \qquad    
   \lim_{a \to \infty} \mu(a, c_0) = 0. 
   \]
   For a fixed $a > 0$, $\mu = \mu(a, c_0)$ is a strictly increasing function 
   of $c_0 > 0$ with 
   \[
   \lim_{c_0 \to 0} \mu(a, c_0) = 0 \qquad \mbox{and} \qquad  
   \lim_{c_0\to \infty}
   \mu(a, c_0) = \infty. 
   \]
   Moreover, $\mu = 1$ when $\kappa^2 (c_0-1)^2 =2 \ln (e/2).$
   \item[{\rm (iii)}]
    For a fixed $a > 0$, the limit 
    \begin{equation} 
    \label{cainfcah}
    c_{a, \infty}(z) := \lim_{h\to \infty} c_{a, h} (z) \in [0, c_0] 
    \end{equation}
    exists for any $z \in [-a, \infty).$
    The function $c_{a, \infty}: [-a, \infty)\to (0, c_0]$ is continuous and decreasing.
    It is the unique nonnegative solution to the following boundary-value problem: 
    \begin{subequations}\label{vfbp:c-infinity}
    \begin{empheq}[left=\empheqlbrace]{align}   
    \label{cinfty-}
           &  c_{a, \infty}'' = 0 & & \mbox{for } z \in (-a, 0), \\
        \label{cinfty+}
        &  c_{a, \infty}'' =  \lambda (c_{a, \infty}) & & \mbox{for } z\in (0, \infty), \\
        \label{cinftyjump}
        & \qquad d_0 c_{a, \infty}'(0^-) =  c_{a, \infty}'(0^+),  &  & \\
        \label{cinftyBC}
        & \qquad c_{a, \infty}(-a) = c_0 \quad \mbox{and} \quad c_{a, \infty}(\infty) = 0. & &
    \end{empheq}
    \end{subequations}
  Moreover, $c_{a, \infty}'$ monotonically increases on $(0, \infty)$ and 
  \[
  c'_{a, \infty} (\infty):=\lim_{z \to \infty} c'_{a, \infty} (z)~=~0.
  \]
   \end{compactenum}
\end{proposition}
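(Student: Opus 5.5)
The plan is to lean on the monotonicity results of Proposition~\ref{p:ah}, and to pass to limits using the first integral of $c''=\lambda(c)$ on $(0,h)$.

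\emph{Part (i).} By Proposition~\ref{p:ah}(i), $c_{a,h}(0)$ is strictly decreasing in $a$, and by Proposition~\ref{p:1Dc}(iii) it lies in $(0,c_0)$, so both monotone limits exist. Put $\kappa=d_0/a$. The interface condition gives $c_{a,h}'(0^+)=\kappa(c_{a,h}(0)-c_0)$, and Proposition~\ref{p:1Dc}(ii) gives
\[
|c_{a,h}'(0^+)|=\int_0^h\lambda(c_{a,h})\,dz\le h\,\lambda(c_{a,h}(0))\le h .
\]
Letting $a\to0$, so $\kappa\to\infty$, forces $c_{a,h}(0)\to c_0$. Letting $a\to\infty$, so $\kappa\to0$, gives
\[
\int_0^h\lambda(c_{a,h})\,dz=\kappa(c_0-c_{a,h}(0))\le\kappa c_0\to0 .
\]
Since $c_{a,h}$ is decreasing and $c_{a,h}(0)\le c_{a,h}(z)+h\,|c'_{a,h}(0^+)|$, we get $c_{a,h}(h)\to0$ and then $c_{a,h}(0)\to0$.

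\emph{Part (ii).} Monotonicity in $h$ follows directly. For $c_{a,h}(0)$ it is Proposition~\ref{p:ah}(ii). For $c_{a,h}(h)$, take $h_1<h_2$; then $c_{a,h_2}(h_2)<c_{a,h_2}(h_1)<c_{a,h_1}(h_1)$. Now multiply the equation by $c'$ and use $c'(h)=0$. This gives the first integral
\[
(c_{a,h}')^2=2\Lambda(c_{a,h})-2\Lambda(c_{a,h}(h))\qquad\text{on }[0,h].
\]
Let $m=\lim_{h\to\infty}c_{a,h}(h)\ge0$. If $m>0$, then $c''=\lambda(c)\ge\lambda(m)>0$ on $(0,h)$. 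Integrating gives $|c'(0^+)|\ge h\lambda(m)\to\infty$, which contradicts $|c'(0^+)|=\kappa(c_0-c(0))\le\kappa c_0$. Hence $m=0$. Evaluating the first integral at $z=0$ and inserting the interface condition yields
\[
\kappa^2(c_0-c_{a,h}(0))^2=2\Lambda(c_{a,h}(0))-2\Lambda(c_{a,h}(h)).
\]
Letting $h\to\infty$, the limit $\mu$ satisfies \eqref{definemu}. This limit is unique in $(0,c_0)$, because on $[0,c_0]$ the left side of \eqref{definemu} strictly decreases from $\kappa^2c_0^2>0$ to $0$, while the right side strictly increases from $0$. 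By the same argument, $\mu>0$: at $\mu=0$ the left side is positive and the right side is zero.

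The dependence of $\mu$ on $a$ and $c_0$ comes from the equivalent equation $\kappa(c_0-\mu)=\sqrt{2\Lambda(\mu)}$.
\begin{itemize}
\item Increasing $a$ lowers $\kappa$ and shifts the left side down, so $\mu$ decreases strictly.
\item As $\kappa\to\infty$ we need $c_0-\mu\to0$, and as $\kappa\to0$ we need $\Lambda(\mu)\to0$, that is, $\mu\to0$.
\item Increasing $c_0$ shifts the left side up, so $\mu$ increases strictly. Since $\mu<c_0$, we get $\mu\to0$ as $c_0\to0$.
\item If $\mu$ stayed bounded as $c_0\to\infty$, the left side would blow up while the right side stayed bounded, a contradiction; so $\mu\to\infty$.
\end{itemize}
Finally, $\Lambda(1)=1-\ln2=\ln(e/2)$, which gives the stated condition for $\mu=1$.

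\emph{Part (iii).} For fixed $z$, $c_{a,h}(z)$ is decreasing in $h$ once $h\ge z$ and is bounded below by $0$, so the pointwise limit exists. Each $c_{a,h}$ is affine on $[-a,0]$ with endpoint values converging, so it converges to the affine function with values $c_0$ at $-a$ and $\mu$ at $0$; this is \eqref{cinfty-}. On any compact set $[0,L]$, $|c'_{a,h}|\le\kappa c_0$ and $|c''_{a,h}|\le1$. Arzel\`a--Ascoli (applied along a subsequence, with the full limit already identified) then gives $C^1$ convergence. Passing to the limit in the integrated equation yields \eqref{cinfty+} and \eqref{cinftyjump}. Continuity and monotonicity are preserved, and $c_{a,\infty}$ is positive because it satisfies the limiting first integral $c'=-\sqrt{2\Lambda(c)}$ with $c(0)=\mu>0$, as in \eqref{UpU0}.

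The main obstacle I expect is showing $c_{a,\infty}(\infty)=0$. Let $\ell=\lim_{z\to\infty}c_{a,\infty}(z)$; it exists by monotonicity. If $\ell>0$, then $c''\ge\lambda(\ell)>0$ on $(0,\infty)$, so $c'$ eventually becomes positive, contradicting the fact that $c_{a,\infty}$ is decreasing. Hence $\ell=0$. Since $c'=-\sqrt{2\Lambda(c)}$ is increasing (because $c''>0$) and tends to $0$, we also get $c'_{a,\infty}(\infty)=0$.

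For uniqueness, take two nonnegative solutions and let $u$ be their difference. Then $u$ satisfies $u''=A(z)u$ with $A>0$, $u(-a)=0$, $u\to0$ at infinity, and the jump condition at $0$. A maximum-principle argument, or testing with $u$ on truncated intervals as in \eqref{weakweak} and using that $u'$ is bounded and tends to $0$, rules out a positive maximum or a negative minimum. So $u\equiv0$.
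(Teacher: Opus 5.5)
Your proposal is correct. Parts (i) and (ii) follow the paper's route: monotonicity from Proposition~\ref{p:ah}, the identity $-c_{a,h}'(0^+)=\int_0^h\lambda(c_{a,h})\,dz$, and the first integral evaluated at $z=0$. Your handling of $a\to 0$ via $\kappa(c_0-c_{a,h}(0))\le h$ is more direct than the paper's contradiction argument.

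The real differences are in part (iii).
\begin{itemize}
\item \emph{Convergence.} The paper avoids compactness. By Proposition~\ref{p:ah}(ii), both $c_{a,h}(z)$ and $c'_{a,h}(z)$ are monotone in $h$. The paper gets uniform convergence on $[0,A]$ from the sandwich $c_{a,h_0}(0)-c_{a,h}(0)<c_{a,h_0}(z)-c_{a,h}(z)<c_{a,h_0}(A)-c_{a,h}(A)$. It then identifies the monotone limit of $c'_{a,h}$ with $c'_{a,\infty}$ by dominated convergence. Your route (uniform $C^2$ bounds, Arzel\`a--Ascoli, uniqueness of the limit) is more generic and does not need the ordering of derivatives.
\item \emph{The limit at infinity.} The step you flagged as the main obstacle, $c_{a,\infty}(\infty)=0$, is a one-liner in the paper: $0\le c_{a,\infty}(h)\le c_{a,h}(h)\to 0$, using (ii). Your convexity argument also works.
\item \emph{Uniqueness.} The paper first shows, by a comparison argument, that two solutions agree at $z=0$. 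It then invokes uniqueness for the first-order problem $v'=-\sqrt{2\Lambda(v)+\eta}$. Your maximum-principle argument for $u''=A(z)u$ is shorter. You should say why a positive maximum cannot occur on $[-a,0]$: $u$ is affine there with $u(-a)=0$, which forces $u'(0^-)>0$ and hence $u'(0^+)=d_0u'(0^-)>0$.
\end{itemize}

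One detail to state explicitly concerns positivity of $c_{a,\infty}$. Deducing $c_{a,\infty}>0$ from $c'=-\sqrt{2\Lambda(c)}$ with $c(0)=\mu>0$ requires $\sqrt{2\Lambda(u)}$ to be Lipschitz near $u=0$. This holds because $\Lambda(u)\sim u^2/2$; if $\Lambda(u)$ behaved like $u$, solutions could reach $0$ in finite time. Alternatively, argue that a zero at $z_1$ would give $c(z_1)=c'(z_1)=0$. Uniqueness for $c''=\lambda(c)$ then forces $c\equiv 0$, as in Proposition~\ref{p:1Dc}(iii).
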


\begin{proof}
{Part (i).} By Proposition~\ref{p:1Dc} and Proposition~\ref{p:ah},  $ 0 < c_{a, h}(0) < c_0$ for all $a > 0 $ 
    and $c_{a, h}(0)$ is a decreasing function of $a > 0$. 
   Thus $\lim_{a\to \infty} c_{a, h}(0) $ exists and is nonnegative.   Note that 
   $ c'_{a, h}(0^-) =  ({c_{a,h}(0) - c_0})/{a} \to 0$ as $a \to \infty.$
   Hence, $c_{a, h}'(0^+) \to 0$ as $a \to \infty.$ Consequently, 
   \[
   \int_0^{h}  c''_{a, h}(z)\, dz =  c'_{a, h}(h) -  c'_{a, h}(0^+) = -  c'_{a, h}(0^+) \to 0
   \qquad \mbox{as } a \to \infty. 
   \]
   By Proposition~\ref{p:1Dc}, $c_{a, h}(z) > c_{a, h} (h) > 0$ if $0 \le z < h.$ Thus, 
   \[
   h  \lambda (c_{a, h}(h)) \le \int_0^h  \lambda (c_{a, h}(z)) \, \text{d}z 
  = \int_0^h c''_{a, h}(z)\, \text{d}z \to 0 \qquad \mbox{as } a \to \infty.
   \]
   This implies $c_{a, h}(h) \to 0$ and hence $\Lambda(c_{a, h}(h)) \to 0$ as $a \to \infty.$
   By the equation of $c_{a, h}$ on $(0, h)$, we have  
   \begin{equation}
   \dfrac{\text{d}}{\text{d}z}\left[(c'_{a,h}(z))^2-2\Lambda(c_{a,h}(z))\right]=0. 
\label{first-integral}
    \end{equation}
This and the fact that $c'_{a, h}(h) = 0$ lead to 
   \begin{equation}
   \label{cp0p}
   - [c'_{a, h}(0^+)]^2 = 2  \Lambda (c_{a, h}(h)) - 2  \Lambda (c_{a, h}(0)).
   \end{equation}
   Therefore, $\Lambda (c_{a, h}(0)) \to 0$ and $c_{a, h}(0) \to 0$ as $a \to \infty.$ 

By Proposition~\ref{p:ah}, $c_{a, h}(0)$ increases as $a $ decreases and is bounded above
by $c_0$. So, $\lim_{a \to 0} c_{a, h}(0)$ exists and is not bigger than $c_0$. If this limit is strictly less
than $c_0$, then 
\[
c'_{a, h}(0^-) =  \frac{c_{a,h}(0) - c_0}{a} \to -\infty \quad \mbox{as } a \to 0.
\]
The interface condition 
then implies that $c'_{a, h}(0^+)\to -\infty. $ However, by \eqref{cp0p}, 
$c_{a, h}'(0)$ should be bounded for all $a.$ This is a contradiction. Hence, $c_{a, h}(0) \to c_0$
as $a \to 0.$



{Part (ii).} By Proposition~\ref{p:1Dc} and Proposition~\ref{p:ah}, $c_{a, h}(0) \in (0, c_0)$
   and $c_{a, h}(h) \in (0, c_0) $ both decrease as
   $h $ increases.  
   Thus, the limits
   \begin{equation}
   \label{munu}
   \mu:=\lim_{h\to \infty} c_{a, h}(0)\qquad \mbox{and} \qquad  \nu:=\lim_{h\to \infty} c_{a, h} (h)
   \end{equation}
   exist, and $\mu \ge \nu\ge 0$ by Proposition~\ref{p:1Dc} and Proposition~\ref{p:ah}. 
   Since $c_{a,h}'(0^-) = ( c_{a, h}(0) - c_0)/a$ and $d_0 c_{a,h}'(0^-) = c_{a, h}'(0^+)$
   for all $h > 0, $
   we have the limit
   \begin{equation} 
   \label{hkprime}
   \lim_{h\to \infty} c_{a,h}'(0^+) = 
   \kappa (\mu-c_0). 
   \end{equation}
    Since $c_{a,h}(z) > c_{a, h} (h)$ for all $z \in (0, h)$ and $c_{a, h}'(h) = 0$ for
 all $h > 0$, we obtain by the equation for $c_{a, h}$ on $(0, h)$ that 
   \[
   -  c_{a, h}'(0^+) =  \int_0^{h}  c_{a, h}''(z)\, dz 
   = \int_0^{h}  \lambda(c_{a,h}(z))\, dz > \lambda
 (c_{a, h}(h)) h \ge   \lambda (\nu) h \qquad \forall h > 0. 
   \]
   This and \eqref{hkprime} imply that 
   $ 0 \le   \lambda(\nu) < - { c_{a, h}'(0^+)}/{h} \to 0 $ as  $ h \to \infty.$
    Thus $\lambda(\nu) = 0$ and hence $\nu = 0.$ 

   Sending $h\to \infty$ in \eqref{cp0p}, we obtain \eqref{definemu} 
   by \eqref{munu} with $\nu = 0$ and \eqref{hkprime}.  Define 
   \[
   \tau(x) = \kappa^2 \left( c_0 - x \right)^2 - 2  \Lambda(x) 
   \qquad   \forall x \in [0, c_0]. 
   \]
We verify that $\tau \in C^1([0, c_0])$, $\tau(0) > 0$, $\tau(c_0) < 0$, and 
$\tau'(x) < 0$ for any $x \in [0, c_0]$. Moreover, $\mu \in [0, c_0]$ and $\tau(\mu) = 0$ 
which is \eqref{definemu}.
Thus $\mu \in (0, c_0)$ is the unique solution to \eqref{definemu}. 

Since $\kappa = d_0/a$, we can rewrite \eqref{definemu} as 
\begin{equation}
\label{a2muc0mu}
a^2 = \frac{ d_0^2 (c_0-\mu)^2}{2  \Lambda (\mu)} 
\qquad \mbox{and} \qquad 
c_0 = \mu + \frac{ \sqrt{ 2\Lambda(\mu) }}{\kappa}, 
\end{equation}
respectively. Thus, with $c_0>0$ fixed, $a$ is a strictly decreasing function of $\mu \in (0, c_0)$ and 
hence, as an inverse function, $\mu = \mu(a, c_0)$ is a strictly decreasing function
of $a > 0$. Similarly, with $a $ fixed, $c_0$ is a strictly increasing function of $\mu > 0$ and
hence $\mu = \mu(a, c_0)$ is a strictly increasing function of $c_0 > 0$. 
The desired limits follow from \eqref{a2muc0mu}. The condition on
$c_0$ and $a$ for 
$\mu = 1$ can be directly verified. 



{Part (iii).}
Let $z \in [-a, \infty)$. By Proposition~\ref{p:ah},  $c_{a, h}(z)\in [0, c_0]$ 
decreases as $h > z$ increases. Thus, the limit \eqref{cainfcah} exists for any
$z \in [-a, \infty).$
exists. 
On $[-a, 0]$, each $c_{a, h}$ is linear with $c_{a,h}(-a) = c_0$
and $c_{a,h}(0) \to \mu $ as $h \to \infty.$ Thus, the convergence \eqref{cainfcah} is uniform
on $[-a, 0]$, and hence $c_{a, \infty}$ is continuous on $[-a, 0]$. 
Let $0 < A < h_0 < h$. By Proposition~\ref{p:ah}, 
$c_{a, h_0}'(\hat{z}) > c_{a, h}'(\hat{z})$ for any $\hat{z} \in (0, A).$
Integrating both sides of this inequality against $\hat{z}$ 
from $0$ to $z\in (0, A)$ and from 
$z$ to $A$, we obtain 
\[
c_{a,h_0} (0) - c_{a, h}(0) < c_{a, h_0}(z) - c_{a, h}(z) < c_{a, h_0}(A) - c_{a, h}(A)
\qquad \forall z \in (0, A).
\]
Hence, sending $h \to \infty,$ we obtain
\[
c_{a,h_0} (0) - c_{a, \infty}(0) \le c_{a, h_0}(z) - c_{a, \infty}(z) \le c_{a, h_0}(A) - c_{a, \infty}(A)
\qquad \forall z \in (0, A).
\]
Consequently, the convergence \eqref{cainfcah} is uniform on any $[0, A]$ 
and hence $c_{a, \infty}$ is continuous 
on $[0, A]$ since each $c_{a, h}$ is. Thus, $c_{a, \infty}$ is a continuous function 
on $[-a, \infty).$

Since $c_{a, h}$ is linear in $[-a, 0]$, $c_{a, h}(-a) = c_0$, $0 < c_{a, h}(0) < c_0$, 
and $c_{a, h}(0) \searrow \mu \in (0, c_0)$ as $ h \to \infty$, $c_{a, \infty}$ is linear 
in $[-a, 0]$ determined by $c_{a, \infty}(-a) = c_0$ and $c_{a, \infty} (0) = \mu.$ Hence, 
\eqref{cinfty-} and the first equation in \eqref{cinftyBC} hold true. Since
each $c_{a, h}$ decreases, by \eqref{cainfcah}, 
$c_{a, \infty}$ decreases in $[-a, \infty)$ and is bounded, the limit
 $
 c_{a, \infty}(\infty) = \lim_{z\to \infty} c_{a, \infty}(z) \in [0, c_0]
 $
 exists. Since $c_{a, h} \searrow c_{a, \infty}$ on $ (0, \infty)$ and 
 $c_{a, h} (h) \searrow 0$,   
 \[
 0 \le c_{a, \infty}(\infty)
 \le c_{a, \infty} (h) \le c_{a, h}(h) \to 0 \qquad \mbox{as } h \to \infty.
 \]
 Hence, the second equation in \eqref{cinftyBC} holds true.  

Consider now the limit of $c_{a, h}'$ on $[-a, 0]$ and $[0, \infty).$ We have 
\begin{equation} 
\label{qinfty-}
q_{a, \infty} (z) := \lim_{h\to \infty} c_{a, h}'(z) = 
\lim_{h \to \infty} \frac{c_{a, h}(0) - c_0}{a} = \frac{\mu - c_0}{a} \qquad \forall z \in [-a, 0].
\end{equation}
Let $z \in [0, \infty)$ and $h > z.$ By Proposition~\ref{p:ah}, 
$c_{a, h}'(z)$ decrease as $h$ increases. Moreover, 
\begin{equation} 
\label{cpahbound}
-c_0 \kappa < d_0 c_{a, h}'(0^-) = c_{a, h}'(0^+)
\le c_{a, h}'(z) < c_{a, h}'(h) = 0.
\end{equation}
Thus the limit 
\begin{equation} 
\label{qinfty+}
 q_{a, \infty}(z) := \lim_{h\to \infty } c_{a, h}'(z)\qquad \forall z \in [0, \infty)
 \end{equation}
exists. 
By \eqref{qinfty-},  $q_{a, \infty}< 0$ is a constant and hence
continuously differentiable  on $[-a, 0].$
By \eqref{cpahbound} and \eqref{qinfty+}, 
$-c_0 \kappa \le q_{a, \infty} \le 0$ 
on $[0, \infty)$. 
Note that $q_{a, \infty}: [-a, 0) \cup (0, \infty) \to \R$ is also a
decreasing function as each $c_{a, h}'$ is so. 

Since $c_{a, h}$ and 
$c_{a, \infty}$ are linear on $[-a, 0]$, we obtain by the fact that $c_{a, h}(0)\to c_{a, \infty}(0)
= \mu \in (0, c_0)$ that
\begin{align} 
\label{qcp-}
q_{a, \infty}(z) &= \lim_{h \to \infty}c_{a, h}'(z) = \lim_{h\to \infty} \frac{c_{a, h}(0) - c_0}{a} 
= \frac{c_{a, \infty}(0) - c_0}{a} = c_{a, \infty}'(z) \qquad \forall z \in [-a, 0].
\end{align}
Let $0 < z < A < h$. By Proposition~\ref{p:ah}, on $[0, A]$, $c_{a, h}'$ is bounded uniformly for all $h > A.$ 
By the Lebesgue Dominated
Convergence Theorem and the convergence \eqref{qinfty+}, we have 
\begin{align*}
c_{a, \infty} (z) - c_{a, \infty}(0) 
&= \lim_{h\to \infty} \left[ c_{a, h} (z) - c_{a, h}(0) \right]
= \lim_{h \to \infty} \int_0^z c_{a, h}'(\hat{z})\, d\hat{z} 
 = \int_0^z q_{a, \infty}(\hat{z})\, d\hat{z}. 
\end{align*}
Thus, 
\begin{equation} 
\label{qcp+}
q_{a, \infty} (z) = c_{a, \infty}'(z) \qquad \forall z \in [0, \infty)
\end{equation}

Since $d_0c_{a, h}'(0^-) =  c_{a, h}'(0^+)$ for all $h > 0$, we obtain
the interface condition \eqref{cinftyjump} by \eqref{qinfty-}, \eqref{qinfty+}, \eqref{qcp-}, and \eqref{qcp+}. 
Since $c_{a, h}''(z ) =  \lambda (c_{a, h}(z))$ for $0 < z < h$, we have 
\[
 c_{a,h}'(z) -  c_{a, h}'(0^+) = \int_0^z  \lambda (c_{a, h}(\hat{z}))\, \text{d}\hat{z}
 \qquad \mbox{if } 0 < z < h.
\]
Sending $h\to \infty$, we get 
\[
 c_{a,\infty}'(z) -  c_{a, \infty}'(0^+) = \int_0^z  \lambda (c_{a, \infty}(\hat{z}))\, \text{d}\hat{z}
 \qquad \forall z > 0.
\]
Thus, \eqref{cinfty+} holds true. 

We now prove the uniqueness of a continuous and nonnegative solution to \eqref{cinfty-}--\eqref{cinftyBC}. 
Let $\hat{c}_{a, \infty}$ be also a continuous and nonnegative solution to \eqref{cinfty-}--\eqref{cinftyBC}. 
Suppose $\hat{c}_{a,\infty}(0) > c_{a, \infty}(0)$.  
Let $u = \hat{c}_{a, \infty} - c_{a, \infty}.$ Then $u(0) > 0$. Since both $c_{a, \infty}$ and 
$\hat{c}_{a, \infty}$ are linear on $[-a, 0]$ and $a$ is fixed, we have  $u'(0^+) > 0.$ 
Suppose there exists $z_0 > 0$ such that $u > 0$ on $[0, z_0)$ and $u(z_0) = 0. $
Then, $\lambda (\hat{c}_{a,\infty}) \ge \lambda (c_{a, \infty})$ on $[0, z_0]$. Hence, 
for any $z \in (0, z_0)$, 
\begin{align}
\label{useful}
\hat{c}_{a, \infty}'(z) - \hat{c}_{a, \infty}'(0^+) & = 
\int_0^z \hat{c}_{a, \infty}''(\hat{z})\, \text{d} \hat{z} = \int_0^z
\lambda( \hat{c}_{a, \infty} (\hat{z})) \, \text{d} \hat{z} 
\nonumber \\
& \ge \int_0^z 
\lambda (c_{a, \infty}(\hat{z}))\, \text{d} \hat{z} 
= \int_0^z c_{a, \infty}''(\hat{z}) \, \text{d} \hat{z}
\nonumber \\
&
= c_{a, \infty}'(z) - c_{a, \infty}'(0^+). 
\end{align}
Consequently, $u'(z) \ge u'(0^+) > 0$ for any $z \in [0, z_0].$ Since $u(0) > 0$, we thus have 
\[
0 = u(z_0) = u(0) + \int_0^{z_0} u'({z}) \, \text{d}{z} \ge u(0) + u'(0^+) z_0 > 0,
\]
a contradiction. Thus, we have $\hat{c}_{a, \infty} > c_{a, \infty}$ on $[0, \infty).$ Moreover, 
the inequality in \eqref{useful} becomes strict. Thus, $u'(z) > u'(0^+) > 0$ for all $z > 0.$ Hence, 
$u(z) > u(0) > 0$ for all $z > 0.$ This contradicts $u(\infty) = 0.$
Therefore, it is impossible to have $\hat{c}_{a, \infty}(0) > c_{a, \infty}(0).$
Similarly, it is impossible to have $c_{a, \infty}(0) > \hat{c}_{a, \infty}(0).$
Hence, $\hat{c}_{a, \infty} (0) = c_{a, \infty}(0)$. Note this common value is in 
$(0, c_0).$ Since both $\hat{c}_{a, \infty}$ and 
$c_{a, \infty}$ are linear on $[-a, 0]$ and $\hat{c}_{a, \infty}(-a) = c_{a, \infty}(-a) 
= c_0$, we infer that $\hat{c}_{a, \infty} = c_{a, \infty}$  on $[-a, 0].$ 

We show now $\hat{c}_{a, \infty} = c_{a, \infty}$ on $(0, \infty).$ First, we observe
that both of them are decreasing in $(0, \infty).$ If not, we may assume there exists
$z_1 > 0$ such that, e.g., $\theta := \hat{c}_{a, \infty}'(z_1) > 0.$ Since $\hat{c}_{a, \infty }
\ge 0$ and 
$\hat{c}_{a, \infty}'' = \lambda (\hat{c}_{a, \infty}) \ge 0 $ in $(0, \infty), $
we have $\hat{c}_{a, \infty}'(z) \ge \theta$ for all 
$z \ge z_1.$ This leads to 
\[
\hat{c}_{a, \infty}(z) = \hat{c}_{a, \infty}(z_1) + \int_{z_1}^z 
\hat{c}_{a, \infty}'(\hat{z})\, \text{d}\hat{z} \ge \hat{c}_{a, \infty} (z_1) + \theta (z - z_1) \to \infty
\qquad \mbox{as } z \to \infty,
\]
contradicting $\hat{c}_{a, \infty}(\infty) = 0.$ Thus, both $\hat{c}_{a, \infty}$ and $c_{a, \infty}$
are positive and decrease on $(0, \infty).$
By the first integrals \eqref{first-integral} for both $\hat{c}_{a, \infty}$ and $c_{a, \infty}$, we observe 
that both 
of these functions are solutions to the following initial-value problem of 
differential equation for $v = v (z) > 0$ $(0 < z < \infty)$:
\begin{equation}  
\label{IVP}
 v'(z)  = - \sqrt{ 2 \Lambda (v(z) ) + \eta}\quad \forall z \in (0, \infty)
\qquad \mbox{and} \qquad v(0) = \hat{c}_{a, \infty}(0) = c_{a, \infty}(0),
\end{equation}
where 
\[
\eta =  [ \hat{c}_{a, \infty}'(0^+)]^2 - 2  \Lambda( \hat{c}_{a, \infty}(0))
=  [ c_{a, \infty}'(0^+)]^2 - 2  \Lambda ( c_{a, \infty}(0)).
\]
Since for any $m, M \in \R$ with $0 < m < M$, the function $v \mapsto \sqrt{2 
\Lambda (v) + \eta} $ is 
Lipschitz-continuous on $[m, M],$ the solution to the initial-value problem \eqref{IVP} is unique. 
Thus, $\hat{c}_{a, \infty} = c_{a, \infty}$ on $[0, \infty).$

Finally, the function $c_{a, \infty}: [-a, \infty)\to [0, c_0]$, as defined by the limit
\eqref{cainfcah}, 
is a bounded and decreasing function, 
since each $c_{a, h}: [-a, h]\to [0,c_0] $ is. 
 Since $c''_{a, \infty} > 0$ on $(0, \infty)$, $c'_{a, \infty}$ increases on $(0, \infty)$.
By the first integral for $c_{a, \infty}$ (cf.\ \eqref{first-integral}), 
\[
[c'_{a, \infty}(z)]^2 - [c'_{a, \infty}(0^+)]^2 = 2 \Lambda (c_{a, \infty}(z)) - 2 \Lambda(c_{a, \infty}(0)). 
\]
This and the fact that $c_{a, \infty}$ is bounded imply that $c'_{a, \infty}$ is bounded. Hence, 
$c'_{a, \infty} (\infty) = \lim_{z\to \infty} c'_{a, \infty}(z)$ exists.  Since $c_{a, \infty}$ decreases monotonically, 
we have $c'_{a, \infty}(z) \le 0$ for all $z \in (0, \infty)$. Hence, $c_{a, \infty}'(\infty) \le 0$. 
Suppose $c'_{a, \infty}(\infty) < 0$. Then, since $c''_{a,\infty}=\lambda(c_{a,\infty})>0$, we have $c'_{a,\infty}(z)\leq - |c'_{a,\infty}(\infty)|$ contradicting $c_{a,\infty}(\infty)=0$. 
Thus, $c_{a, \infty}'(\infty) = 0.$
\end{proof}



It is natural to expect that the nutrient concentration $c_h(z)$ decreases as one moves away from its source, the agar at $z=0$. The following proposition confirms this intuition and provides estimates
for the depletion rate.

\begin{proposition}
   \label{p:depletion}
   Given $a > 0$, $c_0 > 0$, and $h> 0.$ Let $c = c(z) $ $ (z \in [-a, h])$
   be the unique solution to \eqref{ss-vfbp-nondim-expanded}. Assume that 
 $c(0) \ge 1 \ge c(h)$. Then, there exists 
   a unique $z_\ast \in [0, h]$ such that 
   $c(z_\ast) = 1$ and
    \begin{align}
   \label{lezstar}
       & c(z) \le \left( 1 + \frac{1}{\sqrt{2}} (z_\ast - z ) \right)^2
       \qquad \mbox{if } 0 \le z \le z_\ast,
       \\
       \label{gezstar}
       & e^{-(z-z_\ast)} \le {c(z)} \le  e^{- (z - z_\ast)/\sqrt{3}}
       \qquad \mbox{if } z_\ast < z \le h. 
   \end{align}
Moreover, $z_\ast = z_\ast(q)$ depends uniquely on the flux  $q:= c'(0^+)$ with
\begin{equation}
\label{zstarq}
z_\ast(q) = 2 \left( \sqrt{\xi(q_0)} - \sqrt{\xi (q)}  \right)
= - 2  \left(  q_0 +\sqrt{\xi(q)}\right)
\qquad \forall q \in [q_0, q_h], 
\end{equation}
where $\xi(q) = [c'(z_\ast)]^2$ and 
\begin{equation}
\xi (q) =   q^2 +2  \left[ \Lambda(1) - \Lambda \left( c_0 + \frac{q}{\kappa}
\right) \right] \ge 0 \qquad \forall q \in [q_0, q_h], \label{def_of_xi}
\end{equation}
and 
$q_0$ and $q_h$ are the flux when $z_\ast = 0$ $($i.e., $c(0) = 1)$ 
and  $z_\ast = h$ 
$($i.e., $c(h) = 1)$, determined by 
$q_0 = \kappa (1-c_0) $ and $\xi(q_h) = 0$, respectively,  and $-\kappa c_0 \le q_h < q_0 < 0.$
\end{proposition}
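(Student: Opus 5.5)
The plan is to base everything on the first integral \eqref{first-integral} of \eqref{ss-1dfinal-c-expanded} on $(0,h)$. Together with $c'(h)=0$ and $c'<0$ on $(0,h)$ (Proposition~\ref{p:1Dc}), it gives
\[
c'(z) = -\sqrt{2\Lambda(c(z)) - 2\Lambda(c(h))} \qquad \forall z\in(0,h).
\]
Existence and uniqueness of $z_\ast$ follow at once. By Proposition~\ref{p:1Dc}(ii), $c$ is continuous and strictly decreasing on $[0,h]$, and by assumption $c(0)\ge 1\ge c(h)$; the intermediate value theorem then gives a unique $z_\ast\in[0,h]$ with $c(z_\ast)=1$.

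\emph{Quadratic bound \eqref{lezstar}.} On $[0,z_\ast]$ we have $c\ge 1$. I will look at $\sqrt{c}$ as a function of $s=z_\ast-z$. Using $\Lambda(u)\le u$ for $u\ge0$,
\[
\frac{d}{ds}\sqrt{c} = \frac{|c'|}{2\sqrt c} \le \frac{\sqrt{2\Lambda(c)}}{2\sqrt c} \le \frac{1}{\sqrt2}.
\]
Integrating from $s=0$, where $\sqrt{c}=1$, gives $\sqrt{c(z)}\le 1+(z_\ast-z)/\sqrt2$, which is \eqref{lezstar}.

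\emph{Exponential bounds \eqref{gezstar}.} On $(z_\ast,h]$ we have $0<c\le1$.

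For the lower bound, I use $\Lambda(u)\le u^2/2$, which holds because $\Lambda(0)=\Lambda'(0)=0$ and $\Lambda''\le1$. This gives $|c'|\le\sqrt{2\Lambda(c)}\le c$, so $(\ln c)'\ge -1$. Integrating from $z_\ast$ yields $c(z)\ge e^{-(z-z_\ast)}$.

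For the upper bound I need a lower bound on $|c'|$, and here the subtracted term $2\Lambda(c(h))$ is the obstacle. The Taylor estimate $\Lambda(u)\ge u^2/2-u^3/3\ge u^2/6$ on $[0,1]$ explains the constant $\sqrt3$: if the term $\Lambda(c(h))$ were absent, it would give $(\ln c)'\le -1/\sqrt3$ and hence the bound directly. To handle the Neumann end I would first try a comparison argument. Since $c''=\lambda(c)\ge c/2\ge c/3$ on this region, $u=c-w$ can be compared with a supersolution $w$ of $w''\le w/3$ satisfying $w(z_\ast)\ge1$. Alternatively, I would combine the first integral with the monotonicity of $\Lambda(c)-\Lambda(c(h))$, for instance via $\Lambda(c)-\Lambda(c(h))\ge\Lambda(c)-\Lambda(c(h))\,c/c(h)$, to absorb the endpoint term. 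I expect this step to be the main obstacle, and it may require a slightly different constant or an argument near $z=h$ separately.

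\emph{Formula \eqref{zstarq}.} Parametrize the solution by the flux $q=c'(0^+)$. By \eqref{simplification-c} we have $c(0)=c_0+q/\kappa$, and the first integral evaluated between $0$ and $z_\ast$ gives $c'(z_\ast)^2 = q^2+2\Lambda(1)-2\Lambda(c_0+q/\kappa)=\xi(q)$. This is \eqref{def_of_xi}, and $\xi\ge0$ is automatic since it is a square. The endpoint cases are as follows. When $z_\ast=0$, i.e.\ $c(0)=1$, we get $q_0=\kappa(1-c_0)$, and then $\xi(q_0)=q_0^2$, which gives the second equality in \eqref{zstarq}. When $z_\ast=h$ we get $c'(h)=0$, i.e.\ $\xi(q_h)=0$. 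The ordering $-\kappa c_0\le q_h<q_0<0$ comes from $0<c(0)<c_0$ together with the monotonicity of $c(0)$ in $h$ from Proposition~\ref{p:ah}(ii).

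To obtain the first equality in \eqref{zstarq}, I would write $z_\ast(q)=\int_1^{c(0)}dc/\sqrt{2\Lambda(c)-2\Lambda(c(h))}$ and differentiate with respect to $q$. I would then check that $\frac{d}{dq}z_\ast = -\frac{d}{dq}\bigl(2\sqrt{\xi(q)}\bigr)$, starting from $z_\ast(q_0)=0$. Verifying that this identity holds exactly, rather than only as an estimate, is the second point I would scrutinize carefully.
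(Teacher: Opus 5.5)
Your arguments for the existence and uniqueness of $z_\ast$, for \eqref{lezstar}, for the lower bound in \eqref{gezstar}, and for \eqref{def_of_xi} (with $q_0=\kappa(1-c_0)$ and $\xi(q_h)=0$) are correct and are the paper's own. The two steps you flagged cannot be completed, because the claims they are meant to prove are false.

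\emph{The upper bound in \eqref{gezstar}.} Take $a=d_0$ (so $\kappa=1$) and $h=1$. Let $c$ solve $c''=\lambda(c)$ on $[0,1]$ with $c(1)=0.99$ and $c'(1)=0$, and set $c_0:=c(0)-c'(0)$. Extended linearly to $[-a,0]$, this $c$ is the solution of \eqref{ss-vfbp-nondim-expanded}, and $c(0)\ge 0.99+\lambda(0.99)/2>1$. On $[z_\ast,1]$ we have $c''\le\lambda(1)=\tfrac12$ and $c'(1)=0$, so $0.01=c(z_\ast)-c(1)\le\tfrac14(1-z_\ast)^2$. Hence $1-z_\ast\ge0.2$, and $e^{-(1-z_\ast)/\sqrt3}\le e^{-0.2/\sqrt3}<0.9<c(1)$.

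The same construction with $c(h)\to1^-$ gives $h-z_\ast\ge2\sqrt{1-c(h)}$. It follows that no bound $e^{-\beta(z-z_\ast)}$ holds for any $\beta>0$, so neither a different constant nor a comparison function can rescue it. Your comparison with $w=e^{-(z-z_\ast)/\sqrt3}$ breaks exactly at the Neumann end, where $(c-w)'(h)=-w'(h)>0$.

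Any correct upper bound must involve $c(h)$. For example, $\lambda(s)\ge s/2$ on $[0,1]$ gives $[c']^2\ge\tfrac12\bigl(c^2-c(h)^2\bigr)$ on $[z_\ast,h]$. The substitution $c=c(h)\cosh\theta$ then yields $c(z)\le e^{-(z-z_\ast)/\sqrt2}+\tfrac12 c(h)$. The paper's proof does not get around this either. It asserts $[c'(z)]^2\ge2\Lambda(c(z))$ ``by \eqref{FirstIntegral}'', which reverses the inequality because of precisely the term $-2\Lambda(c(h))$ you identified.

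\emph{Formula \eqref{zstarq}.} Your planned verification would fail, and your integral representation is the right tool to see it. Write $z_\ast(q)=\int_1^{c_0+q/\kappa}\bigl(2\Lambda(u)+q^2-2\Lambda(c_0+q/\kappa)\bigr)^{-1/2}du$. At $q=q_0$ the range of integration is empty, so only the moving endpoint contributes, and $z_\ast'(q_0)=1/(\kappa|q_0|)$. On the other hand, $\xi'(q)=2q-\tfrac{2}{\kappa}\lambda(c_0+q/\kappa)$ and $\lambda(1)=\tfrac12$. Therefore $\tfrac{d}{dq}\bigl(-2\sqrt{\xi(q)}\bigr)=-\xi'(q)/\sqrt{\xi(q)}$ equals $2+1/(\kappa|q_0|)$ at $q=q_0$.

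Both sides of \eqref{zstarq} vanish at $q_0$ but have different slopes there, so the formula is wrong. Numerically, for $\kappa=1$, $c_0=3$, $q=-1.9$, the true value is $z_\ast\approx0.053$, while \eqref{zstarq} gives $\approx0.254$.

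The paper obtains \eqref{zstarq} by differentiating $\xi(q)=[c'(z_\ast(q))]^2$ as $2c'(z_\ast)c''(z_\ast)z_\ast'(q)$, that is, by holding the profile fixed while $q$ varies. View $c=c(z;q)$ as the solution of $c''=\lambda(c)$, $c(0)=c_0+q/\kappa$, $c'(0)=q$. The omitted term $2c'(z_\ast)\,\partial_q c'(z_\ast;q)$ equals $2q_0\neq0$ at $q=q_0$, which is exactly the discrepancy of $2$ in the slope.

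\emph{The ordering of $q_0$ and $q_h$.} The ordering you say follows from Proposition~\ref{p:ah}(ii) is reversed. When $z_\ast=h$ one has $c(0)>c(h)=1$, so $q_h=\kappa(c(0)-c_0)>q_0$. The true statement is $-\kappa c_0<q_0<q_h<0$. This is what the paper's proof actually derives ($q_h\in(q_0,0)$), and it is the only ordering for which $[q_0,q_h]$ is a nonempty interval.
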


\begin{proof}
 The existence and uniqueness of $z_\ast \in [0, h]$ follows from the fact that $c = c(z)$
is a strictly decreasing function on $[0, h]$ (cf.\ Proposition~\ref{p:1Dc}). 
Consider $z \in [0, h]$. By \eqref{first-integral} with $c$ replacing $c_{a, h}$ and $c'(h)=0$, we have the first integral 
\begin{equation}
\label{FirstIntegral}
[c'(z)]^2 = 2 \Lambda (c(z)) - 2 \Lambda (c(h))
\qquad \forall z \in [0, h]. 
\end{equation}
Consequently, by the definition of $\Lambda$ \eqref{Fc}, 
\[
[c'(z)]^2 \le 2 \Lambda (c(z)) \le  2  c(z)\qquad \forall  z \in [0, h].
\]
Since $c'(z) < 0$ for $z \in (0, h),$  we get 
$-( \sqrt{c(z)}\,)' \le 1/\sqrt{2}.$
Integrating both sides from $z  $ to $z_\ast$ and noting that 
$c(z_\ast) = 1$, we obtain 
\eqref{lezstar}. 

Consider now the case $z_\ast < z \le h$ which leads to 
$ 1 = c(z_\ast) > c(z)$.
Thus, it follows from the first integral
\eqref{FirstIntegral} and the fact
$x - \ln (1 + x) < x^2/2 $ for all $x \in (0, 1)$ that 
\[
 [c'(z)]^2 \le 2 \Lambda (c(z))\le  [c(z)]^2 
\qquad \forall z \in [z_\ast, h]. 
\]
Consequently, $-[\ln c(z) ]' \le 1 $ for all $z \in [z_\ast, h].$
Integrating both sides of this inequality from $z_\ast $ to $z$ and noting that 
$c(z_\ast)=1$, we obtain the first inequality in \eqref{gezstar}. 
Denoting 
$x = c(z) \in (0, 1)$, we have  
\[
\ln(1+x) <  x-\frac{1}{2} x^2 + \frac{1}{3} {x^3} < x - \frac{1}{6} x^2. 
\]
Therefore, by \eqref{FirstIntegral}, 
\[
 [c'(z)]^2  \ge 2  \Lambda (c(z)) = 2 
\left[ c(z) - \ln \left( 1 + c(z) \right) \right] 
\ge \frac{1}{3}  [c(z)]^2. 
\]
This leads to $- [\ln c(z) ]' \ge 1/\sqrt{3}$ for any $z \in [z_\ast, h]$. 
Integrating both sides from $z_\ast $ to $z$, we then get the second inequality in 
\eqref{gezstar}. 


By \eqref{ss-1dfinal-c-expanded}, we have for any $z \in (0, h)$ that 
$([c'(z)]^2)' = 2 [\Lambda(c(z))]'$. 
Therefore, integrating both sides of this equation from $z = 0$ to $z = z_\ast$ and noting that
$c(z_\ast) = 1$, $q= c'(0^+)$,  and 
$c(0) = c_0  + a c'(0^-) = c_0 + q/\kappa$, we have 
\begin{equation}
\label{zastq}
 [ c'(z_\ast)]^2 = {q^2}  + 2  \left[ 
 \Lambda(1) - \Lambda \left(  c_0 + \frac{q}{\kappa}\right) \right].
\end{equation}
This and the definition $\xi(q) = [c'(z_\ast)]^2$ imply \eqref{def_of_xi}. 
Note that the flux $q = c'(0^+)$ satisfies 
\begin{equation}
\label{qqq}
0 > q = c'(0^+) = d_0 c'(0^-) = \kappa [ c(0)  - c_0] > - c_0 \kappa. 
\end{equation}
Hence $\xi(q)$ is defined on 
$(-c_0 \kappa, 0).$ 
If $z_\ast  = 0$, then $c(0) = 1$. Hence, by \eqref{qqq}, 
the corresponding flux is $q_0 = \kappa (1-c_0).$
Since $c_0 > c(0) \ge 1$, we have 
$q_0 = -\kappa (c_0-1) \in (-c_0 \kappa, 0).$
If $z_\ast = h$, then $c'(z_\ast) = c'(h) = 0.$ Hence, by \eqref{zastq}, the corresponding
flux $q_h$ satisfies $\xi(q_h) = [c'(z_\ast)]^2 = 0.$
It is directly verified that $\xi'(q) < 0$ in $(-c_0 \kappa, 0)$, 
$\xi(q_0) = q_0^2>0$, 
and $\xi(0^-) < 0.$ Therefore, $q_h \in (q_0, 0) \subset (- c_0\kappa , 0)$ is the unique zero of $\xi: (- c_0 \kappa, 0) \to \R. $

Now,  $c'(z_\ast(q))= - \sqrt{\xi(q)}$ is a continuously differentiable and monotonic function of $q
\in [q_0, q_h]$. Since $c'$ is a smooth and monotonically decreasing function, it follows
that $z_\ast= z_\ast(q)$ is a smooth and monotonic function. Taking the derivative of both sides of
\eqref{zastq} with respect to $q$, we get 
\[
2 c'(z_\ast) c''(z_\ast) z_\ast'(q) = \xi'(q). 
\]
This, the fact that 
$
c''(z_\ast) =  \lambda (c(z_\ast)) = \lambda (1) = 1/2$
and \eqref{zastq} then imply 
\[
z_\ast'(q) =  \frac{\text{d}}{\text{d}q} \left( 
- 2 \sqrt{ \xi (q)} \right).
\]
Finally, noting that $0 = z_\ast (q_0)$ and  $\xi(q_0)  = [c'(z_\ast(q_0))]^2 = [c'(0^+)]^2 = q_0^2,$
we obtain \eqref{zstarq}. 
\end{proof}

We now provide 
some exponential decay estimates for the fixed time problem \eqref{vfbp-nondim} in terms of large $h$
and for the infinite-height problem \eqref{vfbp:c-infinity}. These will be used in the
next subsection to obtain
the estimate of the vertical expansion for the dynamic problem. 


\begin{proposition}
\label{p:exponential}
Let $c_h = c_h(z)$ for any $h > 0$ and $c_\infty = 
c_{\infty}(z)$ be the unique solutions to \eqref{ss-vfbp-nondim-expanded} $($with $c_h$ replacing $c)$
and \eqref{vfbp:c-infinity} $($with $c_\infty$ replacing $c_{a, \infty})$, respectively, 
given $a, $ $c_0$, $d_0$, and $\kappa=d_0/a$. 
Let $\gamma := \sqrt{2 \Lambda(c_0)}/c_0 > 0$ and 
$C_1 := 4 c_0/ \gamma>0.$
\begin{compactenum}
        \item[{\rm (i)}]
The function $c_\infty=c_\infty(z)$ satisfies 
\begin{equation}
| c_\infty'(z)| \le  c_\infty (z) \le c_0 e^{- \gamma z} \qquad \forall z > 0.
\label{exp_decay_on_c_prime}
\end{equation}
\item[{\rm (ii)}] 
We have 
\begin{equation}
        \max\limits_{-a\leq z \leq h}|c_h(z)-c_\infty(z)|\leq 
        C_1 e^{-\gamma h/2} \qquad \forall 
        h > \frac{1}{\kappa}. 
        \label{exponential-decay-of-profile-2}
        \end{equation}
        \item[{\rm (iii)}] 
       Let $\nu \in (0,c_0)$. There exists a unique $z_\ast^\infty \in (-a, \infty)$ such that 
$ c_\infty (z_\ast^\infty) = \nu.$ Assume 
        \begin{equation}\label{def_of_h_nu}
        h > h_\nu := \max \left\{\dfrac{\kappa(c_0-c_\infty(0))}{\lambda(\nu)},\dfrac{1}{\kappa}\right\}.
        \end{equation}
        Then there
        exists a unique $z_*^h \in (-a, h) $ such that $ c_h(z_*^h) = \nu$ 
        and $-a < z_*^\infty < z_*^h< h_\nu$. Moreover, 
        \begin{equation}
        |z_*^{h} - z_{*}^{\infty}|\leq C_2 e^{-\gamma  h/2} \qquad \forall h > h_\nu, 
        \label{exponential-decay-of-z_star}
        \end{equation}
        where $C_2:=C_1/|c'_\infty(h_\nu)|$ is independent of $h$. 
        
\end{compactenum}
\end{proposition}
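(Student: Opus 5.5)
For part (i) I will use the first integral of $c_\infty$ on $(0,\infty)$. Since $c_\infty(\infty)=0$ and $c_\infty'(\infty)=0$ (Proposition~\ref{p:1Dasymp}(iii)), the constant in \eqref{first-integral} vanishes, so
\[
[c_\infty'(z)]^2=2\Lambda(c_\infty(z)).
\]
Because $\Lambda(x)\le x^2/2$ for $x\ge0$, this gives $|c_\infty'|\le c_\infty$. For the lower bound I use that $x\mapsto 2\Lambda(x)/x^2$ is decreasing on $(0,\infty)$; this is checked by differentiation, or by noting that $\Lambda(x)/x^2=\int_0^1\int_0^1 s\,\Lambda''(stx)\,dt\,ds$ with $\Lambda''$ decreasing. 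Since $c_\infty\le c_0$, we get $2\Lambda(c_\infty)\ge \gamma^2 c_\infty^2$, hence $c_\infty'\le -\gamma c_\infty$. Integrating gives $c_\infty(z)\le c_\infty(0)e^{-\gamma z}\le c_0e^{-\gamma z}$.

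For part (ii) I will compare $c_h$ with $c_\infty$. By Proposition~\ref{p:ah}(ii), together with the monotone convergence in Proposition~\ref{p:1Dasymp}(iii), we have $c_\infty\le c_h$ on $[-a,h]$, so $w:=c_h-c_\infty\ge0$. On $(0,h)$, $w''=A(z)w\ge0$ with $A$ as in \eqref{Az}, so $w$ is convex there. On $[-a,0]$, $w$ is linear with $w(-a)=0$, so $\max_{[-a,0]}w=w(0)$.

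The key step is to bound $w(0)$ by the boundary data at $z=h$. I plan two comparisons.
\begin{compactenum}
\item[(a)] At $z=h$, $w'(h)=-c_\infty'(h)=|c_\infty'(h)|\le c_0e^{-\gamma h}$.
\item[(b)] At $z=0$, the interface condition gives $w'(0^+)=d_0w'(0^-)=\kappa w(0)\ge0$. Since $w$ is convex, $w'\ge\kappa w(0)$ on $(0,h)$, so $\kappa w(0)\le w'(h)\le c_0e^{-\gamma h}$, i.e.\ $w(0)\le c_0e^{-\gamma h}/\kappa$.
\end{compactenum}
Then $w(z)\le w(0)+\int_0^z w'\le w(0)+z\,w'(h)$, which only gives a linear-in-$h$ factor. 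To obtain the clean constant I will split at $h/2$. On $[h/2,h]$ I use $c_h\le c_h(h/2)$, together with an exponential bound on $c_h$ itself that follows from its own first integral $[c_h']^2\ge 2\Lambda(c_h)-2\Lambda(c_h(h))$, and $c_\infty\le c_0e^{-\gamma h/2}$. On $[0,h/2]$ I use $w\le w(0)+(h/2)w'(h)\lesssim (1/\kappa+h)c_0e^{-\gamma h}$. Absorbing $he^{-\gamma h/2}\le 2/(e\gamma)$ gives a bound of the form $C_1e^{-\gamma h/2}$; the hypothesis $h>1/\kappa$ turns $1/\kappa$ into $h$. I expect the bookkeeping needed to reach exactly $C_1=4c_0/\gamma$ to be the main obstacle. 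The fallback is a direct estimate: a convex $w$ with $w(h)\le c_h(h)$ and slope at most $w'(h)$ satisfies $w(z)\le w(h)$ on $[0,h]$ (since $w'$ is increasing and $w'(0^+)\ge0$), so it suffices to bound $c_h(h)$. For this I use monotonicity, $c_h(h)\le c_h(h/2)$, and control $c_h(h/2)$ through $c_\infty(h/2)$ plus a $\kappa$-dependent term.

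For part (iii), $c_\infty$ is strictly decreasing, continuous, with $c_\infty(-a)=c_0$ and $c_\infty(\infty)=0$, so $z_\ast^\infty$ exists and is unique; likewise $z_\ast^h$ exists once $c_h(h)<\nu$. To show $c_h(h)<\nu$ I use $\lambda(c_h(h))\,h\le\int_0^h\lambda(c_h)=-c_h'(0^+)=\kappa(c_0-c_h(0))\le\kappa(c_0-c_\infty(0))$, and conclude from $h>h_\nu$. The same integral inequality, evaluated as $\lambda(\nu)z_\ast^h\le\int_0^{z_\ast^h}\lambda(c_h)$, gives $z_\ast^h<h_\nu$. The inequality $z_\ast^\infty<z_\ast^h$ follows from $c_\infty<c_h$. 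For the estimate, the mean value theorem gives
\[
\nu-c_\infty(z_\ast^h)=c_\infty(z_\ast^\infty)-c_\infty(z_\ast^h)=|c_\infty'(\theta)|\,(z_\ast^h-z_\ast^\infty)
\]
for some $\theta\in(z_\ast^\infty,z_\ast^h)\subset(-a,h_\nu)$. Since $|c_\infty'|$ is decreasing (on $(0,\infty)$ because $c_\infty''>0$, and it is constant and larger on $(-a,0)$ by the interface condition when $d_0\le1$; otherwise I use the minimum of the two pieces), we have $|c_\infty'(\theta)|\ge|c_\infty'(h_\nu)|$. Finally, $\nu-c_\infty(z_\ast^h)=c_h(z_\ast^h)-c_\infty(z_\ast^h)\le C_1e^{-\gamma h/2}$ by part (ii), which yields \eqref{exponential-decay-of-z_star}.
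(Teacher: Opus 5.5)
Part (i) is the paper's argument and part (iii) follows the paper's outline, but in part (ii) you take a genuinely different route. The paper never uses the sign of $u=c_h-c_\infty$. It multiplies $u''=A(z)u$ by $u$ and integrates to get the energy identity \eqref{energy_relation_for_u}. It then uses Young's inequality and $|u(h)|^2\le 2|u(0)|^2+2h\int_0^h(u')^2\,dz$, which is where $h>1/\kappa$ is needed, to reach $\max_{[0,h]}|u|\le 2c_0he^{-\gamma h}$. You instead use the ordering $c_\infty\le c_h$ from Propositions~\ref{p:ah}(ii) and \ref{p:1Dasymp}(iii). This makes $w$ nonnegative and convex on $(0,h)$, and you read off $\kappa w(0)=w'(0^+)\le w'(z)\le w'(h)\le c_0e^{-\gamma h}$ pointwise. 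Your route is shorter and sharper at the interface: $w(0)\le c_0e^{-\gamma h}/\kappa$, versus $|u(0)|\le c_0e^{-\gamma h}\sqrt{2h/\kappa}$ from the energy bound. It also uses $h>1/\kappa$ only to write $1/\kappa+h\le 2h$. In exchange, the paper's energy argument needs no comparison principle or sign information, so it would survive in settings where the two profiles are not ordered.

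You do, however, misjudge your own estimate. The inequality $w(z)\le w(0)+z\,w'(h)$ holds on all of $[0,h]$. Hence $\max_{[0,h]}w\le(1/\kappa+h)c_0e^{-\gamma h}<2c_0he^{-\gamma h}$ for $h>1/\kappa$, which is exactly the paper's intermediate bound \eqref{prop36:almost-there}. The absorption $he^{-\gamma h/2}\le 2/(e\gamma)$ that you already wrote down then gives $\max_{[-a,h]}w\le \frac{4c_0}{e\gamma}e^{-\gamma h/2}\le C_1e^{-\gamma h/2}$. So there is no bookkeeping obstacle, and the split at $h/2$ and the fallback should be dropped. In particular, the ``exponential bound on $c_h$ itself'' does not follow from $[c_h']^2=2\Lambda(c_h)-2\Lambda(c_h(h))$. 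That identity gives no decay near $z=h$ unless $c_h(h)$ is already controlled, and $c_h(h)$ is the very quantity being estimated.

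In (iii) you are more careful than the paper. The paper sets $m=\min_{[0,h_\nu]}|c_\infty'|=|c_\infty'(h_\nu)|$, even though $z_*^\infty$ (and possibly $z_*^h$) may lie in $(-a,0)$, where $|c_\infty'|=|c_\infty'(0^+)|/d_0$. For $d_0\le 1$ this slope dominates $|c_\infty'(h_\nu)|$, so the stated $C_2$ is justified. For $d_0>1$, your argument yields the estimate with $C_1/\min\{|c_\infty'(h_\nu)|,|c_\infty'(0^-)|\}$ in place of $C_2$, and the paper's proof does not justify the stated constant in that case either. Also, $c_\infty$ has a kink at $z=0$ when $d_0\neq 1$, so replace the mean value theorem by $\nu-c_\infty(z_*^h)=\int_{z_*^\infty}^{z_*^h}|c_\infty'|\,dz$.
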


\begin{proof} 
Part (i).   Note that  $0 < \Lambda(s)< s^2/2$ for $s > 0$. 
Let $\alpha(s) = \Lambda(s)/s^2$ for $s > 0$ (cf.\ \eqref{Fc} for the definition of $\Lambda$). 
We verify that $\alpha'(s) = s^{-3} \beta(s)$ with $\beta(0^+) = 0$ and $\beta'(s) < 0$ for all $s > 0.$
Hence, $\alpha'(s) < 0$ and $\alpha(s)$ monotonically decreases on $(0, \infty).$
Since $c_0> c_\infty(z)> 0$ for $0<z<\infty$ (cf.\ Proposition~\ref{p:1Dasymp}) (iii)), we obtain
\begin{equation}
\dfrac{\Lambda(c_0)}{c_0^2}[c_\infty(z)]^2 <\Lambda(c_\infty(z) )<\dfrac{[c_\infty(z)]^2}{2}
\qquad \forall z > 0.
\label{eqn_Lambda_estimate}
\end{equation}
The first integral for $c_\infty$, the fact that $c_\infty(\infty)  = c'_\infty(\infty) = 0$, and the 
first inequality in 
\eqref{eqn_Lambda_estimate} then lead to 
\begin{equation}
\label{CLCL}
\dfrac{[c'_\infty(z)]^2}{2}=\Lambda (c_\infty(z))>\dfrac{\Lambda(c_0)}{c_0^2}[c_\infty(z)]^2 \qquad \forall z > 0.
\end{equation}
Taking the square root and noting $c'_\infty(z) <0$ and $c_\infty(z) >0$,  we get
$
c'_\infty(z) + \gamma c_\infty(z) < 0 $ for all $z > 0,$
where $\gamma=\sqrt{2\Lambda(c_0)}/{c_0}.$
Therefore, 
\[
c_\infty(z)< c_\infty(0) e^{-\gamma z} \le c_0 e^{-\gamma z} \qquad \forall z > 0.
\]
Finally, by  \eqref{eqn_Lambda_estimate} and \eqref{CLCL}, we get 
$
[c'_\infty(z)]^2=2\Lambda(c_\infty(z))< [c_\infty(z)]^2$, leading to 
\[
 |c'_\infty(z)| \le c_\infty(z) \qquad \forall z > 0.
\]


{Part (ii).} The function $u(z) := c_h(z)-c_\infty(z)$ ($0<z<h$) satisfies
\begin{equation}
    \label{ucc}
  u''=A(z)u \quad \mbox{if } z \in (0, h), \qquad u'(0)=\kappa u(0),   \qquad 
  u'(h) = - c'_\infty(h),
\end{equation}
where $A(z)=(1+c_\infty(z))^{-1}(1+c_h(z))^{-1}$. Multiplying the first equation above by $u$
and integrating the resulting equation from $z=0$ to $z=h$, and using the integration by parts and the boundary conditions above, we 
obtain 
\begin{equation}
\kappa [u(0)]^2 +\int_0^h \left(u'\right)^2 \,\text{d}z + \int_{0}^{h}A(z) u^2 \,\text{d}z = -c_\infty'(h) u(h).\label{energy_relation_for_u}
\end{equation}
Consequently, since  $A(z)\geq  {1}/{(1+c_0)^2}$ and $|c'_\infty(h)|\leq c_0e^{-\gamma h}$ by
\eqref{exp_decay_on_c_prime} with $z=h$, we have 
\begin{equation}
\kappa [u(0)]^2 +\int_0^h \left(u'\right)^2 \,\text{d}z + \dfrac{1}{(1+c_0)^2}\int_{0}^{h} u^2 \,\text{d}z \leq c_0
e^{-\gamma h} |u(h)|.\label{prop13_estimate2}
\end{equation}
We now estimate $|u(h)|$. 
It follows from the fundamental theorem of calculus, a basic algebraic inequality, and 
the Cauchy--Schwarz inequality that 
\begin{equation}
|u(h)|^2\leq 2|u(0)|^2+ 2h \int_0^h (u')^2\,\text{d}z.\label{ftc-ineq}
\end{equation}
Thus, the right-hand side of \eqref{prop13_estimate2} can be bounded as follows: 
\[
c_0e^{-\gamma h} |u(h)|\leq c_0^2he^{-2\gamma h}+\dfrac{1}{4 h}|u(h)|^2\leq c_0^2he^{-2\gamma h}+
\dfrac{1}{2h}|u(0)|^2+ \dfrac{1}{2}\int_0^h(u')^2\,\text{d}z.
\]
This estimate and \eqref{prop13_estimate2} imply
\[
\kappa [u(0)]^2 +\int_0^h \left(u'\right)^2 \,\text{d}z + \dfrac{1}{(1+c_0)^2}\int_{0}^{h} u^2 \,\text{d}z \leq  c_0^2he^{-2\gamma h}+
\dfrac{1}{2h}|u(0)|^2+ \dfrac{1}{2}\int_0^h(u')^2\,\text{d}z.
\]
Therefore, if $h >  1/\kappa,$ then
\begin{equation}
\kappa[ u(0)]^2 + \int_0^h(u')^2\,\text{d}z \leq 2c_0^2 he^{-2\gamma h}.\label{prop13_estimate3}
\end{equation}
Similar to \eqref{ftc-ineq}, we obtain by \eqref{prop13_estimate3} and for $h > 1/\kappa$ that 
\[
\max_{0\leq z\leq h}|u(z)|^2\leq 2|u(0)|^2+ 2h \int_0^h (u')^2\,\text{d}z\leq 2h 
\left[\kappa |u(0)|^2 + \int_0^h(u')^2\,\text{d}z \right]
\leq 4c_0^2 h^2 e^{-2\gamma h}.
\]
Thus, by taking square roots of both sides, we have
\begin{equation}
\label{prop36:almost-there}
\max_{0\leq z\leq h}|u(z)|
\leq 2c_0 h e^{-\gamma h}. 
\end{equation}

Note for any $\varepsilon > 0$ that 
\[
he^{-\varepsilon h}\leq \max_{\bar{h}>0} \bar{h} e^{-\varepsilon \bar{h}}  = \left.
\bar{h}e^{-\varepsilon \bar{h}}\right|_{\bar{h}=\varepsilon^{-1}}= \varepsilon^{-1}e^{-1}.
\]
This implies that $h\leq \varepsilon^{-1}e^{-1}e^{\varepsilon h}<\varepsilon^{-1}e^{\varepsilon h}$.
Hence, by \eqref{prop36:almost-there}, we obtain that
\begin{equation}
\label{0zh}
\max\limits_{0\leq z\leq h}|u(z)|\leq 2c_0 \varepsilon^{-1} e^{-(\gamma-\varepsilon) h}. 
\end{equation}
 Taking $\varepsilon = \gamma/2$, we obtain \eqref{exponential-decay-of-profile-2}.
By the linearity of both $c_\infty$ and $c_h$ on $[a,0]$ (cf.\ Proposition~\ref{p:1Dc} (i)), 
we get 
\[
|u(z)|=|c_h(z)-c_\infty(z)| = |u(0)|\left(1-\dfrac{|z|}{a}\right),\quad -a\le z \le 0.
\]
This and \eqref{0zh} leads to \eqref{exponential-decay-of-profile-2}. 




{Part (iii).} 
The existence of $z_*^\infty$ such that $c_\infty(z_*^\infty)=\nu$ follows 
from that $c_\infty$ strictly decreases from $c_\infty(-a) = c_0$ to $c_\infty (\infty) = 0$
and that $\nu \in (0, c_0)$. 
Consider $h > h_\nu$, where $h_\nu$ is defined in \eqref{def_of_h_nu}. We show that $c_h(h_\nu) < \nu$. By contradiction assume that $c_h(h_\nu)\geq \nu$. Then $c_h(z) > c_h(h_\nu) \geq \nu$
for all $z \in [0, h_\nu)$. Hence, 
\begin{align*}
h_\nu\lambda(\nu) & <\int_0^{h_\nu} \lambda(c_h(z))\,\text{d}z
=\int_0^{h_\nu} c_h''(z)\,\text{d}z \nonumber \\
&= c'_h(h_\nu) +\kappa (c_0 -c_h(0))<\kappa (c_0 -c_\infty(0)).
\end{align*}
This leads to $h_\nu < \kappa (c_0 - c_\infty(0))/\lambda(\nu),$
contradicting the definition of $h_\nu$. 
The monotonicity of $c_h$ in $z$ then implies that there exists $z_\ast^h \in (-a, h_\nu)$
such that $c_h(z_\ast^h) = \nu$. 
By Proposition~\ref{p:1Dasymp}, $c_h (z) > c_\infty (z)$ for $z \in [-a, h]$. Thus, 
$-a < z_\ast^\infty < z_\ast^h < h_\nu.$ 

Let $ m := \min_{0 \le z \le h_\nu} | c'_\infty(z)| = |c'_\infty(h_\nu)|> 0$. 
Note that $-a < z_*^\infty < z_*^h < h_\nu <h$. 
It follows from  the fundamental theorem of calculus, the fact that 
$c_\infty(z^\infty_\ast) = c_h(z^h_\ast) = \nu$, and Part (ii) that 
\begin{align*}
m|z^h_*-z_*^\infty| \leq |c_\infty(z^h_*)-c_\infty(z^\infty_*)|
=|c_\infty(z^h_*)-c_h(z^h_*)|
\le C_1 e^{-\gamma h / 2}. 
\end{align*}
Setting $C_2=C_1/m$,  we obtain \eqref{exponential-decay-of-z_star}. 
\end{proof}

\begin{rmk}\label{rmk-z-star}
For $\nu = 1$, the
dimensionless equation $c_h(z_*^h) = 1$ means that the nutrient concentration reaches the level $K$ (the Monod constant) 
at the critical, dimensional height (from the agar-colony interface). In a spatial region 
of the colony where the nutrient is below
this level, the local growth rate is too small for cells to grow; cf.\ \cite{Warren_eLife2019,Kannan_NatCommun2025}. 
Proposition~\ref{p:exponential} implies that $z_*^h$ decreases to $z_*^\infty$ exponentially as $h \to \infty.$ 
This indicates the vertical growth is limited as only a thin layer of cells near the agar-colony interface
can grow effectively,
agreeing with the agent-based simulations \cite{Warren_eLife2019,Kannan_NatCommun2025}. 
\end{rmk}

\subsection{Dynamics of vertical expansion}
\label{ss:1Ddynamics}

\begin{theorem}[Well-posedness of the moving-boundary problem \eqref{vfbp-nondim-expanded}]
\label{t:verticalmoving} 
Given $a, c_0, d_0, h_0$ all positive and set $\kappa = d_0/a.$
The system \eqref{vfbp-nondim-expanded} admits a unique global solution
\(
(c(z,t),h(t))
\)
with $ h\in C^2([0,\infty))$ and $h (t) > 0$ for all $t > 0$ and 
$c(\cdot, t)$ is linear on $[-a, 0]$ and 
$c(\cdot,t)|_{[0, \infty) } \in C^\infty([0,h(t)]) $ for all $t > 0.$ Moreover, 
$0< c(\cdot,t)< c_0$ and $c(\cdot, t)$ decreases strictly
on $(-a, h(t))$ for all $t > 0$ and $\partial_t c (\cdot, t) <0$ on $(0, h(t))$ for all $t > 0$. 
\end{theorem}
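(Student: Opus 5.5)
The plan is to reduce the moving-boundary problem to a scalar ODE for $h(t)$. For each fixed $h>0$, Proposition~\ref{p:1Dc} gives a unique solution $c_h$ of \eqref{ss-vfbp-nondim-expanded}. Define
\[
F(h) := -\kappa\,(c_h(0)-c_0) = \kappa\,(c_0 - c_h(0)) = -c_h'(0^+).
\]
The system \eqref{vfbp-nondim-expanded} is then equivalent to the initial-value problem $h'=F(h)$, $h(0)=h_0$, together with $c(\cdot,t)=c_{h(t)}$. By Proposition~\ref{p:1Dc}(iii), $0<F(h)<\kappa c_0$. By Proposition~\ref{p:ah}(ii), $F$ is strictly increasing in $h$. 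So once $F$ is shown to be locally Lipschitz (better, $C^1$), Picard--Lindel\"of gives a unique local solution. Since $0<F<\kappa c_0$, the solution satisfies $h_0\le h(t)\le h_0+\kappa c_0 t$ and cannot blow up, so it extends globally. It is also strictly increasing, so $h(t)>0$.

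The main obstacle is the regularity of $h\mapsto F(h)$, which I would handle by a shooting/implicit function argument. Let $\phi(z;s,q)$ denote the solution of $\phi''=\lambda(\phi)$ with $\phi(0)=s$, $\phi'(0)=q$. This is a smooth ODE for $\phi>-1$, so $\phi$ is smooth in $(z,s,q)$ by smooth dependence on data. The boundary conditions require $q=\kappa(s-c_0)$ and
\[
G(s,h):=\partial_z\phi(h;s,\kappa(s-c_0))=0 .
\]
By uniqueness (Proposition~\ref{p:1Dc}), $s=c_h(0)$ is the unique root. To apply the implicit function theorem I need $\partial_s G\neq0$. The derivative $w=\partial_s\phi$ solves the linearized equation
\[
w''=\lambda'(c_h)\,w,\qquad w(0)=1,\qquad w'(0)=\kappa .
\]
Since $\lambda'>0$, $w$ and $w'$ stay positive, so $\partial_sG=w'(h)>0$. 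Also $\partial_hG=c_h''(h)=\lambda(c_h(h))>0$. Hence $h\mapsto c_h(0)$ is $C^\infty$ and $F\in C^\infty$. This gives $h\in C^1$ from the ODE, and then $h\in C^2$ (indeed $C^\infty$) by differentiating $h'=F(h)$.

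The remaining properties transfer from the fixed-$h$ results, since $c(\cdot,t)=c_{h(t)}$. Linearity on $[-a,0]$, smoothness on $[0,h(t)]$, strict decrease, and $0<c<c_0$ all follow from Proposition~\ref{p:1Dc}; the decrease on $(-a,0)$ comes from the linear formula with $c(0)<c_0$.

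For $\partial_t c<0$ on $(0,h(t))$, I would use the chain rule, $\partial_t c=h'(t)\,\partial_h c_h(z)$, with $h'>0$. It then suffices to show $\partial_h c_h(z)<0$. Proposition~\ref{p:ah}(ii) gives only the non-strict sign of this derivative, so I would argue directly. Differentiate in $h$: the function $v=\partial_h c_h$ satisfies
\[
v''=\lambda'(c_h)\,v \ \text{on }(0,h),\qquad v'(0)=\kappa v(0),\qquad v'(h)=-c_h''(h)<0 .
\]
A maximum-principle argument shows $v<0$. If $v(0)\ge0$, then $v'(0)\ge0$, and $v,v'$ stay nonnegative and increasing, contradicting $v'(h)<0$. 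So $v(0)<0$. If $v$ reached $0$ at some interior point, the same convexity argument used in Proposition~\ref{p:ah}(i) gives a contradiction with $v'(h)<0$.
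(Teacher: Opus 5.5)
Your proposal is correct. Its skeleton matches the paper's: reduce to $h'=F(h)$ with $c(\cdot,t)=c_{h(t)}$, then get regularity of $h\mapsto c_h(0)$ by shooting plus the implicit function theorem, using that the linearized solution with positive initial data stays positive. Three steps are handled differently.

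(1) The paper gets existence and uniqueness of $h$ before any regularity. It shows from Proposition~\ref{p:ah} that your $F$ is globally Lipschitz with constant $1$: $0\le F(h_2)-F(h_1)\le\int_{h_1}^{h_2}\lambda(c_{h_2})\,dz\le h_2-h_1$, because $\lambda(c_{h_2})<\lambda(c_{h_1})$ on $[0,h_1]$. You instead get local Lipschitz continuity from smoothness and close the global argument with the a priori bound $0<F<\kappa c_0$. This is equally valid.

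(2) The paper rescales to $[0,1]$ and proves by hand, with a Gronwall estimate that keeps trajectories in a box, that nearby shooting solutions exist on the whole interval and stay positive. You rely on standard smooth dependence for $\phi''=\lambda(\phi)$ on $\{\phi>-1\}$ and impose the Neumann condition at the moving endpoint $z=h$. This is shorter and gives $F\in C^\infty$ rather than $C^1$.

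One point needs to be made explicit: that the implicit-function branch $s(h)$ really is $c_h(0)$. Proposition~\ref{p:1Dc} gives uniqueness only among nonnegative solutions. Note either that shooting solutions near $c_{h_*}>0$ remain positive, or that any root $s$ gives a solution with no negative minimum. An interior negative minimum would have $\phi''=\lambda(\phi)<0$. One at $z=h$ would be a strict local maximum, since $\phi'(h)=0>\phi''(h)$. One at $z=0$ is excluded by $\phi'(0)=\kappa(s-c_0)<0$.

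(3) For $\partial_t c<0$ your argument is sharper than the paper's. The paper cites Proposition~\ref{p:ah}(ii), which gives strict monotonicity of $t\mapsto c(z,t)$ and hence, strictly speaking, only $\partial_t c\le 0$. Your linearized problem for $v=\partial_h c_h$ gives the strict sign, and your maximum-principle step is fine. It is not even needed, though. Since $c_h=\phi(\cdot\,;s(h),\kappa(s(h)-c_0))$, you have $\partial_h c_h=s'(h)\,w$, where $w>0$ is your linearized solution with $w(0)=1$, $w'(0)=\kappa$. Also $s'(h)=-\partial_hG/\partial_sG=-\lambda(c_h(h))/w'(h)<0$, so $\partial_h c_h<0$ on $[0,h]$ immediately.
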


\begin{proof} We first prove the existence and uniqueness of 
$h(t)\in C^1([0,\infty))$. 
For any $h>0$, there exists a unique
$c_h = c_h(z)\ge 0$ $(-a \le z \le h)$ that satisfies the time-independent version of the 
first four equations \eqref{1dsemifinal-c-expanded}--\eqref{1dsemifinal-BC_at_a-expanded} of the system
\eqref{1dsemifinal-h-expanded}, with $c_h$ linear on $[-a, 0]$, $c_h|_{[0, h]} \in C^\infty ([0, h])$, 
$c_h$ strictly decreasing, 
and  $0 < c_h(z) < c_0$ for all $z \in (-a, h]$; cf.\ Proposition~\ref{p:1Dc}. 
The initial-value problem for $h = h(t)$ can be written as $h' = G(h)$ with $h(0) = h_0 > 0,$ where by 
\eqref{1dsemifinal-h-expanded}, \eqref{simplification-c}, and \eqref{simplification-h}
\[
G(h) = -\kappa (c_h(0) - c_0) = -c_h'(0^+) = \int_0^h \lambda (c_h(z)) \, dz. 
\]
Let $0 < h_1 < h_2$. By Proposition~\ref{p:ah}, 
$c_{h_1}(z) > c_{h_2}(z)$ if $z \in [-a, h_1].$ In particular, $c_{h_1}(0) > c_{h_2}(0)$ and hence
$G(h_1) < G(h_2).$ Consequently, 
\begin{align}
\label{GG}
 |G(h_2) - G(h_1) | & = G(h_2) - G(h_1) \\
 &= \int_0^{h_2} \lambda (c_{h_2}(z))\, dz -  \int_0^{h_1} \lambda (c_{h_1} (z))\, dz \\
& = \int_{h_1}^{h_2} \lambda (c_{h_2}(z))\, dz + \int_0^{h_1}
\left[ \lambda (c_{h_2}(z)) - \lambda (c_{h_1}(z) )\right] dz\nonumber \\
& \le \int_{h_1}^{h_2} \lambda (c_{h_2}(z))\, dz \nonumber \\
& \le |h_2 - h_1|.
\end{align}
Therefore, $G$ is a globally Lipschitz-continuous function on $[0, \infty).$ Thus, 
the initial-value problem $h'(t) = G(h(t))$ with $h(0) = h_0$ has a unique global
solution $h(t) $ for all $t > 0.$ 

Since $h'(t) = G(h(t)) > 0$ for all $t > 0$, we have 
$h(t) > h_0 > 0$ for all $t > 0$. Moreover, for any $t > 0$, $c(\cdot, t) = c_{h(t)}(\cdot)$. 
By Proposition~\ref{p:1Dc}, 
$c(\cdot, t)$ is linear on $[-a, 0]$, 
$c(\cdot,t)|_{[0, \infty) } \in C^\infty([0,h(t)]) $, and 
$0< c(\cdot,t)< c_0$ and $c(\cdot, t)$ strictly decreases on $(-a, h(t))$ for all $t > 0.$
 Moreover, by  Propositions~\ref{p:ah}, $\partial_t c(\cdot, t) < 0$ on $(0, h(t))$ for all $t > 0.$

It remains now to show that $G$ is a $C^1$-function of $h > 0$, which implies that 
$h(t)\in C^2([0,\infty))$. We proceed in three steps. 

{\it Step 1.} For each $h>0$, the restriction of $c_h$ onto $[0, h]$ satisfies that
$c_h\in C^\infty([0, h])$, $0 < c_h < c_h(0) < c_0$ and $c_h'<0$ on $(0, h)$, and 
\[
c_h'' = \lambda (c_h) \quad \mbox{in } (0, h), \qquad c_h'(0) = \kappa (c_h(0) - c_0), \qquad c_h'(h) = 0; 
\]
cf.\ \eqref{1dfinal-c}--\eqref{1dfinal-BC-at-h}. Define $\hat{c}_h(\hat{z}) = c_h(z)$ with 
$\hat{z} = z/h \in [0, 1]$ and $z \in [0, h]$. Then $\hat{c}_h \in C^\infty([0, 1])$, 
$0 < \hat{c}_h <  \hat{c}_h(0) < c_0$, $\hat{c}_h' < 0$ on $(0, 1)$, and 
\begin{equation}\label{s3:ifp-bvp}
\hat{c}_h'' = h^2 \lambda (\hat{c}_h) \quad \mbox{in } (0, 1), \qquad \hat{c}_h'(0) = h \kappa (\hat{c}_h(0) - c_0), 
\qquad \hat{c}_h'(1) = 0.
\end{equation}
It suffices to show that $\hat{c}_h(0)=c_h(0)=c_0-\kappa^{-1}G(h)$ is a $C^1$-function of $h > 0$. 

{\it Step 2.} We claim the following: for any $h > 0$,  there exists  $\delta > 0$ such 
that the following initial-value problem of a second-order ODE
\begin{equation}\label{s3:ifp-ivp}
\hat{w}_{h,b}''=h^2 \lambda (\hat{w}_{h,b}) \quad\text{in }(0,1],
\qquad \hat{w}_{h,b}(0)=b, \qquad \hat{w}_{h,b}'(0)=h \kappa (b-c_0) 
\end{equation}
has a unique solution 
$\hat{w}_{h, b}\in C^2([0, 1])$ for any $b \in (\hat{c}_h(0) - \delta, \hat{c}_h(0) + \delta)$, and the solution
$\hat{w}_{h,b}$ is positive. 
To prove this claim, we introduce $\hat{v}_{h, b} = \hat{w}_{h,b}'$ and consider the following equivalent initial-value problem
for a system of first-order ODEs: 
\begin{equation}
    \label{wv}
    \left\{
    \begin{aligned}
        & \hat{w}_{h,b}' = \hat{v}_{h, b} \qquad \mbox{on } (0, 1], \\
        & \hat{v}_{h,b}' = h^2 \lambda( \hat{w}_{h, b}) \qquad \mbox{on } (0, 1], 
    \end{aligned}
    \right. \qquad \mbox{and} \qquad 
    \left\{
    \begin{aligned}
        &\hat{w}_{h, b}(0) = b, \\
        &\hat{v}_{h,b}(0) = h \kappa (b-c_0). 
    \end{aligned}
    \right.
\end{equation}


Consider the open box $B = (\hat{c}_h(1)/2, c_0)\times (-h \kappa c_0, 1).$
By Proposition~\ref{p:1Dc}, the
trajectory $\mathcal{T}:=\{(\hat{c}_h(\hat{z}),\hat{c}'_h(\hat{z})):0\leq \hat{z}\leq 1\}$ is contained in $B$. 
Set 
\begin{align*}
   & \varepsilon =  \min \left\{ 1, \frac12 \mbox{dist}({\mathcal T}, \partial B), c_0 - \hat{c}_h(0), \hat{c}_h(0) 
    - \hat{c}_h(1) \right\}>0,\\
&\delta =\dfrac{\varepsilon}{\sqrt{e^{h^2+1}(1+h^2 \kappa^2)}} \in (0, \varepsilon).
\end{align*}
Let  $b \in (\hat{c}_h(0) - \delta, \hat{c}_h(0) + \delta)$. Then, 
the initial value in \eqref{wv} satisfies
$(b, h \kappa (b - c_0)) \in B$. 
Since the right-hand side of the system of differential equations in 
\eqref{wv} is $C^{\infty}$  with respect to the unknowns $(\hat{w}_{h,b},\hat{v}_{h,b})$ in $B$, the solution
$(\hat{w}_{h,b}, \hat{v}_{h,b})$ exists locally. 
Let $[0, \hat{z}_\infty) $ for some $\hat{z}_\infty\in (0, \infty]$ be the maximal interval of the solution. 
It suffices to show the following two claims: 
\begin{compactenum} 
\item[(1)] $\hat{z}_\infty > 1$, which implies the solution 
$\hat{w}_{h, b} $ is defined on $[0, 1]$; and 
\item[(2)] the trajectory $\mathcal S := \{
(\hat{w}_{h, b}(\hat{z}), \hat{v}_{h,b}(\hat{z})): 0 \le \hat{z} \le 1\} $ is contained in $\overline{B},$
which implies that the solution $\hat{w}_{h,b}$ is positive on $[0, 1].$
\end{compactenum}

We will prove these two claims by contradiction. 
Assume that either  $\hat{z}_\infty \le 1$ or the trajectory $\mathcal S$ 
exists but is not entirely contained in $\overline{B}$. In either case, there exists $\hat{z}_*\in (0,1) $ such that the trajectory $\mathcal S$ reaches $\partial{B}$ at $\hat{z}_*\in(0,1)$ in the following sense  
\begin{equation}
\label{hhzstar}
(\hat{w}_{h,b}(\hat{z}_*),\hat{w}'_{h,b}(\hat{z}_*))\in \partial B \quad \mbox{and} \quad 
(\hat{w}_{h,b}(\hat{z}),\hat{w}'_{h,b}(\hat{z}))\in B \quad \mbox{for } 0\leq \hat{z}<\hat{z}_*.
\end{equation}  
Indeed, \eqref{hhzstar} is precisely the negation of the second claim. On the other hand, if $\hat{z}_\infty$ is finite, in particular, $\hat{z}_\infty\leq 1$, then we have that  
\(
\lim\limits_{\hat{z}\to \hat{z}_\infty}\|(\hat{w}_{h,b}(\hat{z}),\hat{w}'_{h,b}(\hat{z})\|_{\R^2}=\infty.
\) 
Therefore, the trajectory $\{
(\hat{w}_{h, b}(\hat{z}), \hat{v}_{h,b}(\hat{z}))\}$ reaches $\partial B$ at some $\hat{z}_*\in(0,1)$ in the sense of \eqref{hhzstar}.

To arrive at a contradiction, we will obtain a priori estimates on $\hat{w}_{h,b}-\hat{c}_h$ for $0\leq \hat{z}\leq \hat{z}_*$. To this end, write 
\[
\hat{w}_{h,b}''-\hat{c}_{h}''=h^2\left(\lambda(\hat{w}_{h,b})-\lambda(\hat{c}_h)\right).
\]
Multiplying this equality by $\hat{w}_{h,b}-\hat{c}_{h}$, noting that $|\lambda (u) - \lambda(v)|\le |u - v|$ 
for any $u, v > 0$, and applying the Cauchy--Schwarz inequality, we  obtain
\begin{align*}
&\dfrac{\text{d}}{\text{d}\hat{z}}\left[|\hat{w}_{h,b}-\hat{c}_h|^2+|\hat{w}'_{h,b}-\hat{c}'_h|^2\right]\\
&\qquad=2(\hat{w}_{h,b}'-\hat{c}_h')(\hat{w}_{h,b}-\hat{c}_h)
+2 h^2(\lambda(\hat{w}_{h,b})-\lambda(\hat{c}_h))(\hat{w}_{h,b}'-\hat{c}_h')
\\
&\qquad \le (h^2+1) \left[|\hat{w}_{h,b}-\hat{c}_h|^2+|\hat{w}'_{h,b}-\hat{c}'_h|^2\right]
\qquad \forall \hat{z} \in [0, \hat{z}_\ast]. 
\end{align*}
By the Gronwall inequality, we get
\begin{align*}
 &|\hat{w}_{h,b}(\hat{z})-\hat{c}_h(\hat{z})|^2+|\hat{w}'_{h,b}(\hat{z})-\hat{c}'_h(\hat{z})|^2
 \nonumber \\
 &\qquad \leq e^{(h^2+1)\hat{z}}\left[|\hat{w}_{h,b}(0)-\hat{c}_h(0)|^2+|\hat{w}'_{h,b}(0)-\hat{c}'_h(0)|^2\right]
 \nonumber \\
 &\qquad \leq e^{(h^2+1)}(1+\kappa^2 h^2)\delta^2\nonumber \\
 &\qquad = \varepsilon^2 
 \qquad \forall \hat{z} \in [0, \hat{z}_\ast].
\end{align*}
In particular, 
\[
|\hat{w}_{h,b}(\hat{z}_*)-\hat{c}_h(\hat{z}_*)|^2+|\hat{w}'_{h,b}(\hat{z}_*)-\hat{c}'_h(\hat{z}_*)|^2\leq \varepsilon^2.
\]
This contradicts the fact that
$(\hat{w}_{h,b}(\hat{z}_*),\hat{w}'_{h,b}(\hat{z}_*))\in \partial B$ and $\text{dist}(\mathcal{T},\partial B) \ge 2\varepsilon$.

{\it Step 3.} 
We define
\[
\mathcal{F}(h,b) :=\hat{w}_{h,b}'(1),
\]
where $\hat{w}_{h,b}\in C^2([0, 1])$ is the (positive) solution to \eqref{s3:ifp-ivp}.
By the standard theorems about continuous dependence of ODE solutions on initial conditions and parameters
(cf.\ \cite[Chapter 5]{hartman2002ordinary}, \cite[Chapter 3.5]{brauer1989qualitative}, and \cite[Chapter 2.3]{Perko2001}), 
we deduce that, given arbitrary $h_*>0$, there exist $\delta_1>0$ and $\delta_2>0$ such that $\hat{w}_{h,b}'(1)$ exists and continuously differentiable for all $(h,b)\in I_{\delta_1,\delta_2}$, where
\[
I_{\delta_1, \delta_2} :=\{ (h, b) \in \R^2: |h-h_*|<\delta_1\ \mbox{and} \  |b-\hat{c}_{h_*}(0)|<\delta_2\}.
\]
Hence, $\mathcal F \in C^1(I_{\delta_1,\delta_2})$. 
Moreover, by the definitions of $\mathcal{F}$ and $\hat{c}_h(\hat{z})$ we have 
\[
\mathcal{F}(h,\hat{c}_h(0))=0 \qquad \forall h >0.
\]
Finally, we show that $\partial_b\mathcal{F}(h_*,b)\neq 0$ with $b=\hat{c}_h(0)$. Indeed, by \eqref{s3:ifp-ivp}, the function $u:= \partial_b \hat{w}_{h_*,b}$ satisfies 
\[
u''=A(\hat{z})u \text{ on }(0,1], \qquad u(0)=1, \qquad u'(0)=\kappa h_*. 
\]
with 
$A(\hat{z}) = h^2_* \lambda'(\hat{w}_{h_*, b}(\hat{z}))\ge A_0$ on $[0, 1]$ for some constant $A_0 > 0,$
and both $u(0)$ and $u'(0)$ are strictly positive. Then, simply due to the continuity of $u$, 
we have $u(\hat{z})>0$ at least for some small interval $\hat{z}\in (0,\hat{z}_0)$. 
However, due to $u''=A(\hat{z})u$, we also have  $u''(\hat{z})>0$ on this interval, 
so $u'(\hat{z})>u'(0)=\kappa h_*$ for $\hat{z}\in (0,\hat{z}_0)$ and both $u(\hat{z})$ 
and ${u}'(\hat{z})$ can only grow staying strictly positive.  Therefore, $u'(\hat{z}) > 0$ for all $\hat{z}\in [0, 1]$. 
Hence, 
\[
\partial_b \mathcal{F}(h_\ast, \hat{c}_{h_\ast}(0))
= u'(1) > 0
\qquad \forall\, h_*>0.
\]
Consequently, we are in a position to apply the implicit function theorem and 
conclude that $h\mapsto \hat{c}_h(0)$ is a $C^1$-function of $h$.
Hence, $G \in C^1((0, \infty)).$
\end{proof}

\begin{theorem}
    \label{t:1Ddynamics}  Let \((c(z,t),h(t))\) with $h\in C^2([0,\infty))$ and 
$c(\cdot,t)\in C^\infty([h_0,h(t)])$  for all $t \geq 0$ be the solution of \eqref{vfbp-nondim-expanded}, given $a$, $d_0$, $c_0$, 
and $h_0,$ with $\kappa = d_0/a.$ Let $c_{a, \infty} = c_{a, \infty}(z)$ $(z \ge -a)$ be the solution 
to \eqref{ss-vfbp-nondim-expanded}, given in Proposition~\ref{p:1Dasymp}.
    \begin{compactenum}
    \item[{\rm (i)}] We have $h'(t) > 0$ and $h''(t) > 0$ for any $t > 0$. Moreover, 
    \begin{align} 
    \label{3lim-1}
    &\lim_{t\to \infty} c(0, t) = c_{a, \infty}(0), \\
    \label{3lim-2}
    &
    \lim_{t\to \infty}h'(t) = \kappa(c_0-c_{a, \infty}(0)). 
    \end{align}
\item[{\rm (ii)}] 
Assume $c_0>1$. Then there exists $t_0\geq 0$ and $z_*:[t_0,\infty)\to(-a,h(t)]$ 
such that $c(z_*(t),t)~=~1$ for all $t\geq t_0$. Moreover, 
\begin{equation}\label{zastexp-MP}
|z_*(t)-z^\infty_*|\leq C_3e^{-\gamma_1(t-t_0)} \quad \forall t>t_0,
\end{equation}
where $C_3=C_2e^{\gamma h(t_0)/2}$ and $\gamma_1=\gamma h'(t_0)/2$, with
$C_2$ and $\gamma$ same as in Proposition~\ref{p:exponential}.

\end{compactenum} 
\end{theorem}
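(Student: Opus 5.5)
The plan is to reduce everything to the autonomous ODE $h'(t)=G(h(t))$ from the proof of Theorem~\ref{t:verticalmoving}, where
\[
G(h)=\kappa\bigl(c_0-c_h(0)\bigr)=\int_0^h\lambda(c_h(z))\,dz ,
\]
and then apply the monotonicity and asymptotic results of Propositions~\ref{p:ah}, \ref{p:1Dasymp} and \ref{p:exponential} along the trajectory $h=h(t)$.

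For part (i), first note that $h'(t)=G(h(t))>0$, since $c_h(0)<c_0$ by Proposition~\ref{p:1Dc}(iii). Next, $G$ is $C^1$ (Theorem~\ref{t:verticalmoving}) and strictly increasing in $h$, because $c_{h}(0)$ strictly decreases in $h$ by Proposition~\ref{p:ah}(ii). Hence $h''=G'(h)h'\ge 0$. To get strict positivity I need $G'(h)>0$, not merely monotonicity. I would extract this from the implicit function computation in Step~3 of the previous proof. Differentiating $\mathcal F(h,\hat c_h(0))=0$ gives $\frac{d}{dh}\hat c_h(0)=-\partial_h\mathcal F/\partial_b\mathcal F$, with $\partial_b\mathcal F>0$ already shown. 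It remains to show $\partial_h\mathcal F>0$. The function $\partial_h\hat w_{h,b}$ solves a linearized equation $u''=A u+2h\lambda(\hat w)$ with $u(0)=0$ and $u'(0)=\kappa(b-c_0)<0$. Its sign is not immediately obvious, and this is the main obstacle. As a fallback I would argue more directly. If $G'(h_1)=0$ at some $h_1$, then differentiating the first integral $[c_h'(0)]^2=2\Lambda(c_h(0))-2\Lambda(c_h(h))$ together with $c_h'(0)=\kappa(c_h(0)-c_0)$ shows that $\frac{d}{dh}\Lambda(c_h(h))=0$. Computing $\frac{d}{dh}c_h(h)=\partial_h c_h(h)+c_h'(h)=\partial_h c_h(h)$, this would require $\partial_h c_h\equiv0$ at $z=h$. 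Since $\partial_hc_h$ solves $u''=\lambda'(c_h)u$ with $u(0)=0$ and $u'(0)=\kappa u(0)=0$, ODE uniqueness forces $\partial_hc_h\equiv0$. But then $0=\frac{d}{dh}[c_h'(h)]=c_h''(h)=\lambda(c_h(h))>0$, a contradiction.

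For the limits, $h$ is increasing with $h'\ge G(h_0)>0$, so $h(t)\to\infty$. Proposition~\ref{p:1Dasymp}(ii)--(iii) gives $c(0,t)=c_{h(t)}(0)\to\mu=c_{a,\infty}(0)$, which is \eqref{3lim-1}. Then \eqref{3lim-2} follows from $h'=\kappa(c_0-c(0,t))$.

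For part (ii), take $\nu=1<c_0$ in Proposition~\ref{p:exponential}(iii). Choose $t_0$ as the first time with $h(t_0)> h_1$ (that is, $h_\nu$ with $\nu=1$); this exists because $h(t)\to\infty$. For $t\ge t_0$, Proposition~\ref{p:exponential}(iii) provides a unique $z_*(t)=z_*^{h(t)}$ with $c(z_*(t),t)=1$, and
\[
|z_*(t)-z_*^\infty|\le C_2e^{-\gamma h(t)/2}.
\]
By convexity of $h$ (part (i)), $h(t)\ge h(t_0)+h'(t_0)(t-t_0)$ for $t\ge t_0$. Substituting this gives \eqref{zastexp-MP} with an exponential rate $\gamma h'(t_0)/2$ in $t-t_0$; the constant in front comes out as $C_2e^{-\gamma h(t_0)/2}$, which is at most the stated $C_3=C_2e^{\gamma h(t_0)/2}$. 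Besides the strictness of $h''$, the only delicate bookkeeping is ensuring $h(t_0)>h_\nu$ strictly, so that Proposition~\ref{p:exponential}(iii) applies for all $t\ge t_0$.
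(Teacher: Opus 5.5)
Your proof is correct. Its skeleton matches the paper's: the autonomous ODE $h'=G(h)$, Proposition~\ref{p:ah} for monotonicity, Proposition~\ref{p:1Dasymp} for the limits, and Proposition~\ref{p:exponential}(iii) plus convexity of $h$ for the rate. Where you depart from the paper, you are more careful than it is, in three places.

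First, the paper gets $h''>0$ only from the strict increase of $h'(t)=\kappa(c_0-c(0,t))$, and that alone yields only $h''\ge 0$. Your argument closes this gap: $G'(h_1)=0$ would give $\partial_h c_h$ zero Cauchy data at $z=0$, hence $\partial_h c_h\equiv 0$ and $0=\frac{d}{dh}[c_h'(h)]=\lambda(c_h(h))>0$. The first-integral step in that argument is not needed. The obstacle you flagged also dissolves. If $W(z,b)$ solves $W''=\lambda(W)$ with $W(0)=b$ and $W'(0)=\kappa(b-c_0)$, then $\hat w_{h,b}(\hat z)=W(h\hat z,b)$, so $\mathcal F(h,b)=hW'(h,b)$. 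Therefore $\partial_h\mathcal F(h,\hat c_h(0))=c_h'(h)+h\lambda(c_h(h))=h\lambda(c_h(h))>0$, which gives $\frac{d}{dh}c_h(0)=-\partial_h\mathcal F/\partial_b\mathcal F<0$ immediately.

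Second, in part (ii) the paper takes $t_0$ with $c(h(t_0),t_0)=1$. At that time $h(t_0)\lambda(1)<G(h(t_0))<\kappa(c_0-c_{a,\infty}(0))$, i.e.\ $h(t_0)<h_\nu$, so Proposition~\ref{p:exponential}(iii) is not available for $t$ near that $t_0$. Your requirement $h(t_0)>h_\nu$ is the right one. Just take any such $t_0$, since there is no \emph{first} time at which a strict inequality holds.

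Third, you are right that the resulting prefactor is $C_2e^{-\gamma h(t_0)/2}$. The paper's $C_3$ has the wrong sign in the exponent, though the stated bound is still true.
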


\begin{proof} 
{Part (i).} By \eqref{1dfinal-h} and Proposition~\ref{p:1Dc} (iii),
we have $h'(t) > 0$ for any $t > 0$. 
Thus $h(t_1) < h(t_2)$ if $0 < t_1 < t_2$. Consequently, since $c(z, t) = c_{a, h(t)}(z)$ for any $z \in [-a, h(t)]$, 
we get by Proposition~\ref{p:ah} (ii)  that 
 $c(0, t_1) > c(0, t_2)$. Thus, by \eqref{1dfinal-h}, 
$h'(t_1) < h'(t_2)$. Hence  $h''(t)> 0$ for any $t > 0$.


If we fix $\tilde{t}> 0$ and note that $c(0, \tilde{t})< c_0$, then we have by \eqref{1dfinal-h} that 
$h'(t) \ge \kappa (c_0 - c(0, \tilde{t})) > 0$ for all $t \ge \tilde{t}. $ Thus, 
$h(t)\to \infty$ as $t\to \infty$. Hence, by Proposition~\ref{p:1Dasymp} (i), we obtain
  $c(0, t) = c_{a, h(t)}(0) \to c_{a, \infty}(0)$, implying
  \eqref{3lim-1}. This and \eqref{1dfinal-h} then imply \eqref{3lim-2}. 



{Part (ii).} Note that $c(z, t) = c_{a,h(t)}(z)$ for all $z \in [-a, h(t)]$ for all $t > 0.$
The existence of $t_0 > 0$ such that 
$c(h(t_0), t_0) = 1$ follows from that fact that $c(z,t)$ monotonically decreases
as $t$ increases, $c(0, t) \ge c_{a, \infty}(0)$ for all $t > 0$, $h(t) \to \infty$ and 
$c(h(t), t) \to 0$ as $t \to \infty,$ 
 and Proposition~\ref{p:exponential}. The existence and uniqueness of
 $z_\ast(t)\in (-a,h(t)]$ for each $t > t_0$ such that $c(t, z_\ast(t)) = 1$ follows from
Proposition~\ref{p:exponential}. 

 Assume $c_0>1$. From $h''(t)>0$, we obtain that $h'(t)>h'(t_0)>0$ for $t>t_0$. Then by integration we get  
\[
h(t)>h'(t_0)(t-t_0)+h(t_0) \qquad \forall t > t_0.
\]
Applying \eqref{exponential-decay-of-z_star} 
to $h=h(t)$, we then conclude 
\[
|z_\ast (t)-z_{\ast}^{\infty}|  \le C_2 e^{\gamma h(t_0)/2} e^{-\gamma h'(t_0)(t-t_0)/2} \qquad \forall t > t_0, 
\]
where $C_2$ and $\gamma$ are given in Proposition~\ref{p:exponential}. This leads to \eqref{zastexp-MP}.
\end{proof}


\begin{rmk} 
If $c_0< 1$, 
then neither $z_*(t)$ nor $z_*^{\infty}$ exist for all $t>0$, since both $c(z, t)$ and $c_\infty(z)$
are less than $c_0.$
If $c_0>1$, then Theorem~\ref{t:1Ddynamics} (ii) applies. If $c_\infty(0)\geq 1$, then $z_*(t)>z_\infty\geq 0$ for all $t\geq t_0$. 
In this case, the nutrient concentration reaches the Monod constant 
level (see also Remark~\ref{rmk-z-star}) in the colony region $[0, h]$.
Theorem~\ref{t:1Ddynamics} above implies that this threshold concentration for effective cell growth is reached
within the bacterial colony only during a finite time interval, predicting the vertical slowdown as experimentally observed.
\end{rmk}

\section{A reduced model for radial expansion}
\label{s:disk}

\subsection{Model formulation}
\label{ss:diskmodel}

In this section, we consider a reduction of the original model described in
Section~\ref{s:ContinuumModel}, focusing on the radial
expansion of the bacterial colony. Our two main simplifying assumptions are as follows: 
First, we neglect the vertical coordinate $z$, thereby assuming that the
bacterial colony is a disk with a time-dependent radius $R(t)$.
This assumption of flatness is appropriate for a monolayer colony during an early stage of growth.
 Second, since the colony is much smaller than the substrate in size, 
we assume that $W = \infty$, allowing the colony to grow without bound. 
Both nutrient concentration $c$ and pressure $p$ are functions of $r=\sqrt{x^2+y^2}$ (we drop prime from notations of radius vector)
and  time $t$.  Analogously to the vertical growth from Section~\ref{s:1D}, we shall
consider the quasi-stationary  dynamics, 
i.e., setting the $\eta$-terms  in \eqref{eq:nondimensional_model}(a,b) to be $0$. 

Repeating arguments of Section~\ref{ss:3Dmodel}, we reduce  
\eqref{eq:nondimensional_model} to the following set of differential equations and boundary conditions:  
\begin{subequations}\label{vfbp2d-dim}
\begin{empheq}[left=\empheqlbrace]{align}
&D_+\Delta_r c = \alpha \lambda_B\dfrac{c}{c+K}, & & 
\mbox{for } r\in (0,R(t)) \text{ and }t>0,\label{vfbp2d-dim-eq-for-c}\\
& \qquad \partial_rc(0,t)=0 \text{ and }c(R(t),t) = c_0, & & 
\mbox{for } t>0,\label{vfbp2d-dim-bc-for-c}\\
& \Delta_r p = -\dfrac{\lambda_B}{\xi}\dfrac{c}{c+K}, & & 
\mbox{for } r\in (0,R(t)) \text{ and }t>0, \label{vfbp2d-dim-eq-for-p}\\
& \qquad \partial_rp(0,t)=0 \text{ and }p(R(t),t) = 0, & & 
\mbox{for } t>0,\label{vfbp2d-dim-bc-for-p}\\
& R'(t) = - \xi\partial_r p (R(t), t), 
& & \mbox{for } t>0,\label{vfbp2d-dim-eq-for-R}\\
& \qquad R(0)=R_0.\label{vfbp2d-dim-ic-for-R}
\end{empheq}
\end{subequations}
Here, the parameters $D_+$, $\alpha=\rho/Y$, $\lambda_B$, $K$ and $\xi$ have 
the same meaning as in Subsection~\ref{ss:1Dmodel}. 
Also, $\Delta_r u(r)= r^{-1}\partial_r\left(r\partial_r u(r)\right)$ is the Laplacian in 
polar coordinates for a radially symmetric function $u=u(r)$. 
Conditions at $r=0$ in \eqref{vfbp2d-dim-bc-for-c} 
and \eqref{vfbp2d-dim-bc-for-p} results from the radial symmetry of $c$ and $p$, respectively. 


Let us introduce the dimensionless variables, 
\[
\hat{r}=\dfrac{r}{\mathcal{R}}, \quad 
\hat{R}=\dfrac{R}{\mathcal{R}}, \quad 
\hat{c}=\dfrac{c}{K}, \quad 
\hat{t}=\dfrac{t}{\mathcal{T}}, \quad 
\hat{p}=\dfrac{p}{\mathcal{P}}.
\]
with the following characteristic scales
\[
\mathcal{R} = \sqrt{\dfrac{KD_+}{\alpha \lambda_B}},\qquad \mathcal{P}=\dfrac{\lambda_B \mathcal{R}^2}{\xi},\qquad \mathcal{T}=\dfrac{1}{\lambda_B}.
\]
Without using new notations, we obtain the following system of equation in the dimensionless form: 
\begin{subequations}\label{vfbp2d-nondim}
\begin{empheq}[left=\empheqlbrace]{align}
&\Delta_{r} c = \dfrac{c}{c+1} & & 
\mbox{for } r\in (0,R(t)) \text{ and }t>0,\label{vfbp2d-nondim-eq-for-c}\\
& \qquad \partial_rc(0,t)=0 \text{ and } c(R(t),t) = c_0 & & 
\mbox{for } t>0,\label{vfbp2d-nondim-bc-for-c}\\
& \Delta_{r} p = -\dfrac{c}{c+1} & & 
\mbox{for } r\in (0,R(t)) \text{ and }t>0, \label{vfbp2d-nondim-eq-for-p}\\
& \qquad \partial_rp(0,t)=0 \text{ and }p(R(t),t) = 0 & & 
\mbox{for } t>0,\label{vfbp2d-nondim-bc-for-p}\\
& R'(t) = - \partial_{r} p (R(t), t) & & 
\mbox{for } t>0,\label{vfbp2d-nondim-eq-for-R} \\
& \qquad R(0)=R_0.\label{vfbp2d-nondim-ic-for-R}
\end{empheq}
\end{subequations}
Note by \eqref{vfbp2d-nondim-eq-for-c} and \eqref{vfbp2d-nondim-eq-for-p} that the function $c+p$ is harmonic. Moreover, it follows from \eqref{vfbp2d-nondim-bc-for-c} and \eqref{vfbp2d-nondim-bc-for-p} that the function $c+p$ satisfies the Dirichlet boundary condition $c+p=c_0 $ at $r = R$. 
 Hence, $c+p = c_0$ on the disk $r < R(t)$ for all $t > 0$. 
Eliminating the pressure $p = c_0 - c$, we obtain the following compact form of the dimensionless system: 
\begin{subequations}\label{vfbp2d-nondim-compact}
\begin{empheq}[left=\empheqlbrace]{align}
&\Delta_{r} c = \dfrac{c}{c+1} & & 
\mbox{for } r\in (0,R(t)) \text{ and }t>0,\label{vfbp2d-nondimc-eq-for-c}\\
& \qquad \partial_rc(0,t)=0 \text{ and } c(R(t),t) = c_0 & & 
\mbox{for } t>0,\label{vfbp2d-nondimc-bc-for-c}\\
& R'(t) = \partial_{r} c(R(t), t) & & 
\mbox{for } t>0,\label{vfbp2d-nondimc-eq-for-R}\\
& \qquad R(0)=R_0.\label{vfbp2d-nondimc-ic-for-R}
\end{empheq}
\end{subequations}

 \subsection{Numerical simulations and perturbation analysis}
\label{ss:numersimul-disk}

We solved numerically the system of equations \eqref{vfbp2d-nondim-compact}
and summarized our results in Figure~\ref{fig:1D}. We observe
from Figure~\ref{fig:1D} (a) that the nutrient 
concentration $c$ is a monotonically increasing function of $r$ for a fixed $t$. 
The concentration at the center of the colony $c(0, t)$ decreases to $0$ as 
$t\nearrow\infty$. 
In Figure~\ref{fig:1D} (b), we compare the solution $c(r, t)$ at 
$t = T_2$ for some $T_2 > 0$ with the solution $\bar{\bar{c}}(R(T_2) - r)$ which 
is obtained from our asymptotic analysis (cf.\ \eqref{bar_bar_C}). A good agreement 
is reached. 
Figure~\ref{fig:1D} (c) shows 
that the radius
$R(t)$ grows linearly as time evolves. 

 \begin{figure}[tp]
\begin{center}
\includegraphics[width=\textwidth]{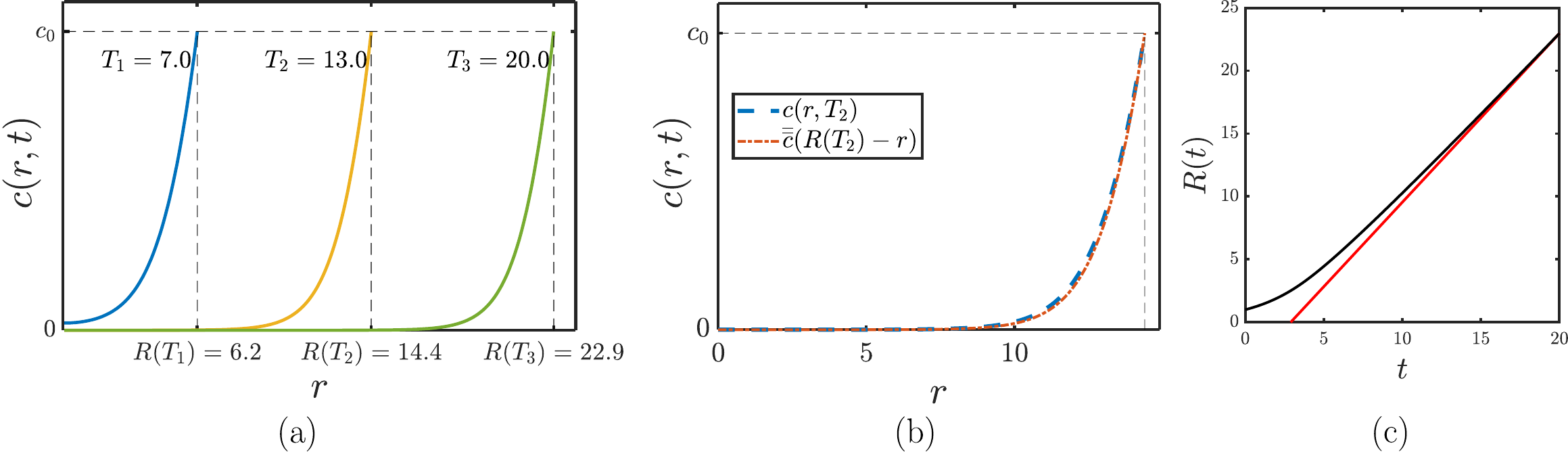}
\end{center}
\vspace{-4 mm}

\caption{ 
Results of the numerical simulation of the system \eqref{vfbp2d-nondim-compact} with
 $c_0=2$ and $R(0)=1.0$. 
(a) Concentration profiles $c(r,t)$ for  $t = T_1,T_2,T_3$.
 (b) Comparison between the concentration profile $c(r,t)$  and its approximation via \eqref{approx_C_R} for $t=T_2$. (c) The colony radius
$R(t)$ (black solid line) and the slope 
$-\partial_z\bar{\bar{c}}(0^+)$
(red line; namely, it is plot for $R=R(20)-\partial_z\bar{\bar{c}}(0^+)(t-20)$) vs.\ time $t$.}
\label{fig:1D}
\end{figure} 

We now use the standard perturbation analysis \cite{holmes2012introduction} to address the large-time (or, equivalently, large-$R$) behavior of the solution to the system \eqref{vfbp2d-nondim-compact}. 
 Introduce new variable $\xi := r/R$ and parameter $\varepsilon = R^{-2}$. Then the 
 time-independent version of the 
 problem \eqref{vfbp2d-nondimc-eq-for-c}--\eqref{vfbp2d-nondimc-bc-for-c}, 
 with $c(r)$ and $R$ replacing $c(r, t)$ and $R(t)$, respectively, is rewritten for $\bar{c}(\xi)= c(R\xi)$ as 
 \begin{equation}
 \label{barbarc}
 \left\{
 \begin{array}{l}
 \varepsilon\left(\xi  \bar{c}'(\xi) \right)' = 
 \xi \lambda(\bar{c}(\xi)) \qquad \mbox{for } 0<\xi<1, \\
 \bar{c}'(0) = 0 \qquad \mbox{and} \qquad 
 \bar{c}(1)  = c_0.
 \end{array}
 \right.
 \end{equation}
This is a singularly perturbed problem with vanishing outer solution that cannot satisfy the boundary condition at $\xi = 1$.
Therefore, similar to the analysis in Subsection~\ref{ss:1Dsimulation}, we represent the solution $\bar{c}$ as the sum of the inner (boundary layer) solution of the fast variable $\eta:=\varepsilon^{-\gamma}(1-\xi)$ for some $\gamma>0$ and the outer solution, depending regularly on both $\xi$ and $\varepsilon$. The inner and outer solutions are coupled with the matching conditions: the limit as $\eta\to \infty$ of the inner solution equals the limit as $\xi\to 1$ of the outer solution. As in Subsection~\ref{ss:1Dsimulation}, we find
that the outer solution vanishes.

To determine the value of $\gamma$, we substitute
$\bar{\bar{c}}(\eta) = \bar{c}(\xi) = \bar{c}(1-\varepsilon^{\gamma}\eta)$  into \eqref{barbarc} to derive the equation $\bar{\bar{c}}(\eta)$ 
\begin{equation*}
\varepsilon^{1-2\gamma}\partial^{2}_{\eta}\bar{\bar{c}}-\varepsilon^{1-\gamma}\left(\eta \partial^2_\eta \bar{\bar{c}}+ \partial_\eta \bar{\bar{c}}\right)=\lambda (\bar{\bar{c}}) -\varepsilon^{\gamma} \eta \lambda(\bar{\bar{c}}).
\end{equation*}
To have a non-trivial equation for $\bar{\bar{c}}$ at the leading order of $\varepsilon$, we choose $\gamma = 1/2$, and then the boundary value-problem for $\bar{\bar{c}}$ is 
\begin{equation}\label{bar_bar_C}
\left\{
\begin{array}{l}
\bar{\bar{c}}''= \lambda(\bar{\bar{c}})  \qquad 
\mbox{for } \eta>0,\\
\bar{\bar{c}}(0)  = c_0 \qquad 
\mbox{and} \qquad 
\bar{\bar{c}}(\eta)\to 0 \quad \text{as }\eta \to \infty.
\end{array}
\right.
\end{equation}
Here, the condition at $\eta\to\infty$ is derived from  the matching condition. 
Noting that 
\[
\eta = \varepsilon^{-1/2}(1-\xi) = R \left( 1-\frac{r}{R} \right)=R-r,
\]
we have an approximation of the solution of 
\eqref{vfbp2d-nondimc-eq-for-c}--\eqref{vfbp2d-nondimc-bc-for-c} for large $R$:
\begin{equation}\label{approx_C_R}
c(r) \approx \bar{\bar{c}}(R-r). 
\end{equation}
Here, $\bar{\bar{c}}(\eta)$ is independent of $R$. 
To examine
the inner solution $\bar{\bar{c}}(\eta)$,
we replace $\lambda(\bar{\bar{c}})$ by $\bar{\bar{c}}$. Then the exact solution for \eqref{bar_bar_C} is $\bar{\bar{c}}(\eta)=c_0e^{-\eta}$.
The expression \eqref{approx_C_R} gives the exact large-time (equivalently, large $R$) asymptotic for $\partial_rc(R,t)$: 
\begin{equation}\label{exact_asymptotics}
\lim\limits_{R\to\infty}\partial_rc(R,t)=-\bar{\bar{c}}'(0).
\end{equation}
The formula \eqref{exact_asymptotics} with \eqref{vfbp2d-nondimc-eq-for-R} implies that the bacterial colony grows linearly for large $R$, and the linear growth rate is $-\bar{\bar{c}}'(0)$.
We note that the numerical results are in the agreement with the approximation formula \eqref{approx_C_R}, 
and the growth rate $R'(t)$ is close to $-\bar{\bar{c}}'(0+)$; cf.\ Figure~\ref{fig:1D}~(c). 


\subsection{Profile of the nutrient concentration}
\label{ss:Monotonicity}

 In this subsection, we consider 
 the following system which is the time-independent version of 
 the system of equations \eqref{vfbp2d-nondimc-eq-for-c} and \eqref{vfbp2d-nondimc-bc-for-c}: 
\begin{equation}\label{eq:radial-simple}
\left\{
\begin{array}{l}
(rc'(r))' =  r \lambda (c(r)) \qquad \mbox{for } 0 < r < R,\\
c'(0) = 0 \qquad \mbox{and} \qquad 
c(R) =c_0.
\end{array}
\right.
\end{equation}
Here, $R>0$ is a given fixed number and $c_0 > 0$ is the given boundary value of the nutrient concentration. 
We shall consider a nonnegative solution $c = c(r)$ to \eqref{eq:radial-simple}. 

\begin{definition}
    \label{d:radialsol}
    A function $c = c(r)$ $(0 \le r \le R)$ is a solution to the boundary-value problem \eqref{eq:radial-simple},
    if $c \in C^2((0, R))\cap C^1([0, R])$ and $c$ satisfies \eqref{eq:radial-simple}.
\end{definition} 



\begin{theorem}\label{thm-mono}
Given $R> 0$ and $c_0 > 0$, there exists a unique  nonnegative solution to the boundary-value problem 
\eqref{eq:radial-simple}. Moreover, $c \in C^\infty((0, R]) \cap C^2([0, R])$, 
and  $0 < c(r) < c_0$ and $c'(r) > 0$
 for all $r \in (0, R).$
\end{theorem}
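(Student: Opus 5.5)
Existence comes from the variational method of Theorem~\ref{t:3Dminimizer}, specialized to the disk. On $B_R=\{|\br|<R\}\subset\R^2$ we minimize $\mathcal{E}[u]=\int_{B_R}\bigl[\tfrac12|\nabla u|^2+\Lambda(u)\bigr]d\br$ over $u\in H^1(B_R)$ with $u\ge 0$ and $u=c_0$ on $\partial B_R$. This is the pure Dirichlet case with $\Omega=\Omega_+$; the proof of Theorem~\ref{t:3Dminimizer} carries over verbatim. It gives a unique minimizer, which is also the unique nonnegative weak solution of $\Delta c=\lambda(c)$ in $B_R$ with $c=c_0$ on $\partial B_R$.

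Since $\mathcal{E}$ and the constraint set are rotation invariant, uniqueness forces the minimizer to be radial, $c(\br)=c(r)$. Interior elliptic regularity with bootstrapping (the right side $\lambda(c)$ is bounded and smooth in $c$), together with boundary regularity at the smooth circle, gives $c\in C^\infty(B_R)\cap C^2(\overline{B_R})$. For the radial profile this yields $c\in C^\infty((0,R])\cap C^2([0,R])$, and $c'(0)=0$ by smoothness and symmetry at the origin. Hence $c$ solves \eqref{eq:radial-simple} in the sense of Definition~\ref{d:radialsol}.

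For uniqueness among all nonnegative solutions in the sense of Definition~\ref{d:radialsol}, take such a $c$ and lift it to $B_R$. From $(rc')'=r\lambda(c)$ and $c'(0)=0$ we get $rc'(r)=\int_0^r s\lambda(c(s))\,ds$, so $c'(r)=O(r)$, and the lifted function lies in $H^1(B_R)$. Integrating by parts on annuli $\varepsilon<r<R$ shows it is a weak solution: the boundary term at $r=\varepsilon$ is $2\pi\varepsilon c'(\varepsilon)\xi\to0$. The weak uniqueness argument ((ii)$\Rightarrow$(i) in Theorem~\ref{t:3Dminimizer}) then identifies it with the minimizer. Alternatively one can argue directly: the difference $u$ of two solutions satisfies $(ru')'=rA(r)u$ with $A>0$, $u'(0)=0$, $u(R)=0$; multiplying by $u$ and integrating gives $\int_0^R r(u'^2+Au^2)\,dr=0$.

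For the qualitative properties, I first show $c>0$ on $[0,R]$. If $c(0)=0$, then $c\equiv0$ by uniqueness for the singular initial-value problem $rc'(r)=\int_0^r s\lambda(c)\,ds$, which is a Lipschitz Volterra equation, and this contradicts $c(R)=c_0>0$. If instead $c$ vanished at an interior point, that point would be an interior minimum; the strong maximum principle applied to $\Delta c=\lambda(c)\ge0$ on the disk (as in Step~3 of Theorem~\ref{t:3Dminimizer}) rules this out. Then $\lambda(c)>0$, so $rc'(r)=\int_0^r s\lambda(c(s))\,ds>0$ for $r>0$, giving $c'>0$ on $(0,R)$. Consequently $c$ is strictly increasing, and $c(r)<c(R)=c_0$ for $r<R$.

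The main obstacle is the regularity and equivalence at the singular point $r=0$: matching the radial ODE formulation with $c\in C^1([0,R])$ to the two-dimensional weak formulation, and obtaining $C^2$ up to $r=0$. I will handle this by working on the disk, where the origin is an ordinary interior point and standard elliptic regularity applies, using the integral identity above to pass between the two formulations.
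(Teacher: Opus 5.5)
Your proposal is correct in outline and has the same overall structure as the paper. Both arguments use the direct method on $B_R$, pass to the radial profile, and integrate $(rc')'=r\lambda(c)$ to get $c'>0$. Your energy identity $\int_0^R r(u'^2+Au^2)\,dr=0$ for uniqueness is exactly the paper's argument.

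The real difference is how you obtain radial symmetry and regularity at $r=0$. The paper defines $c$ as the circle average of the minimizer $u$ and computes $c'(r)$ as an averaged flux. It then gets $c'(0^+)=0$ and the existence of $c''(0^+)$ by a Taylor expansion of $\nabla u$ at the origin. Finally it proves $\mathcal{E}_R[c]\le\mathcal{E}_R[u]$ by Cauchy--Schwarz and Jensen's inequality for the convex $\Lambda$, so uniqueness of the minimizer gives $u(x)=c(|x|)$.

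Your argument is shorter. For every rotation $Q$, $u\circ Q$ is admissible with the same energy, so $u\circ Q=u$. This makes the origin an ordinary interior point of the disk. Then $c'(0)=0$ and $C^2$ (indeed $C^\infty$) regularity of the profile at $r=0$ come directly from interior elliptic regularity and evenness of $u$. The paper's route is more hands-on: it yields the flux formula for $c'(r)$ explicitly, at the cost of the computations at $r=0$. Your Volterra/Gronwall argument excluding $c(0)=0$ is also correct and purely ODE-based.

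One step needs repair. The strong maximum principle for $\Delta c=\lambda(c)\ge 0$ (subharmonicity) rules out interior maxima, not interior minima. So, as you state it, it cannot exclude an interior zero of a nonnegative $c$; for example, $|x|^2$ is subharmonic with interior minimum $0$. There are two easy fixes:
\begin{itemize}
\item Write the equation as $\Delta c-a(x)c=0$ with $a=1/(1+c)\ge 0$, and apply the strong maximum principle for a nonpositive interior minimum, as the paper does.
\item Note that $rc'(r)=\int_0^r s\lambda(c(s))\,ds\ge 0$ already makes $c$ nondecreasing. An interior zero would then force $c(0)=0$, which your Volterra argument excludes.
\end{itemize}
With the second fix, your positivity and monotonicity proof avoids the maximum principle entirely.
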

\begin{proof} 
We denote 
\begin{align*}
&B_r = \left\{ x = (x_1, x_2) \in \R^2: |x| = \sqrt{x_1^2 + x_2^2} < r\right\} \qquad \mbox{if } r > 0; \\
&\mathcal A_{R} = \{ u \in H^1(B_R): u = c_0 \mbox{ on }  \partial B_R\}; \\
&
\mathcal E_R[u] = \int_{B_R} \left[ \frac12 |\nabla u |^2 + \Lambda( u) \right] dx
\qquad \forall u \in H^1(B_R). 
\end{align*}
By the direct method, we infer the existence of a unique nonnegative minimizer $u $ of $\mathcal E_R$ over 
$\mathcal A_R;$  cf.\ the proof of Theorem~\ref{t:3Dminimizer}. The minimizer satisfies 
\begin{equation}
\label{uuu}
\Delta u = \lambda (u) \quad  \mbox{in } B_R 
\qquad \mbox{and} \qquad 
u = c_0  \quad \mbox{on } \partial B_R. 
\end{equation}
The elliptic regularity theory \cite{GilbargTrudinger98} implies that 
$u \in C^\infty (\overline{B}_R)$. Since  $\Delta u = a(u) u$, where
$a(u) := 1/(1+u)$, 
it follows from the Strong Maximum Principle 
that $0 < u < c_0$ in $\overline{B}_R$,

Define $c: [0, R] \to \R$ by 
\[
c(r) = \left\{ 
\begin{aligned}
    & u(0)& &  \quad \mbox{if } r = 0, \\
    & \frac{1}{2\pi r} \int_{\partial B_r} u(x) d\ell(x) & & \quad \mbox{if }  0 < r \le R. 
\end{aligned}
\right.
\]
Clearly, $c\in C^2((0, R])\cap C([0, R])$, $c(R) = c_0$, and $0 < c < c_0$ on $[0, R)$. 
 Let $0 <r < R.$ We have 
\begin{align*}
\frac{d}{dr} \int_{\partial B_r} u(x) \, d\ell (x) &= \frac{d}{dr}\int_0^{2 \pi} u(r \cos \theta, r \sin \theta )r\, d\theta
\\
&
=\int_0^{2 \pi} \nabla u(r \cos \theta, r \sin \theta) \cdot (\cos \theta, \sin \theta)\, r\, d\theta+ 
\int_0^{2 \pi} u(r \cos \theta, r \sin \theta )\, d\theta
\\
& =\int_{\partial B_r} \nabla u(x) \cdot \nu (x) \, d \ell (x) + \frac{1}{r} \int_{\partial B_r}
u(x) \, d\ell (x),
\end{align*}
where $\nu(x)$ is the unit normal at $x \in \partial B_r$ exterior to $B_r.$ This leads to 
\begin{align}
\label{cpr}
c'(r) &= \frac{1}{2 \pi r} \int_{\partial B_r} \nabla u(x) \cdot \nu (x)  \, d \ell (x)
\nonumber \\
&= \frac{1}{2 \pi r} \int_{\partial B_r} \left[ \nabla u(x) - \nabla u(0) \right] \cdot \nu (x) 
\, d \ell (x)\nonumber \\
&
\to 0 \quad \mbox{as } r \to 0^+. 
\end{align}
Thus, 
\begin{equation} 
\label{cp0+}
c'(0^+) = \lim_{r \to 0^+} c'(r)= 0
\end{equation}
exists and $c \in C^1([0, R])$. By Taylor's expansion and the fact that 
$x = |x| \nu (x)$ on $\partial B_r$, we get
\begin{align*}
\frac{c'(r)}{r} &= \frac{1}{2 \pi r^2} 
\int_{\partial B_r} \left[ D^2u(0) x \cdot \nu(x) + o(|x|)^2 \right] d\ell (x)
\\
& = o(r) + \int_0^{2 \pi} D^2 u(0) \nu_0(\theta) \cdot \nu_0(\theta) \, d \theta\qquad 
\mbox{as } r \to 0^+,
\end{align*}
where $D^2u$ is the Hessian of $u$ and $\nu_0(\theta) = (\cos \theta, \sin \theta)^T$
with $T$ denoting the transpose. This and \eqref{cp0+} imply that 
\begin{equation}
    \label{cpp0}
 c''(0^+) =    \lim_{r \to 0^+} \frac{c'(r)}{r} = \int_0^{2 \pi} D^2 u(0) \nu_0(\theta) \cdot \nu_0(\theta) \, d \theta. 
\end{equation}
Hence $c \in C^2([0, R]).$

Now,  by integration using polar coordinates, the definition of $c(r)$,  \eqref{cpr}, 
and the Cauchy--Schwarz inequality, we get
\begin{align*}
    \int_{B_R} |\nabla_x ( c(|x|) )|^2 dx 
    = \int_0^R \frac{1}{2 \pi r} \left[ \int_{\partial B_r} \nabla u(x) \cdot \nu (x) \, d\ell (x)
    \right]^2 dr
 \le \int_{B_R} |\nabla u(x)|^2 dx. 
\end{align*}
Moreover, the convexity of $\Lambda$ and Jensen's inequality lead to
\begin{align*}
\int_{B_R} \Lambda (c(|x|)) dx  
=  \int_0^R 2 \pi  \Lambda \left( \frac{1}{2 \pi r} \int_{\partial B_r } u(x) \, d \ell (x) \right) rdr
\le \int_{B_R} \Lambda (u(x))\, dx.
\end{align*}
Therefore, treating $c = c(r) = c(|x|) $ as a function of $x \in B_R$, we have 
$c \in \mathcal A_R$ and $\mathcal E_R[c] \le \mathcal E_R [u].$ The uniqueness of the minimizer
then leads to $u(x) = c(|x|)$ for all $x \in \overline{B_R}.$ Thus, $c \in C^\infty((0, R])$.

Assume $\hat{c} \in C^2((0, R])\cap C^1([0, R]) $ is also a  nonnegative solution to \eqref{eq:radial-simple}. Let $h = c - \hat{c}.$
Then, 
\[
\left\{
\begin{aligned} 
& (r h'(r))' = r \alpha (r) h(r) \qquad \mbox{for } 0 < r < R, \\
& 
h'(0) = 0 \qquad \mbox{and} \qquad h(R) = 0, 
\end{aligned}
\right.
\]
where $\alpha(r) = 1/\left[(1+c)(1+\hat{c})\right]> 0$ on $[0, R]$. Multiplying the equation by $h(r)$, integrating
the resulting equation with respect to $r$ in interval $(\varepsilon,R)$ for $0 < \varepsilon < R$, and using 
integration by parts, we get 
\[
Rh'(R)h(R)-\varepsilon h'(\varepsilon)h(\varepsilon)=\int_\varepsilon^{R}r (h'(r))^2\,\text{d}r+\int_\varepsilon^{R}r \alpha (r) (h(r))^2\,\text{d}r
\]
Using the boundary conditions and sending $\varepsilon \to 0$, we obtain that $h(r) = 0$ for all $r \in [0, R],$ leading
to $\hat{c} = c$ on $[0, R]$.

We finally show that $c'(r) > 0$ for all $r \in (0, R).$
First, we have by \eqref{eq:radial-simple} that 
$(rc'(r))'\ge 0$ for any $r \in (0, R).$ Thus, $m(r):= rc'(r)$ is an increasing function
on $(0, R)$.
Hence, $m(r) \ge m(0) =0$ 
for any $r \in (0, R)$. Thus, $c'(r) \ge 0$ for any
$r \in (0, R).$
Assume that there exists $r_0 \in (0, R)$ such that $c'(r_0)=0$. Integrating both sides of 
$(r c'(r))'= r \lambda (c (r))$ over $[0, r_0]$ and using the fact that $c'(0^+) = 0$, we obtain 
\[
0 = r_0 c'(r_0)=\int_0^{r_0}\zeta \lambda (c(\zeta))\,\text{d}\zeta. 
\]
Since $c \ge 0$ on $[0, R]$, this leads to $c(r)= 0$ for all $ r\in [0, r_0]$. 
Now $u(x) = c(|x|)> 0$ $(x \in B_R)$ satisfies \eqref{uuu}. Hence $\Delta u = \beta u$ in $B_R$ with 
$\beta  = 1/(1  + u) > 0$ in $B_R.$ Since $u = 0$ if $|x| \le r_0$, the Strong Maximum Principle 
implies that $u$ is a constant in $B_R$m contradicting the fact that $u = c_0 > 0$ on $\partial B_R.$
\end{proof}

\begin{proposition}
\label{p:cDisk}
Assume $c_0>0$ and $R > 0.$ Let $c_R = c_R(r)$ $(0 \le r \le R)$ denote the solution of \eqref{eq:radial-simple}
with $c_R$ replacing $c$. Then 
the following hold true: 
\begin{itemize}
\item[{{\rm (i)}}] 
If $0 <  R_1 < R_2$, then
 $c_{R_2}(r)<c_{R_1}(r)$ for all $r \in [0, R_1]$, 
 $c'_{R_2}(r) < c'_{R_1}(r)$ for all $r \in (0, R_1]$, 
 and $R_1 c_{R_1}'(R_1) < R_2 c_{R_2}'(R_2)$; 

\item[{\rm (ii)}] 
If $R_0 > 0$, then
$\lim_{R\to\infty}\max_{0\leq r \leq R_0} c_R(r)=\lim\limits_{R\to\infty} c_R(R_0)=0.$ 
\end{itemize}
\end{proposition}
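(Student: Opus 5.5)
Part (i) will be proved by a comparison argument on $[0,R_1]$, and part (ii) by comparing $c_R$ with an explicit subsolution-type bound built from the outward growth of $c_R$.

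\emph{Part (i).} Let $u=c_{R_1}-c_{R_2}$ on $[0,R_1]$. Then $(ru')'=r\alpha(r)u$ with $\alpha=1/[(1+c_{R_1})(1+c_{R_2})]>0$ and $u'(0)=0$. By Theorem~\ref{thm-mono}, $c_{R_2}(R_1)<c_{R_2}(R_2)=c_0=c_{R_1}(R_1)$, so $u(R_1)>0$.

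Suppose $\min_{[0,R_1]}u\le 0$. Multiply the equation by $u^-=\max(-u,0)$, which vanishes near $R_1$, and integrate as in the uniqueness part of Theorem~\ref{thm-mono}; the boundary term at $0$ vanishes since $u'(0)=0$. This gives
\[
\int_0^{R_1} r\bigl[|(u^-)'|^2+\alpha (u^-)^2\bigr]\,dr=0 .
\]
Hence $u\ge0$. Strict positivity then follows from the strong maximum principle applied to $\Delta u=\alpha u\ge 0$ on the disk. Alternatively, if $u(r_0)=0$ at an interior minimum, then $u'(r_0)=0$ and ODE uniqueness forces $u\equiv0$, contradicting $u(R_1)>0$.

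For the derivatives, integrate $(rc')'=r\lambda(c)$ from $0$ using $c'(0)=0$:
\[
rc_R'(r)=\int_0^r s\lambda(c_R(s))\,ds .
\]
Since $\lambda$ is strictly increasing and $c_{R_2}<c_{R_1}$, we get $c'_{R_2}(r)<c'_{R_1}(r)$ for $0<r\le R_1$. For the flux inequality, write
\[
R_2c'_{R_2}(R_2)=\int_0^{R_2}s\lambda(c_{R_2})\,ds ,
\]
split it at $R_1$, and compare with $R_1c'_{R_1}(R_1)=\int_0^{R_1}s\lambda(c_{R_1})\,ds$. The piece over $[R_1,R_2]$ is positive, but the piece over $[0,R_1]$ is smaller for $R_2$, so monotonicity of the integrand alone does not settle the sign.

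This is the main obstacle. My plan is to use instead the identity $rc'(r)=\int_0^r s\lambda(c)\,ds$ together with a scaling or shift comparison: compare $c_{R_2}(\cdot+R_2-R_1)$ with $c_{R_1}$ on $[0,R_1]$ in the boundary-layer variable $\eta=R-r$. The shifted function satisfies $c''+c'/(r+R_2-R_1)=\lambda(c)$. This has a smaller first-order coefficient, so it is a supersolution-type comparison, and it takes the value $c_0$ at the right endpoint with a larger left-end value. A maximum-principle comparison should then give
\[
c_{R_2}(R_2-\eta)\le c_{R_1}(R_1-\eta)
\]
and hence $c'_{R_2}(R_2)\ge c'_{R_1}(R_1)$. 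Multiplying by $R_2>R_1$ gives the strict inequality, since $c'_{R_1}(R_1)>0$. If the comparison turns out to be delicate, the fallback is a direct ODE argument using the first integral: $\tfrac12 (c')^2-\Lambda(c)$ is decreasing along solutions because its derivative is $-(c')^2/r\le0$.

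\emph{Part (ii).} By (i), $c_R(r)$ is decreasing in $R$ and $c_R$ is increasing in $r$, so $\max_{[0,R_0]}c_R=c_R(R_0)$ and the limit $L(r)=\lim_{R\to\infty}c_R(r)\ge0$ exists and is nondecreasing in $r$. Suppose $L(R_0)=\ell>0$. Then for all $R$ and all $r\ge R_0$ we have $c_R(r)\ge c_R(R_0)\ge \ell$, so
\[
rc_R'(r)\ge \lambda(\ell)\,\frac{r^2-R_0^2}{2}, \qquad\text{i.e.}\qquad c_R'(r)\ge \frac{\lambda(\ell)}{2}\Bigl(r-\frac{R_0^2}{r}\Bigr).
\]
Integrating from $R_0$ to $R$ gives
\[
c_0-\ell\ge c_R(R)-c_R(R_0)\ge C(R-R_0)^2/\text{const}\to\infty
\]
as $R\to\infty$, a contradiction. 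Hence $\ell=0$.
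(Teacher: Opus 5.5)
Your proof is correct in substance, and for the flux inequality $R_1c'_{R_1}(R_1)<R_2c'_{R_2}(R_2)$ it takes a genuinely different route from the paper. That step, however, is only sketched (``should then give''), and your description of the left endpoint is wrong. Here is what makes it work.

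Let $d=R_2-R_1$, $v(r)=c_{R_2}(r+d)$, $w=c_{R_1}$, and $\phi=v-w$ on $[0,R_1]$. The smaller coefficient $1/(r+d)<1/r$ helps only because $v'>0$ (Theorem~\ref{thm-mono}):
\[
(r\phi')'=r\beta\phi+\frac{d}{r+d}\,v'\ge r\beta\phi,\qquad \beta=\frac{1}{(1+v)(1+w)}>0 .
\]
So $v$ is a subsolution, $\Delta v\ge\lambda(v)$.

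At $r=R_1$ you have $\phi=0$. At $r=0$ no information on values is needed, because the weight gives $r\phi'\phi^+\to0$. In particular, $v$ does \emph{not} have the larger value at the left end. The comparison forces $v(0)<w(0)$, since $\phi\le0$ and $\phi'(0)=c'_{R_2}(d)>0$.

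Testing with $\phi^+$ exactly as in your first step gives $\int_0^{R_1}r\big[|(\phi^+)'|^2+\beta(\phi^+)^2\big]dr\le0$. Hence $c_{R_2}(R_2-\eta)\le c_{R_1}(R_1-\eta)$ for $0\le\eta\le R_1$. Equality at $\eta=0$ then gives $c'_{R_2}(R_2)\ge c'_{R_1}(R_1)>0$, and multiplying by $R_2>R_1$ yields the strict inequality. The first-integral fallback is not needed.

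\textbf{Comparison with the paper.} The paper rescales to the unit disk, $\Delta\hat u_R=R^2\lambda(\hat u_R)$ in $B$ with $\hat u_R=c_0$ on $\partial B$. It observes that $\hat u=\hat u_{R_1}-\hat u_{R_2}$ satisfies $\Delta\hat u-b\hat u=(R_1^2-R_2^2)\lambda(\hat u_{R_2})<0$ with $b>0$ and $\hat u=0$ on $\partial B$. It then concludes with the maximum principle, the strong maximum principle and Hopf's lemma.

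\emph{What the paper's route buys:}
\begin{itemize}
\item It needs only positivity of $c_{R_2}$.
\item It does not rely on the radial ODE, so it would extend to dilations of non-radial domains.
\item It compares profiles at the same \emph{relative} position, $c_{R_2}(R_2s)<c_{R_1}(R_1s)$.
\end{itemize}

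\emph{What your route buys:}
\begin{itemize}
\item It is one-dimensional and elementary, though it needs $c'_{R_2}>0$.
\item It compares profiles at the same \emph{distance from the edge}. Since $c_{R_2}$ is increasing, this implies the paper's pointwise comparison.
\item It is stronger at the boundary: the flux density $c'_R(R)$ itself is nondecreasing in $R$. Because $R'(t)=c'_{R(t)}(R(t))$, this makes $R'$ nondecreasing in $t$, so $\lim_{t\to\infty}R'(t)$ exists. That sharpens Theorem~\ref{t:dynamics}, where only the monotonicity of $([R(t)]^2)'$ is obtained.
\end{itemize}

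\textbf{The remaining steps.} These match the paper up to presentation.
\begin{itemize}
\item Your $u^-$ energy test replaces the paper's case analysis at a minimum point. For strict positivity, apply the strong maximum principle to $\Delta u-\alpha u=0$ at a nonpositive interior minimum, not to the subharmonic inequality $\Delta u\ge0$, which controls maxima. Your ODE alternative would need extra care at the singular point $r_0=0$.
\item The derivative comparison is the paper's integral identity.
\item Part (ii) is the paper's argument, with the monotone limit from (i) replacing a subsequence. The final bound should read $\frac{\lambda(\ell)}{2}\big[\frac{R^2-R_0^2}{2}-R_0^2\ln\frac{R}{R_0}\big]\ge\frac{\lambda(\ell)}{4}(R-R_0)^2$.
\end{itemize}
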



\begin{proof} Part (i). 
Let $\tilde{c}(r) = c_{R_1} (r) - c_{R_2}(r)$ $(0 \le r \le R_1)$. We need to show that 
$\tilde{c}(r) > 0$ and $\tilde{c}'(r) > 0$ for any $r \in [0, R_1].$
Note that 
\[
c_{R_1}(R_1)=c_0 = c_{R_2} (R_2) > c_{R_2}(R_1).
\]
We then have 
\begin{equation}\label{c-tilda}
\left\{
\begin{array}{l}
(r\tilde{c}'(r))'= r \tilde{\lambda}(r)\tilde{c}(r) \qquad\forall r \in (0, R_1),\\
\tilde{c}'(0)=0 \qquad \mbox{and} \qquad 
\tilde{c}(R_1)=c_0 - c_{R_2}(R_1) >0, 
\end{array}
\right.
\end{equation}
where 
\begin{equation*}
0 < \tilde{\lambda}(r):=\dfrac{1}{(1+c_{R_1}(r))(1+c_{R_2}(r))}<1\qquad \forall r \in [0, R_1].
\end{equation*}

First, let us show $\tilde{c}(r)\geq 0$ for all $r\in [0,R_1]$. 
Let $r_\ast \in [0, R]$ be such that $\tilde{c}(r_\ast) = \min_{0 \le r \le R} \tilde{c}(r).$ 
It suffices to show that $\tilde{c}(r_\ast) \ge 0$. Suppose $\tilde{c}(r_\ast) < 0.$ 
Note that $\tilde{c}(R_1)>0$. Thus, $0 \le r_\ast < R_1.$
We have $\tilde{c}'(r_\ast) = 0 $ regardless $0 < r_\ast < R_1$ or $r_\ast = 0.$
Suppose $0 < r_\ast < R_1$. Then, 
the equation in \eqref{c-tilda} at $r=r_*$ reduces to $\tilde{c}''(r_*)=\tilde{\lambda}(r_*)\tilde{c}(r_*)< 0$, 
since $\tilde{\lambda}(r_*)>0$.  Thus, 
$\tilde{c}$ attains a strict local maximum at~$r_*$, contradicting \ that $r_\ast$ is a minimum point of $\tilde{c}$
on $[0, R_1].$ 
Suppose $r_*=0$. Note that 
\[
\dfrac{\tilde{c}'(r)}{r}=\dfrac{\tilde{c}'(r)-\tilde{c}'(0^+)}{r-0}\to \tilde{c}''(0^+) \qquad \text{ as }r\to 0.
\]
Dividing the differential equation in \eqref{c-tilda} by $r>0$ and sending $r \to 0$, we get 
$2\tilde{c}''(0^+)=\tilde{\lambda}(0)\tilde{c}(0)< 0$. Hence $\tilde{c}'(r)$ decreases 
for $r \in [0, \delta]$ for some small $\delta > 0$, and thus $\tilde{c}(\delta) < \tilde{c}(0).$ This contradicts the fact that 
$r_\ast = 0$ is a minimum of $\tilde{c}$ on $[0, R_1].$ 

Next, we show that $\tilde{c}(r)>0$ for any $r \in [0, R_1]$. Since $\tilde{c} \ge 0$ on 
$[0, R]$, we have by the equation in \eqref{c-tilda} that 
$(r \tilde{c}'(r))' \ge 0$ on $(0, R)$. Integrating both sides and using the condition $\tilde{c}'(0) = 0$, we
then have $r \tilde{c}'(r) \ge 0$ and hence $\tilde{c}'(r) \ge 0$ for any $r \in (0, R). $
Thus, $\tilde{c}$ increases on $[0, R]$. It suffices to show now $\tilde{c}(0) > 0.$ 
Suppose $\tilde{c}(0) = 0$. 
Let $0<r_0<\min\{R,2\}$ and integrate the equation in \eqref{c-tilda} twice to get
\[
\tilde{c}(r_0)=\int_0^{r_0}\dfrac{1}{r}\int_0^{r}s\tilde{\lambda}(s)\tilde{c}(s)\,\text{d}s\text{d}r.
\]
If $\max_{0 \le s \le r_0} \tilde{c}(s) > 0$, then the fact that
$\tilde{\lambda}(s)<1$ and  that $\tilde{c}'\ge 0$ on $(0, R_1)$ imply
\[
0 \le \tilde{c}(r_0) \leq \dfrac{r_0^2}{4}\max_{0\leq s \leq r_0} \tilde{c}(s) < \max_{0\leq s \leq r_0}\tilde{c}(s)
= \tilde{c}(r_0), 
\]
a contradiction. If $\max_{0 \le s \le r_0} \tilde{c}(s) = 0$, then both $\tilde{c}$ and the zero function
are solutions to the initial-value problem of the equation in \eqref{c-tilda} 
for $r \in (r_0, R_1)$ with the initial values $\tilde{c}(r_0) = \tilde{c}'(r_0) = 0.$
The uniqueness of such a solution implies that $\tilde{c} = 0$ on $(0, R_1)$, contradicting
$\tilde{c} (R_1) > 0$.  Hence, $\tilde{c}(0) > 0$, and $\tilde{c}(r) > 0$ for any $r \in [0, R].$

Integrating both sides of the equation in \eqref{c-tilda}, we get
\[
r\tilde{c}'(r) = \int_0^r s \lambda (\tilde{c}(s))\, ds > 0 \qquad\forall r \in (0, R_1].
\]
Hence, $\tilde{c}'(r) > 0$ if $r \in (0, R_1].$


For any $ R > 0$, let $c_R\in C^2([0, R])$ be the unique positive solution of 
\eqref{eq:radial-simple} with $c$ replaced by $c_R.$ Define $\hat{c}_R(\hat{r}) = c_R(r)$ 
with $\hat{r} = r/R \in [0, 1]$ and $r \in [0, R]$. Then, $\hat{c}_R \in C^2([0, 1])$ is the 
unique solution to 
\[
(\hat{r} \hat{c}_{R}'(\hat{r}))' = R^2 \hat{r}\lambda (\hat{c}_R(\hat{r})) 
\quad \forall \hat{r} \in (0, 1) \qquad \mbox{and} \qquad 
\hat{c}_R'(0)= 0, \qquad \hat{c}_R(1) = c_0.
\]
Let $\hat{u}_R (x) = \hat{c}_R(|x|)$ for $x = (x_1, x_2) \in \overline{B}$, where 
$B = B_1(0)$ is the the unit disk centered
at the origin. Then, similar to the proof of Theorem~\ref{thm-mono}, we infer
that $\hat{u}_R \in C^\infty(\overline{B})$ satisfies $\hat{u}_R> 0$ in $B$ and 
\[
\Delta \hat{u}_R = R^2 \lambda (\hat{u}_R) \quad \mbox{in } B 
\qquad \mbox{and} \qquad \hat{u}_R = c_0 \quad \mbox{on } \partial B. 
\]
With $R = R_1$ and $R = R_2$ for $0 < R_1 < R_2$, we get the solutions $c_{R_i}$, 
$\hat{c}_{R_i}$, and $\hat{u}_{R_i}$ $(i = 1, 2)$ to their respective boundary-value
problems. Let $\hat{u}= \hat{u}_{R_1} - \hat{u}_{R_2} \in C^\infty(\overline{B}).$ Then, 
\begin{equation}
\label{hatubf}
\Delta \hat{u}(x)  - b(x) \hat{u}(x) = f(x) \quad \forall x \in B
\qquad \mbox{and} \qquad \hat{u} (x) = 0 \quad \forall x \in\partial B,
\end{equation}
where 
\[
b(x) = \dfrac{R_1^2}{(1+\hat{u}_{R_1})(1+\hat{u}_{R_2})}\quad \text{ and }\quad 
f(x)=\dfrac{(R_1^2-R_2^2)\hat{u}_{R_2}}{1+\hat{u}_{R_2}}. 
\]
Both $b$ and $f$ are smooth and bounded with $b> 0$ and $f < 0$ in $\overline{B}.$
The Maximal Principle \cite{GilbargTrudinger98,EvansBook2010} implies that $\hat{u} \ge 0$ in $B$ and
the Strong Maximum Principle further implies that
$\hat{u}>0$ in $B$. Consequently, by the boundary condition 
$\hat{u} = 0$ on $\partial B$ and Hopf's lemma \cite{GilbargTrudinger98}, 
we get that $\partial_\nu\hat{u}<0$ on $\partial B$.  

 Hence, $\hat{c}_{R_1}'(1) < \hat{c}_{R_2}'(1)$, which is the same 
 as $R_1 c_{R_1}'(R_1) < R_2 c_{R_2}'(R_2).$

Part (ii). 
By Theorem~\ref{thm-mono}, we need only to show that $\lim_{R\to \infty} c_{R}(R_0) = 0. $ Suppose this were not true. Then, there 
exist $c_\ast > 0$ and $R_k > R_0$ $(k = 1, 2, \dots)$  such that $R_k\nearrow \infty$ and 
$c_{R_k}(R_0)\ge c_{*}$ for all $k \ge 1.$
It then follows from Theorem~\ref{thm-mono} that $c_{R_k}(r)
\ge c_{*}$ for all $r\in [R_0,R_k]$ and all $k \ge 1$. By the equation for $c_{R} = c_{R}(r)$, we get
\[
(rc_{R_k}'(r))' = r \lambda(c_{R_k}(r)) \geq  r \lambda(c_{*}) \qquad \forall r \in [R_0, R_k] \ \forall k \ge 1.
\]
Integrating this inequality from $R_0$ to $r \in (R_0, R_k)$ with $k \ge 1$ leads to 
\begin{equation*}
rc_{R_k}'(r)\geq R_0c_{R_k}'(R_0)+ \frac{\lambda(c_{*})}{2} \left(r^2 - R_0^2\right). 
\end{equation*}
Noting that $c_{R_k}'(R_0)\geq 0$ and 
dividing by $r$, we obtain  
\begin{equation*}
c_{R_k}'(r)\geq \dfrac{\lambda(c_*)}{2}\left(r-\dfrac{R_0^2}{r}\right) \qquad \forall r \in [R_0, R_k] \ \forall k \ge 1.
\end{equation*}
Integrating this inequality from $R_0$ to $R_k$ with respect to $r$ and using the fact that  $c_{R_k}(R_k)=c_0$, we obtain
\begin{equation*}
c_0 \geq c_{R_k}(R_0) + \dfrac{ \lambda(c_*)}{2}\left[\dfrac{R_k^2}{2}-\dfrac{R_0^2}{2}-R_0^2\ln\left(\dfrac{R_k}{R_0}\right)\right].
\end{equation*}
Since each $c_{R_k} (R_0) \ge c_\ast > 0$ and $\lambda(c_\ast) > 0$,  we arrive at a contradiction
by sending $k$ to infinity. 
\end{proof}






We recall that the modified Bessel function of first kind $I_k = I_k (x) $ $(x > 0)$ of order $k$ is a bounded solution of 
the equation 
$(xI_k')'=[(x^2+k^2)/{x}]I_k$ \cite[Section 5.7]{silverman1972special}. It has the integral representation 
\cite[10.23.2]{DLMF} 
\[
I_k(x)=\dfrac{1}{\pi}\int_0^{\pi}e^{x\cos(\theta)}\cos(k\theta)\,\text{d}\theta > 0 \qquad \forall x > 0. 
\]
One verifies that $I_0'(x) = I_1(x)$ and by the Cauchy--Schwarz inequality
$[I_1(x)]^2 < I_0(x) I_1'(x) $ for all $x > 0$, leading to $(I_1(x)/I_0(x))'>0$. Hence, 
$I_1(x)/I_0(x)$ increases strictly for $x > 0$.


\begin{theorem}
\label{t:radialexp}
    Let $c_0>0$ and $R>0$. Let $c_{R} = c_{R}(r)$ $(0 \le r \le R)$ 
    be the solution of \eqref{eq:radial-simple} with $c_R$ replacing $c$. Then 
    \begin{equation}
    \label{cRp}
    C_1 \leq c_{R}'(R)\leq C_2 \qquad \forall R > 1, 
    \end{equation}
    where 
    \begin{equation}
    \label{C1C2}
C_1 =  \dfrac{ c_0\sqrt{\tilde{\alpha}}I_1(\sqrt{\tilde{\alpha}})}{2I_0(\sqrt{\tilde{\alpha}})}>0
\quad \mbox{with}
\quad \tilde{\alpha} = \frac{1}{1 + c_0}
\qquad 
\mbox{and} \qquad  C_2= c_0 + 2.
\end{equation}
\end{theorem}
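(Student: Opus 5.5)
For the lower bound I will use a comparison argument with the linear problem, and for the upper bound a first-integral estimate. For the lower bound, note that since $0<c_R<c_0$ on $[0,R)$ (Theorem~\ref{thm-mono}), we have $\lambda(c_R)=c_R/(1+c_R)\ge \tilde{\alpha}\, c_R$ with $\tilde\alpha=1/(1+c_0)$. The key reduction is to work only on the outer unit annulus $[R-1,R]$ (this is where $R>1$ enters). Even so, a cleaner route is the following. For the linear problem $(r w')'=\tilde\alpha r w$, $w'(0)=0$, $w(R)=c_0$, the solution is $w(r)=c_0 I_0(\sqrt{\tilde\alpha}\,r)/I_0(\sqrt{\tilde\alpha}\,R)$. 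Its outward flux is $w'(R)=c_0\sqrt{\tilde\alpha}\,I_1(\sqrt{\tilde\alpha}R)/I_0(\sqrt{\tilde\alpha}R)$.

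I will show $c_R'(R)\ge w'(R)$. Set $v=c_R-w$. Then $(rv')'-\tilde\alpha r v=r(\lambda(c_R)-\tilde\alpha c_R)\ge 0$, with $v'(0)=0$ and $v(R)=0$. The maximum-principle argument used in Proposition~\ref{p:cDisk}(i), handling an interior minimum and the endpoint $r=0$ via $2v''(0^+)$, shows that $v$ cannot have a positive interior maximum. Hence $v\le 0$ on $[0,R]$. Since $v(R)=0$, it follows that $v'(R)\ge 0$, i.e.\ $c_R'(R)\ge w'(R)$. By the strict monotonicity of $I_1/I_0$ noted just before the theorem, and $R>1$, we get $w'(R)\ge c_0\sqrt{\tilde\alpha}\,I_1(\sqrt{\tilde\alpha})/I_0(\sqrt{\tilde\alpha})$, which is $\ge C_1$; the factor $1/2$ just gives slack.

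For the upper bound I will integrate the equation in the form $c''+c'/r=\lambda(c)$. Since $c_R'>0$, we have $c''\le \lambda(c)\le 1$. A crude bound from this alone is not enough, so I multiply by $c'\ge 0$:
\[
\tfrac12\big((c')^2\big)' \le \lambda(c)c' = \Lambda(c)'.
\]
Integrating from $0$ to $R$ with $c'(0)=0$ gives $\tfrac12 c_R'(R)^2\le \Lambda(c_0)-\Lambda(c_R(0))\le \Lambda(c_0)\le c_0$. Hence $c_R'(R)\le \sqrt{2c_0}\le c_0+2$ by AM--GM ($\sqrt{2c_0}\le (c_0+2)/2$). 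This is the claimed $C_2$, and it holds for every $R>0$.

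The main obstacle is the comparison step. It requires that $\tilde\alpha c_R\le\lambda(c_R)$ pointwise, which uses the a priori bound $c_R<c_0$, and it requires the maximum principle to be applied carefully at the singular endpoint $r=0$. For the latter I would reuse verbatim the argument from Proposition~\ref{p:cDisk}(i), or alternatively pass to the 2D disk $B_R$ and apply the classical weak maximum principle to $\Delta v-\tilde\alpha v\ge0$, $v=0$ on $\partial B_R$, followed by Hopf-type reasoning for the sign of the normal derivative.
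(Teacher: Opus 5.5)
Your proof is correct, and it takes a genuinely different route from the paper's.

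\textbf{The paper's route.} The paper works entirely with the energy $\hat{\mathcal{E}}_R[u]=2\pi\int_0^R[\frac12(u')^2+\Lambda(u)]\,r\,dr$.
\begin{itemize}
\item Testing the equation with $c$ gives $c_0Rc'(R)=\int_0^R[(c')^2+\lambda(c)c]\,r\,dr$.
\item The inequality $\Lambda(x)\le x\lambda(x)\le 2\Lambda(x)$ then traps the flux between $\hat{\mathcal{E}}_R[c]/(2\pi c_0R)$ and $\hat{\mathcal{E}}_R[c]/(\pi c_0R)$.
\item The energy is bounded above by comparison with the test function $c_0e^{r-R}$, using convexity and the Euler--Lagrange equation.
\item The energy is bounded below via $\Lambda(x)\ge\tilde\alpha x^2/2$ and the minimum of the quadratic functional. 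That minimum equals $\pi c_0R\,\tilde c'(R)$ for the same Bessel profile you use.
\end{itemize}

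\textbf{Your route.} You replace both energy estimates by pointwise arguments. For the lower bound, the subsolution inequality $(rv')'-\tilde\alpha rv\ge0$ and the maximum principle give $c_R\le w$, hence $c_R'(R)\ge w'(R)$. For the upper bound, you drop the nonnegative term $c'/r$ and multiply by $c'\ge0$, which gives $c_R'(R)\le\sqrt{2\Lambda(c_0)}$.

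\textbf{What each approach buys.} Your route gives sharper constants:
\begin{itemize}
\item Your lower bound is $2C_1$ rather than $C_1$. The paper's factor $1/2$ is exactly the loss in its flux-to-energy step.
\item Your upper bound $\sqrt{2\Lambda(c_0)}\le\sqrt{2c_0}\le(c_0+2)/2$ holds for every $R>0$, not just $R>1$. It also coincides with the flux $-\bar{\bar{c}}'(0)=\sqrt{2\Lambda(c_0)}$ of the limiting boundary-layer profile in Subsection~\ref{ss:numersimul-disk}, i.e.\ the formally predicted large-$R$ limit.
\end{itemize}
The paper's route stays inside the variational framework of Theorems~\ref{t:3Dminimizer} and \ref{thm-mono}. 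It needs no pointwise analysis at the singular endpoint $r=0$, and it shows along the way that the minimal energy grows linearly in $R$. Either version serves Corollary~\ref{c:boundarylayer} and Theorem~\ref{t:dynamics} equally well, yours with better constants.

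One small cleanup: your opening remark about restricting to the annulus $[R-1,R]$ plays no role and should be deleted. The hypothesis $R>1$ enters only through the monotonicity of $I_1/I_0$.
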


\begin{proof}
Let us denote $c (r) = c_R(r)$ $(0 \le r \le R)$ for simplicity. 
Define 
\begin{equation}\label{radial-energy-functional}
\hat{\mathcal{E}}_{R}[u]=2\pi
\int_{0}^{R}\left[ \dfrac{1}{2}(u'(r))^2 +  \Lambda(u(r))\right]r\,\text{d}r \qquad 
\forall u \in H^1((0, R)).
\end{equation}
We shall prove 
\begin{eqnarray}
\label{I1}
&\dfrac{\hat{\mathcal{E}}_{R} [c]}{2 \pi c_0 R}\leq c'(R) \leq  \dfrac{ \hat{\mathcal E}_{R} [c]}{\pi c_0 R},& \\
\label{I2}
&\dfrac{\pi c_0^2\sqrt{\tilde{\alpha}}I_1(\sqrt{\tilde{\alpha}})}{I_0(\sqrt{\tilde{\alpha}})}R
\leq \hat{\mathcal{E}}_R  [c] \leq \pi c_0 (c_0 + 2) R, &
\end{eqnarray}
which will imply \eqref{cRp} and complete the proof.

{\it Proof of \eqref{I1}.} Multiply the first equation in \eqref{eq:radial-simple} by $c(r)$ and integrate over $r$ to get
\begin{equation*}
\int_0^R (rc'(r))'c(r) \,\text{d}r = \int_0^R r\lambda(c(r)) c(r)\,\text{d}r.
\end{equation*}
By integration by parts for the left-hand side and the boundary condition $c(R) = c_0$, we obtain
\begin{equation}
\label{c0Rc}
c_0Rc'(R)=\int_0^R \left[ (c'(r))^2+\lambda(c(r))c(r) \right] r\, \text{d}r.
\end{equation}
Note that the function $f(x):=x\lambda(x)/\Lambda(x)$ is strictly positive, 
$f(x) \to 2$ as $x \to 0^+$, and $f(x) \to 1 $ as $x \to \infty$. Moreover, 
$f'(x) = -\ln (x+1) [\lambda(x)/\Lambda(x)]^2 < 0$ on $(0, \infty)$, and hence, 
$f(x) $ is decreasing on $(0, \infty)$. Thus,
\begin{equation} 
\label{LLambda}
\Lambda (x) \le x \lambda (x) \le 2 \Lambda (x) \qquad \forall  x \in [0, \infty). 
\end{equation}
This, together with \eqref{c0Rc} with $x = c(r)$ and \eqref{radial-energy-functional}, implies \eqref{I1}. 

{\it Proof of \eqref{I2}.}
 Let $\hat{c}(r)=c_0e^{r-R}$ $(0 \le r \le R)$. Then, 
 \begin{align*}
\hat{\mathcal{E}}_R [\hat{c}] &= 
2\pi \int_0^R \left\{\dfrac{c_0^2}{2}
e^{2(r-R)}
+ \left[ c_0e^{r-R}
-\ln(1+c_0e^{r-R})
\right]\right\}\,r\text{d}r \\
& \le 2 \pi \int_0^R \left[ \frac12 c_0^2 e^{2 (r - R)} + c_0 e^{r - R} \right] r \, dr \\
&\leq  \pi c_0(c_0+2) \int_0^R e^{r-R}\,r\text{d}r \\
&= \pi c_0(c_0+2)\left( R+e^{-R}-1\right)\\
& <  \pi c_0(c_0+2) R. 
\end{align*}
 Let $\phi (r) = \hat{c}(r) - c(r)$ $(0 \le r \le R)$. Note that $\phi(R) = 0$. By the convexity
 of $F(s)=s^2/2$ and $F(s)=\Lambda(s)$ for $s > 0$, we have 
 $F(\hat{s})-F(s)\geq F'(s)(\hat{s}-s)$ if $s, \hat{s} > 0$. 
 Thus, it follows from integration by parts and the differential equation for $c$ in 
 \eqref{eq:radial-simple} that 
 \begin{align*}
     \frac{1}{2 \pi} \left( \hat{\mathcal E}_R[\hat{c}]  - \hat{\mathcal E}_R [c] \right)
     &\ge \int_0^R\left[ c'(r) \phi'(r) + \lambda (c(r)) \phi(r) \right] r dr 
    \\
    &=\int_0^R \left[ - (r c'(r))' + r\lambda(c(r)) \right] \phi(r) \, dr 
    \\
    & = 0. 
 \end{align*}
These two inequalities imply the upper bound in \eqref{I2}.


Note from \eqref{LLambda} that 
\[
\Lambda(x)\geq \frac{\lambda(x)x}{2} \ge \dfrac{1}{2(1+c_0)}x^2 \qquad \mbox{if } 0 \le x \le c_0.
\]
Thus, 
\begin{equation}
\label{lowB1}
\hat{\mathcal{E}}_R [c] \ge 2 \pi\int_0^R \left[ \frac12 \left( c'(r) \right)^2 + \dfrac{\tilde{\alpha}}{2} 
(c(r))^2 \right] r\, \text{d}r = \tilde{\mathcal E}_{R}[c], 
\end{equation}
where $\tilde{\alpha} = 1/(1 + c_0)$  and 
\[
\tilde{\mathcal E}_{R} [u ] = 2 \pi \int_0^R \left[\frac12 (u'(r))^2 + \frac{\tilde{\alpha}}{2} (u(r))^2 \right] r \, dr
\qquad \forall u \in H^1((0, R)).
\]
By the same argument used in proving Theorem~\ref{thm-mono}, we infer the existence of 
$\tilde{c} \in C^2([0, R])$ that solves the boundary-value problem
\begin{equation}\label{c-hat}
\left\{
\begin{array}{l}
(r\tilde{c}'(r))'=\tilde{\alpha} r \tilde{c}(r) \qquad \mbox{if } 0 < r < R,\\
\tilde{c}'(0) = 0\qquad \mbox{and} \qquad 
\tilde{c}(R) =c_0.
\end{array}
\right.
\end{equation}
Similar to the proof of $\hat{\mathcal E}_R[\hat{c}] \ge \hat{\mathcal E}_R[c]$, we have 
$\tilde{\mathcal E}_R[c] \ge \tilde{\mathcal E}_R[\tilde{c}].$ This and \eqref{lowB1} imply that
\begin{align}
\hat{\mathcal E}_R[c] &\ge \tilde{\mathcal E}_{R}[\tilde{c}]\nonumber \\
& = 
2\pi \int_0^R \left[\dfrac{1}{2}(\tilde{c}'(r))^2 + \dfrac{\tilde{\alpha}}{2} (\tilde{c}(r))^2 \right]r\, 
\text{d}r \nonumber \\
&=-\pi \int_0^R \left[(r\tilde{c}'(r))' - \tilde{\alpha} r \tilde{c}(r)\right]\tilde{c}(r)\,\text{d}r
+\pi c_0 R\tilde{c}'(R) \nonumber\\
&=\pi c_0R\tilde{c}'(R). \label{eqn:repr_poisson}
\end{align}

The solution $\tilde{c}$ to the boundary-value problem \eqref{c-hat} is given by 
\begin{equation*}
\tilde{c}(r)=c_0\dfrac{I_0(\sqrt{\tilde{\alpha}}r)}{I_0(\sqrt{\tilde{\alpha}}R)} \qquad \forall r \in [0, R].
\end{equation*}
By the fact that $I_0'(x) = I_1(x)$ and $I_1(x)/I_0(x)$ is a strictly increasing and smooth function for $x>0$ 
(see the discussion before Theorem~\ref{t:radialexp}), we have 
\begin{equation}
\label{tcpR}
\tilde{c}'(R)=c_0\sqrt{\tilde{\alpha}}\dfrac{I_1(\sqrt{\tilde{\alpha}} R)}{I_0(\sqrt{\tilde{\alpha}} R)}
\ge c_0\sqrt{\tilde{\alpha}}\frac{I_1(\sqrt{\tilde{\alpha}})}{I_0(\sqrt{\tilde{\alpha}})} \qquad \forall R > 1.
\end{equation}
Now, \eqref{eqn:repr_poisson} and \eqref{tcpR} imply the lower bound in \eqref{I2}. 
\end{proof}

\begin{corollary}
\label{c:boundarylayer}
Let $c_0>0$ and $R>1$. Let $c_R = c_R(r)$ $(0 \le r \le R)$ be the solution of \eqref{eq:radial-simple}
with $c_R$ replacing $c.$
Given $\alpha \in (0, 1)$ and assume $R_\alpha \in (0, R)$ satisfy  $c_R(R_{\alpha })=\alpha 
c_0$ and $R_\alpha > 1.$ Then, 
\begin{equation}
\label{alphac}
\frac{(1-\alpha)c_0}{C_2} \le R - R_\alpha \le \frac{(1-\alpha) c_0}{\alpha C_1},
\end{equation}
where $C_1 > 0$ and $C_2 > 0$ are the same constants in Theorem~\ref{t:radialexp}; cf.\ \eqref{C1C2}.
\end{corollary}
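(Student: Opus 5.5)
The plan is to estimate the radial derivative between $R_\alpha$ and $R$ from both sides and then integrate. By the fundamental theorem of calculus,
\[
(1-\alpha)c_0 = c_R(R) - c_R(R_\alpha) = \int_{R_\alpha}^R c_R'(r)\,\text{d}r .
\]
It therefore suffices to show that $c_R'$ on $[R_\alpha, R]$ is bounded above by $C_2$ and below by $\alpha C_1$.

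\emph{Upper bound on $c_R'$, giving the lower bound on $R-R_\alpha$.} By Theorem~\ref{thm-mono}, $m(r):=rc_R'(r)$ satisfies $m'(r)=r\lambda(c_R(r))>0$, so $m$ is increasing. Also $c_R''=\lambda(c_R)-c_R'/r$ while $c_R'>0$, so I cannot conclude that $c_R'$ itself is monotone. I will use the equation $(rc')' = r\lambda(c)$ together with $c>0$ increasing, which makes $\lambda(c_R(r))$ increasing in $r$. This gives
\[
rc_R'(r)=\int_0^r s\lambda(c_R(s))\,\text{d}s\le \lambda(c_R(r))\frac{r^2}{2},
\]
and hence
\[
c_R''=\lambda(c_R)-\frac{c_R'}{r}\ge \frac{\lambda(c_R)}{2}>0 .
\]
So $c_R'$ is increasing on $(0,R]$. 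Then $c_R'(r)\le c_R'(R)\le C_2$ for all $r$ by Theorem~\ref{t:radialexp}, and integrating yields $(1-\alpha)c_0\le C_2(R-R_\alpha)$.

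\emph{Lower bound on $c_R'$, giving the upper bound on $R-R_\alpha$.} The idea is to apply Theorem~\ref{t:radialexp} on the smaller disk of radius $r\in[R_\alpha,R]$ after rescaling the boundary value. Consider $c_R$ restricted to $[0,r]$: it solves \eqref{eq:radial-simple} with radius $r$ and boundary value $c_R(r)\ge \alpha c_0$. Since $r\ge R_\alpha>1$, Theorem~\ref{t:radialexp} applied with $c_0$ replaced by $c_R(r)$ gives $c_R'(r)\ge C_1(c_R(r))$. Here $C_1(c_0)=c_0\sqrt{\tilde\alpha}\,I_1(\sqrt{\tilde\alpha})/(2I_0(\sqrt{\tilde\alpha}))$ with $\tilde\alpha=1/(1+c_0)$.

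This is the step I expect to be the main obstacle: the constant $C_1$ depends on the boundary value through two factors. The linear factor $c_0\mapsto c_R(r)\ge\alpha c_0$ produces the desired $\alpha$. The factor $\sqrt{\tilde\alpha}\,I_1(\sqrt{\tilde\alpha})/I_0(\sqrt{\tilde\alpha})$, however, is decreasing in $c_0$, and a smaller boundary value makes $\tilde\alpha$ larger, so that factor only increases. Because $xI_1(x)/I_0(x)$ is increasing in $x$ and $c_R(r)\le c_0$, we get $C_1(c_R(r))\ge \frac{c_R(r)}{c_0}C_1(c_0)\ge \alpha C_1$. If this monotonicity bookkeeping fails, the fallback is to redo the lower-bound part of the proof of Theorem~\ref{t:radialexp} directly on $B_r$, using the fixed coefficient $\tilde\alpha=1/(1+c_0)$, which is valid since $c_R\le c_0$ there.

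With $c_R'\ge\alpha C_1$ on $[R_\alpha,R]$, integration gives $(1-\alpha)c_0\ge \alpha C_1(R-R_\alpha)$. This completes \eqref{alphac}.
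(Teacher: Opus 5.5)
Your proof is correct and is essentially the paper's argument. As in the paper, the bound $c_R'(r)\ge\alpha C_1$ on $[R_\alpha,R]$ comes from applying Theorem~\ref{t:radialexp} on the subdisk of radius $r>1$ with boundary value $c_R(r)\ge\alpha c_0$, together with the monotonicity of $xI_1(x)/I_0(x)$, followed by the mean-value theorem (your integration is equivalent). The only difference is the upper bound $c_R'\le C_2$: the paper simply reapplies the same theorem at each $r$ to get $c_R'(r)\le c_R(r)+2\le c_0+2$, whereas you first prove $c_R''\ge\lambda(c_R)/2>0$ and invoke the theorem only at $r=R$, which is also valid but a slightly longer detour.
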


\begin{proof}
First, we note Theorem~\ref{t:radialexp} is applicable for $\hat{c}_R$, truncated on $(0,r)$ with $\hat{c}_R(r)$ instead of $c_0$. In particular, for any $r \in (R_\alpha, R)$, we have by Theorem~\ref{t:radialexp}, with $c_R(r)$ and $c_R'(r)$ 
replacing $c_0$ and $c_R'(R)$, respectively, that 
\begin{equation*}
C_1(r)  \le c_R'(r) \le C_2(r), 
\end{equation*}
where by \eqref{C1C2} and the monotonicity of $I_1(x)/I_0(x)$ $(x > 0)$ that 
\begin{align*}
    & C_1(r) = \frac{c_R(r)}{2} \sqrt{\frac{1}{1 + c_R(r)}} 
    \frac{I_1\left(\sqrt{1/(1+c_R(r))}\right)}{I_0\left(\sqrt{1/(1+c_R(r))}\right)}
    \ge \frac{c_R (R_\alpha)}{c_0} C_1 = \alpha C_1, 
    \\
    &C_2(r) = c_R (r) + 2 \le  C_2. 
\end{align*}
These  and the Mean-Value Theorem imply that  for some $r \in (R_\alpha, R)$
\[
\alpha C_1 \le C_1(r) \le \frac{c_0 - c_R (R_\alpha)}{ R - R_\alpha} = \frac{(1-\alpha) c_0}{ R - R_\alpha} \le C_2(r)
\le c_0 + 2 =  C_2,  
\]
leading to \eqref{alphac}. 
\end{proof}

\begin{rmk}
    Suppose $c_0 > 1$ which corresponds to the constant concentration of glucose being above the Monod constant $K$
    and $\alpha = 1/c_0$. Then, our result indicates that there is a peripheral ring-shaped region of monolayer
    of cells in the colony that can grow all the time. The width of this region is a constant for all the radius $R.$
    This agrees well with agent-based simulations and experiment \cite{Warren_eLife2019,Kannan_NatCommun2025}.
\end{rmk}

\subsection{Dynamics of radial expansion}
\label{ss:RadialDyn}

We first study the well-posedness of the moving-boundary problem \eqref{vfbp2d-nondim}. 

\begin{theorem}[Well-posedness of the free-boundary problem \eqref{vfbp2d-nondim-compact}]
\label{thm:hfbp-wellposed}
Let $c_0>0$ and  $R_0>0$. Then the system
\eqref{vfbp2d-nondim-compact} admits a unique global solution
\(
(c(r,t),R(t)),
\)
with
$
R\in C^2([0,\infty))$ and 
$c(\cdot, t ) \in C^2([0, R(t)])$ for all $t \geq 0.$ 
\end{theorem}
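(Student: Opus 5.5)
The plan mirrors the proof of Theorem~\ref{t:verticalmoving}: reduce \eqref{vfbp2d-nondim-compact} to a scalar ODE $R'(t)=G(R(t))$ with $G(R):=c_R'(R)$. Here $c_R$ is the unique nonnegative solution of \eqref{eq:radial-simple} given by Theorem~\ref{thm-mono}, and for each $t$ we have $c(\cdot,t)=c_{R(t)}$. Theorem~\ref{thm-mono} gives $c_R\in C^2([0,R])$, so the regularity of $c(\cdot,t)$ is automatic. What remains is to show that $G$ is positive, locally Lipschitz (in fact $C^1$) on $(0,\infty)$, and sublinear, so that the solution exists globally.

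\textbf{Lipschitz property and a priori bounds.} By Proposition~\ref{p:cDisk}(i), $R\mapsto R\,c_R'(R)$ is strictly increasing. Theorem~\ref{thm-mono} gives $G>0$. Theorem~\ref{t:radialexp} gives $G(R)\le c_0+2$ for $R>1$. For $R\le 1$, the identity $Rc_R'(R)=\int_0^R s\lambda(c_R(s))\,ds\le R^2/2$ gives $G(R)\le R/2$. Hence $0<G\le \max\{1/2,c_0+2\}$ on $(0,\infty)$. It follows that $R'>0$, so $R(t)\ge R_0>0$ and $R$ never approaches $0$. Also $R$ grows at most linearly, so no blow-up occurs in finite time.

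For local Lipschitz continuity, take $R_1<R_2$. Since $R\,c_R'(R)$ is increasing,
\[
G(R_2)-G(R_1)\ge -\Bigl(1-\frac{R_1}{R_2}\Bigr)G(R_1)\ge -\frac{C}{R_1}(R_2-R_1).
\]
For the upper bound I use $R_2G(R_2)-R_1G(R_1)=\int_0^{R_2}s\lambda(c_{R_2})\,ds-\int_0^{R_1}s\lambda(c_{R_1})\,ds$. The part over $[0,R_1]$ is $\le 0$ by the comparison $c_{R_2}<c_{R_1}$ in Proposition~\ref{p:cDisk}(i), and the part over $[R_1,R_2]$ is $\le R_2(R_2-R_1)$. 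Together these show $G$ is Lipschitz on every $[R_0,M]$. The Picard--Lindel\"of theorem together with the global bound then yields a unique global $R\in C^1([0,\infty))$.

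\textbf{$C^2$ regularity: the main obstacle.} To get $R\in C^2$ I must show $G\in C^1$, and I will use the shooting and implicit-function argument of Step~2--3 in the proof of Theorem~\ref{t:verticalmoving}. First rescale to $\hat c_R(\hat r)=c_R(R\hat r)$ on $[0,1]$, which solves $(\hat r\hat c')'=R^2\hat r\lambda(\hat c)$. Then consider the initial-value problem from the center, with $\hat w(0)=b$ and $\hat w'(0)=0$. The difficulty is the singular point $\hat r=0$, where standard smooth-dependence theorems do not apply directly. I would handle it in two steps.
\begin{compactenum}
\item[(1)] Near $0$, use the equivalent integral equation
\[
\hat w(\hat r)=b+R^2\int_0^{\hat r}\frac1s\int_0^s\tau\lambda(\hat w(\tau))\,d\tau\,ds .
\]
The associated integral operator is a contraction on a small interval, uniformly for $(R,b)$ near $(R_*,\hat c_{R_*}(0))$, with $C^1$ dependence on the parameters. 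This provides a $C^1$ solution map near $0$.
\item[(2)] Away from $0$ the equation is regular, and I continue the solution by classical ODE theory. A Gronwall estimate, as in Step~2, keeps the trajectory positive.
\end{compactenum}

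With the solution map in hand, set $\mathcal F(R,b)=\hat w_{R,b}(1)-c_0$, so that $\mathcal F(R,\hat c_R(0))=0$. The derivative $u=\partial_b\hat w$ satisfies $(\hat r u')'=R^2\hat r\lambda'(\hat w)u$ with $u(0)=1$ and $u'(0)=0$. Since $\lambda'>0$, the same positivity argument as before gives $u>0$ and $u'\ge0$, hence $\partial_b\mathcal F=u(1)\ge 1>0$. The implicit function theorem then shows that $b(R)=\hat c_R(0)$ is $C^1$. Consequently $G(R)=\hat w_{R,b(R)}'(1)/R$ is $C^1$ by the smooth dependence established above, and therefore $R\in C^2$.
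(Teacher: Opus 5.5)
Your proof is correct. The existence and uniqueness part is essentially the paper's argument. The paper also reduces to $R'=H(R)$ with $H(R)=c_R'(R)=S(R)/R$ and $S(R)=\int_0^R r\lambda(c_R)\,dr$. It then uses the monotonicity of $S$ from Proposition~\ref{p:cDisk} together with $S(R_2)-S(R_1)\le\int_{R_1}^{R_2}r\,dr$. The only difference is that the paper uses $S(R_1)\le R_1^2/2$ rather than Theorem~\ref{t:radialexp}, which yields a global Lipschitz constant $3/2$ directly.

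Where you genuinely differ is the $C^1$ regularity of $H$.

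\emph{The paper's route.} It never solves an initial-value problem from the center. It works only with the rescaled boundary-value problem and forms the difference quotient $(\hat c_{R+\varepsilon}-\hat c_R)/\varepsilon$. It introduces the solution $v_R$ of the formally linearized boundary-value problem. It then applies the maximum-principle estimate of Lemma~\ref{lemma:aux-existence} (a $C^1$ bound for $(\hat r u')'=\hat r(au+F)$, $u'(0)=0$, $u(1)=0$) repeatedly. This shows that the quotient converges to $v_R$ in $C^1$ at rate $O(\varepsilon)$ and that $v_R$ depends continuously on $R$.

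\emph{Your route.} You transplant the shooting and implicit-function argument of Theorem~\ref{t:verticalmoving}, shooting from the Neumann end $\hat r=0$.

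\emph{What each buys.} Your approach replaces the paper's difference-quotient bookkeeping with standard parameter-dependence theory. It also gives a particularly clean nondegeneracy condition: $u=\partial_b\hat w\ge 1$, because the linearized problem started from the center is monotone. The cost is handling the regular singular point $\hat r=0$ by a uniform contraction. There you should note that $\hat w'(\delta)$ is also $C^1$ in $(R,b)$, via $\hat w'(\hat r)=R^2\hat r^{-1}\int_0^{\hat r}\tau\lambda(\hat w)\,d\tau$. The paper's approach avoids the singular initial-value problem altogether and identifies $\partial_R\hat c_R$ explicitly as $v_R$.

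\emph{One detail to state.} To identify the implicit-function branch $b(R)$ with $\hat c_R(0)$, you need either continuity of $R\mapsto\hat c_R(0)$ or global uniqueness of the zero. Global uniqueness follows at once because your bound $\partial_b\mathcal F=u(1)\ge 1$ holds for every $b>0$. Alternatively, it follows from the uniqueness in Theorem~\ref{thm-mono}, since $\hat w_{R,b(R)}$ is a positive solution of the boundary-value problem.
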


To prove this theorem, we need the following lemma:
\begin{lemma}\label{lemma:aux-existence}
Let $F,a\in C^1([0,1])$ be such that $a\geq a_0>0$ on $[0, 1]$ for some constant $a_0>0$. Then there exists a 
unique $u \in C^2([0, 1])$ that solves
\begin{equation}\label{s4-sys-lemma}
(\hat{r}u'(\hat{r}))'=\hat{r}\left[a(\hat{r})u(\hat{r})
+F(\hat{r})\right]\quad \forall \hat{r} \in (0,1),\qquad u'(0)=0, \qquad 
u(1)=0. 
\end{equation}
Moreover, there exists $C=C(a) >0$, independent of $F$, such that
\begin{equation}\label{lemma-shauder-bound}
\|u\|_{C^1([0,1])}\leq C\|F\|_{C([0,1])}.
\end{equation}
\end{lemma}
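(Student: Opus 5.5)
We would prove Lemma~\ref{lemma:aux-existence} by treating \eqref{s4-sys-lemma} as the radial form of a linear two-dimensional Dirichlet problem, following the approach used in the proof of Theorem~\ref{thm-mono}. Set $U(x)=u(|x|)$, $A(x)=a(|x|)$ and $f(x)=F(|x|)$ on the unit disk $B\subset\R^2$. Then \eqref{s4-sys-lemma} is equivalent to
\[
\Delta U - A U = f \ \text{ in } B, \qquad U=0 \ \text{ on } \partial B.
\]

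\emph{Existence and uniqueness.} We minimize the strictly convex, coercive functional $\int_B \big[\tfrac12|\nabla U|^2+\tfrac12 A U^2 + fU\big]\,dx$ over $H^1_0(B)$; coercivity uses $A\ge a_0>0$. This gives a unique weak solution $U\in H^1_0(B)$. Since $A$ and $f$ are bounded, $L^p$ elliptic regularity gives $U\in W^{2,p}(B)$ for every $p<\infty$, and hence $U\in C^{1,\beta}(\overline B)$. Radial symmetry of $U$ follows either from uniqueness and rotation invariance of the problem, or by the circular-averaging argument of Theorem~\ref{thm-mono}. Writing $U(x)=u(|x|)$, the function $u$ satisfies the ODE on $(0,1)$, with $u(1)=0$, and $u'(0)=0$ by the computation \eqref{cpr}. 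For $u\in C^2([0,1])$, integrate the ODE to get $r u'(r)=\int_0^r s(au+F)\,ds$. This shows $u'(r)/r\to \tfrac12(a(0)u(0)+F(0))$, and so $u''$ extends continuously to $r=0$, as in \eqref{cpp0}. Uniqueness within $C^2([0,1])$ follows from the energy identity used at the end of the proof of Theorem~\ref{thm-mono}: multiply the difference equation by the difference $h$ and integrate to obtain $\int_0^1 r[(h')^2+a h^2]\,dr=0$.

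\emph{The bound \eqref{lemma-shauder-bound}.} We first bound $\|u\|_{C}$ by the maximum principle with a comparison function. Since $\Delta U - AU = f$ and $|f|\le M:=\|F\|_{C}$, we have $|U|\le M/a_0$. Indeed, $W=U-M/a_0$ satisfies $\Delta W - AW = f + A M/a_0 \ge f + M \ge 0$ and $W\le 0$ on $\partial B$, so $W\le0$; the lower bound is symmetric. Next we bound the derivative from the integrated ODE:
\[
|u'(r)| = \Big|\frac1r\int_0^r s\big(a u+F\big)\,ds\Big| \le \frac r2\big(\|a\|_C\, M/a_0 + M\big).
\]
Together these give $\|u\|_{C^1}\le C(a)M$, with $C$ depending only on $a_0$ and $\|a\|_C$, as required.

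\emph{Main obstacle.} The main difficulty is regularity at the singular point $r=0$, that is, showing $u\in C^2([0,1])$ rather than only $C^2((0,1])$. The integral identity $ru'=\int_0^r s(au+F)\,ds$, together with the continuity of $a$, $u$ and $F$, should handle this directly. Alternatively, one could avoid PDE theory altogether: solve the initial-value problem with $u(0)=\beta$ by Picard iteration on the integral equation $u(r)=\beta+\int_0^r \frac1t\int_0^t s(au+F)\,ds\,dt$, which is a contraction on short intervals, and then choose $\beta$ by linearity (shooting) to enforce $u(1)=0$. This shooting step works because the homogeneous solution with $\beta=1$ stays positive and increasing when $a>0$.
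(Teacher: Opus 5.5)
Your proposal is correct and follows essentially the same route as the paper: the solution is the radial profile of the unique minimizer of $\int_B [\frac12|\nabla v|^2+\frac12 a(|x|)v^2+F(|x|)v]\,dx$ over $H_0^1(B)$. The bound $\|u\|_{C([0,1])}\le a_0^{-1}\|F\|_{C([0,1])}$ comes from a maximum principle, and the derivative bound comes from the integrated identity $\hat r u'(\hat r)=\int_0^{\hat r}s(au+F)\,ds$.

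The differences are minor. You get the sup bound from the constant comparison function $M/a_0$ and the two-dimensional maximum principle, whereas the paper argues directly in one dimension at a maximum point $\hat r_0$, treating $\hat r_0\in(0,1)$ and $\hat r_0=0$ separately. You also spell out the $C^2$ regularity at $\hat r=0$ and the energy-identity uniqueness, which the paper only cites from Theorem~\ref{thm-mono}.
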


\begin{proof}
The existence, uniqueness, and regularity of solution to \eqref{s4-sys-lemma} can be obtained by using the same argument
used in the proof of Theorem~\ref{thm-mono}. Specifically, the solution $ u = u(r)$ is the radial average of 
the unique minimizer in $H_0^1(B)$ of the functional 
\[
J[v] = \int_B \left[ \frac{1}{2} |\nabla v|^2 + \frac{1}{2} a(|x|)  v^2 + F(|x|)v\right] dx \qquad \forall v \in H_0^1(B), 
\]
where $B$ is the unit disk of $\R^2$. 
The minimizer is in $H_0^1(B)\cap C^2(\overline{B})$. Hence $u \in C^2([0, 1]).$

We now prove \eqref{lemma-shauder-bound}. We may assume $F \ne 0$ for otherwise the minimizer of the above
functional $J$ is the zero function and hence $u = 0$ on $[0, 1]$. 
We first show  $\|u\|_{C([0,1])}\leq a_0^{-1}\|F\|_{C([0,1])}$.
Assume there exists $\hat{r}_0\in [0, 1]$ such that $u$ attains maximum at $\hat{r}_0$ 
on $[0, 1]$ and $u(\hat{r}_0)>a_0^{-1}\|F\|_{C([0,1])}>0$. Then, we have $\hat{r}_0 < 1$ as $u(1) = 0.$
If $\hat{r}_0\in(0,1)$, then $u'(\hat{r}_0)=0$ and $u''(\hat{r}_0)\leq 0$, 
so $\hat{r}^{-1}(\hat{r}u'(\hat{r}))'|_{\hat{r}=\hat{r}_0}\leq 0$. 
By the differential equation in \eqref{s4-sys-lemma}, we then 
have $a(\hat{r}_0)u(\hat{r}_0)+F(\hat{r}_0)\leq 0$, implying that $u(\hat{r}_0)\leq a_0^{-1} \|F\|_{C([0,1])}$. 
This is a contradiction.
If $\hat{r}_0=0$, then $u(\hat{r})>a_0^{-1}\|F\|_{C([0,1])}$ for $\hat{r}\in (0,\varepsilon)$ for some $\varepsilon>0$. Then the differential equation in \eqref{s4-sys-lemma} implies 
\begin{equation}\label{lemma:integral-formulations}
u'(\hat{r})=\int_0^{\hat{r}}\dfrac{s}{\hat{r}}\left(a(s) u(s)+ F(s)\right)\,\text{d}s>0 \qquad \mbox{if } 0< \hat{r}<\varepsilon,
\end{equation}
leading to that $u$ increases on $(0,\varepsilon)$. It contradicts the assumption that $u$ attains its maximum at $\hat{r}=\hat{r}_0=0$.
Thus, we proved that $u(\hat{r})\leq  a_0^{-1}\|F\|_{C([0,1])}$
for all $\hat{r} \in [0, 1]$. 
We obtain also $u(\hat{r})\geq - a_0^{-1}\|F\|_{C([0,1])}$ by observing that $-u$ solves \eqref{s4-sys-lemma} with $-F$ instead of $F$.

 We now show that $\|u'\|_{C([0,1])}\le C \| F\|_{C([0, 1])}$.
The integral equality \eqref{lemma:integral-formulations} applied for all $\hat{r}\in (0,1)$, together with the result just proved
above, implies that
\[
|u'(\hat{r})|\leq \int_0^{\hat{r}}\left(\|a\|_{C([0,1])}a_0^{-1}\|F\|_{C([0,1])}+\|F\|_{C([0,1])}\right)\,\text{d}s\leq (\|a\|_{C([0,1])}a_0^{-1}+1)\|F\|_{C([0,1])}.
\]
Setting $C:=a_0^{-1}+\|a\|_{C([0,1])}a_0^{-1}+1 $ completes the proof. 
\end{proof}

\begin{proof}[Proof of Theorem~\ref{thm:hfbp-wellposed}]
For each $R > 0$, let $c_R = c_R (r) \in C^\infty((0, R])\cap C^2([0, 1])$ be the unique positive solution to 
the boundary-value problem \eqref{eq:radial-simple} with $c_R$ replacing $c$; 
cf.\ Theorem~\ref{thm-mono}.
 Then, 
 \begin{equation}
\label{defineHR}
c_R'(R) = \frac{1}{R}  R c_R'(R ) = \frac{1}{R} \int_0^R ( r c_R'(r))'\, dr
 = \frac{1}{R} \int_0^R r \lambda (c_R (r))\, dr = :H(R). 
 \end{equation}
Hence, it suffices to show that the initial-value problem 
\[
R'(t) = H(R) \quad \forall t > 0 \qquad \mbox{and} \qquad R(0) = R_0 > 0
\]
has a unique solution $R \in C^1([0, \infty)).$ 

For any $R > 0$, we can write $H(R) = S(R) / R$, where 
\begin{equation*}
0 < S(R) := R c_R'(R) = \int_0^R r \lambda (c_R(r))\, dr  \le \int_0^R r \, dr = \frac12 R^2. 
\end{equation*}
Let $0 <  R_1 < R_2$. By Proposition~\ref{p:cDisk}, $c_{R_1}(r) > c_{R_2}(r)$ 
if $0 \le r \le R_1$ and $S(R_1) < S(R_2). $  Therefore, 
\begin{align*}
|S(R_1) - S(R_2)| &= S(R_2) - S(R_1)
\nonumber \\
& =\int_{R_1}^{R_2} r \lambda (c_{R_2}(r))\, dr + \int_{0}^{R_1} r \left[
\lambda(c_{R_2} (r)) - \lambda (c_{R_1} (r) )\right] dr 
\nonumber \\
& < \int_{R_1}^{R_2} r \, dr 
\nonumber \\
& = \frac12 (R_1 + R_2) | R_1 - R_2|. 
\end{align*}
It now follows that 
\begin{align*}
|H(R_1) - H(R_2)|  \le \left| \left( \frac{1}{R_1} - \frac{1}{R_2} \right) S(R_1) \right| 
+ \frac{1}{R_2}\left| S(R_1) - S(R_2)\right| 
< \frac{3}{2} | R_1 - R_2|. 
\end{align*}
Hence $H = H(R)$ is globally Lipschitz-continuous on $(0, \infty)$. Following the standard
theory of initial-value problems of ODE, we conclude that there exists a unique solution 
$R = R(t)$ on $(0, \infty).$

By Theorem~\ref{thm-mono}, $c(\cdot, t) \in C^2([0, R(t)])$ for each $ t \ge 0.$

Finally, we  show that $H \in C^1((0, \infty))$ which implies that 
$R\in C^{2}([0,\infty))$. 
Define $\hat{c}_R(\hat{r})=c_R(r)$ for $\hat{r}=r/R\in [0, 1].$ Then the function $\hat{c}_R\in C^2([0, 1])$ is the 
unique positive solution of the following boundary-value problem: 
\begin{equation}\label{s4:problem-for-c_hat}
\left\{
\begin{aligned}
&(\hat{r}\hat{c}'_R(\hat{r}))'=R^2 \hat{r}\lambda(\hat{c}_R(\hat{r})) \qquad \mbox{for } \hat{r} \in (0, 1), \\
& \hat{c}'_R(0)=0 \qquad \mbox{and} \qquad \hat{c}_R(1) = c_0. 
\end{aligned}
\right.
\end{equation}
Let $\varepsilon>0$ and $w_\varepsilon(\hat{r}):=\hat{c}_{R+\varepsilon}(\hat{r})-\hat{c}_R(\hat{r})$. 
We have
\begin{equation}
\label{problem-for-w_eps}
\left\{ 
\begin{aligned}
&(\hat{r}w'_\varepsilon(\hat{r}))'=R^2 \hat{r}a_1(\hat{r})w_\varepsilon+\varepsilon \hat{r} F_1(\hat{r}) \qquad 
\text{for } \hat{r} \in (0,1),\\
& w'_\varepsilon(0)=0 \qquad \mbox{and} \qquad w_\varepsilon(1) = 0, 
\end{aligned}
\right.
\end{equation}
where
\begin{align*}
&a_1(\hat{r})=\dfrac{1}{(1+\hat{c}_{R+\varepsilon}(\hat{r}))(1+\hat{c}_R(\hat{r}))},  
\\
&
F_1(\hat{r})=2R \lambda (\hat{c}_{R+\varepsilon}(\hat{r}))+\varepsilon \lambda(\hat{c}_{R+\varepsilon}(\hat{r})).
\end{align*}
Lemma~\ref{lemma:aux-existence} implies 
\begin{equation}
\label{s4:continuous dependence}
\|w_\varepsilon\|_{C^1([0,1])}\leq C\varepsilon. 
\end{equation}
Here and below, $C$ is a generic constant, independent of $\varepsilon$, 
that may change from line to line. 
As an immediate consequence, taking into account non-negativity of $\hat{c}_{R}$, we have that 
\begin{equation}
\label{bounds-on-lambda}
\|\lambda(\hat{c}_{R+\varepsilon})-\lambda(\hat{c}_R)\|_{C([0,1])}+\|\lambda'(\hat{c}_{R+\varepsilon})-\lambda'(\hat{c}_R)\|_{C([0,1])}\leq C\varepsilon.
\end{equation}

Introduce the boundary-value problem 
\begin{equation}
\label{problem-for-v_R}
\left\{
\begin{aligned}
& (\hat{r}v'_R(\hat{r}))'=R^2 \hat{r}\lambda'(\hat{c}_R(\hat{r}))v_R(\hat{r})+2R\hat{r}\lambda(\hat{c}_R(\hat{r})) \qquad 
\text{for} \ \hat{r} \in (0,1),\\
&v'_R(0)=0 \qquad  \mbox{and} \qquad v_R(1) = 0. 
\end{aligned}
\right.
\end{equation}
This problem is obtained by formally  differentiating \eqref{s4:problem-for-c_hat} with respect to $R$. 
By Lemma~\ref{lemma:aux-existence}, such $v_R$ exists and is unique. 
Using Lemma~\ref{lemma:aux-existence} and repeating arguments for the bound \eqref{s4:continuous dependence}, 
one can also show that 
\begin{equation}\label{last-straw}
\|v_{R+\varepsilon}-v_R\|_{C^1([0,1])}\leq C\varepsilon.
\end{equation} 
Hence,  $v_R$  depends on $R>0$ continuously. Next, define 
\[
u_\varepsilon (\hat{r}):= \dfrac{w_\varepsilon(\hat{r})}{\varepsilon}-v_R(\hat{r})=\dfrac{1}{\varepsilon}(\hat{c}_{R+\varepsilon}(\hat{r})-\hat{c}_R(\hat{r}))-v_R(\hat{r}).
\]
We verify that the function $u_\varepsilon\in C^2([0, 1])$ solves the following boundary-value problem:
\begin{equation}\label{problem-for-u-eps}
\left\{
\begin{aligned}
&(\hat{r}u_\varepsilon'(\hat{r}))'=R^2 \hat{r}\lambda'(\hat{c}_R(\hat{r}))u_\varepsilon(\hat{r})
+\varepsilon \hat{r} F_2(\hat{r}) \qquad 
\text{for } \hat{r} \in (0,1),\\
& u'_\varepsilon(0)=0 \qquad \mbox{and} \qquad u_\varepsilon(1) = 0, 
\end{aligned}
\right. 
\end{equation}
where 
\[
F_2(\hat{r})=R^2 \varepsilon^{-2}\left[\lambda(\hat{c}_{R+\varepsilon})-\lambda(\hat{c}_R) - \lambda'(\hat{c}_R)w_\varepsilon\right] + 2R\varepsilon^{-1}\left[\lambda(\hat{c}_{R+\varepsilon}
-\lambda(\hat{c}_R))\right]+\lambda(\hat{c}_{R+\varepsilon}). 
\]
Indeed, subtracting the differential equation in \eqref{problem-for-v_R} from 
that in \eqref{problem-for-w_eps}, divided by $\varepsilon$, we immediately have 
\begin{eqnarray*}
(\hat{r}u'_\varepsilon(\hat{r}))'&=& R^2 \hat{r}a_1(\hat{r})\left\{\dfrac{w_\varepsilon}{\varepsilon}\right\}+\hat{r}F_1(\hat{r})-R^2\hat{r}\lambda'(\hat{c}_R(\hat{r}))v_R(\hat{r})
-2R\hat{r}\lambda(\hat{c}_R(\hat{r}))\\
&=&R^2r\lambda'(\hat{c}_R(\hat{r}))\left\{\dfrac{w_\varepsilon}{\varepsilon}-v_R\right\}+R^2\hat{r}\left[a_1(\hat{r})-\lambda'(\hat{c}_R(\hat{r})\right]\left\{\dfrac{w_\varepsilon}{\varepsilon}\right\}\\
&& \hspace{20 pt}+\hat{r}\left[2R(\lambda(\hat{c}_{R+\varepsilon})-\lambda(\hat{c}_R))+\varepsilon\lambda(\hat{c}_{R+\varepsilon})\right].
\end{eqnarray*}
Using $a_1(\hat{r})w_\varepsilon =\lambda(\hat{c}_{R+\varepsilon})-\lambda(\hat{c}_R)$, we get  \eqref{problem-for-u-eps}.

Next, due to \eqref{s4:continuous dependence} and \eqref{bounds-on-lambda} we have that $\|F_2\|_{C([0,1])}\leq C$ and thus by applying Lemma~\ref{lemma:aux-existence} we finally get
$
\|u_\varepsilon\|_{C^1([0,1])}\leq C\varepsilon.
$
This implies that $v_R(\hat{r})=\partial_R\hat{c}_{R}(\hat{r})$ and, due to \eqref{last-straw}, $H(R)=R^{-1}\hat{c}_R'(1)$ is $C^1$ with respect to $R>0$, as desired. 
\end{proof}

\begin{theorem}
\label{t:dynamics}
Let $c_0>0$ and  $R_0>1$. Let \( (c(r,t),R(t)) \) be the solution of 
\eqref{vfbp2d-nondim-compact}. Let also $C_1$ and $C_2$ be positive constants defined in \eqref{C1C2}. Then 
$ C_1\leq R'(t) \leq C_2 $ for all $t > 0.$
Moreover, the function $([R(t)]^2)'$ increases as $t > 0$ increases and $([R(t)]^2)' = O(t)$ as 
$t \to\infty$.
\end{theorem}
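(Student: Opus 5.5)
The argument rests on the identity $R'(t)=\partial_r c(R(t),t)=c_{R(t)}'(R(t))$. This holds because, by Theorem~\ref{thm:hfbp-wellposed}, the solution satisfies $c(\cdot,t)=c_{R(t)}$, with $c_R$ the unique solution of \eqref{eq:radial-simple}. The plan is first to show that $R$ stays above $1$, then to read off the bounds on $R'$ from Theorem~\ref{t:radialexp}, and finally to obtain the claims about $([R(t)]^2)'$ from the function $S(R)=Rc_R'(R)$ and its monotonicity in Proposition~\ref{p:cDisk}~(i).

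Step 1. By Theorem~\ref{thm-mono}, $c_R'(R)>0$ for every $R>0$; in the notation of the well-posedness proof this is $H(R)=S(R)/R>0$. Hence $R'(t)=H(R(t))>0$, so $R$ is strictly increasing and $R(t)\ge R_0>1$ for all $t\ge0$. Theorem~\ref{t:radialexp} then applies at every time and gives
\[
C_1\le c_{R(t)}'(R(t))=R'(t)\le C_2,
\]
which is the first claim.

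Step 2. Write $([R(t)]^2)'=2R(t)R'(t)=2R(t)\,c_{R(t)}'(R(t))=2S(R(t))$. Proposition~\ref{p:cDisk}~(i) states that $R_1c_{R_1}'(R_1)<R_2c_{R_2}'(R_2)$ whenever $0<R_1<R_2$, so $S$ is strictly increasing. Since $R(t)$ is strictly increasing by Step 1, the composition $S(R(t))$ is strictly increasing in $t$. Therefore $([R(t)]^2)'$ increases.

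Step 3. For the growth rate, combine the two bounds from Step 1:
\[
([R(t)]^2)'=2R(t)R'(t)\le 2C_2R(t)\le 2C_2\,(R_0+C_2t).
\]
The last inequality uses $R(t)\le R_0+C_2t$, which follows by integrating $R'\le C_2$. This gives $([R(t)]^2)'=O(t)$. The matching lower bound $2C_1(R_0+C_1t)$ shows the rate is genuinely linear in $t$.

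Main obstacle. The only delicate point is Step 1: Theorem~\ref{t:radialexp} is stated only for $R>1$, so we must confirm that $R(t)$ never drops below $1$. Positivity of $c_R'(R)$ from Theorem~\ref{thm-mono} settles this. Everything else is bookkeeping with results already established.
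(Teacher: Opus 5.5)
Your proof is correct and follows the paper's argument. The bounds on $R'$ come from $R'(t)=c_{R(t)}'(R(t))$ together with Theorem~\ref{t:radialexp}; monotonicity of $([R(t)]^2)'=2S(R(t))$ comes from Proposition~\ref{p:cDisk}~(i); and the $O(t)$ growth follows from $R'\le C_2$. Your Step~1 orders things more carefully than the paper does: you first secure $R(t)\ge R_0>1$ from $H(R)>0$ and only then invoke Theorem~\ref{t:radialexp}, whereas the paper applies that theorem first and deduces only afterwards that $R$ increases.
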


\begin{rmk}
    This theorem shows that the sufficient nutrient in the peripheral region of a growing bacterial
    colony leads to the constant radial expansion as found in experiment and agent-based simulations
    \cite{Pirt_1967,Wimpenny_1981,Warren_eLife2019,Kannan_NatCommun2025}. Moreover, 
    the rate of change of the disk area is linear in time and the area increases to $\infty$ as  $t \to \infty. $
\end{rmk}

\begin{proof}[Proof of Theorem~\ref{t:dynamics}]
The bounds $C_1 \le R'(t) \le C_2$ $(t > 0)$
follows immediately from \eqref{vfbp2d-nondimc-eq-for-R} and Theorem~\ref{t:radialexp}. 
It then follows that $R(t)$ increases with $t$ and $R(t) = O(t)$ as $t \to \infty.$
By \eqref{vfbp2d-nondim}, \eqref{defineHR}, and Proposition~\ref{p:cDisk},  
the function $([R(t)]^2)' = 2 R(t) R'(t) = 2 R(t) \partial_rc(R(t), t)$ increases as time $t$ increases
and $([R(t)]^2)' = O(t)$ as $t \to\infty$.
\end{proof}

\section{Conclusion}
\label{s:Conclusion}

In this work, we studied the expansion of a bacterial colony on a hard agar substrate, focusing
on the kinetic behaviors of such expansion that have been well documented experimentally. 
Our studies are closely related to the
recent works by Warren et al.\ (2019) \cite{Warren_eLife2019} and Kannan et al.\ (2025)
\cite{Kannan_NatCommun2025} that developed a hybrid agent-based and 
reaction-diffusion equation model and predicted successfully 
the constant radial expansion of the colony for very long time (up to 65 hours)
and the near-constant expansion and then slow-down in the vertical direction. 
Here, we aimed at examining the role of the continuum reaction-diffusion equation modeling in 
the vertical and horizontal expansion. 

As the agent-based description of individual cell activities 
is limited to small colonies and short expansion time period, we constructed a three-dimensional continuum model
without the description of individual cells. The colony expansion is modeled by a moving boundary while
the local cell growth rate and the velocity field are connected through the conservation of biomass
and Darcy's law. For a fixed configuration of a colony-agar system, the equilibrium nutrient concentration is shown to 
exist uniquely.

We constructed, analyzed, and performed numerical simulations
of a one-dimensional model for the vertical expansion 
and a two-dimensional disk model for the radial expansion of the colony, both reduced from
the full three-dimensional model with biologically natural assumptions.
We showed the well-posedness of the underlying mathematical models that include the existence, uniqueness, and structure
of the nutrient concentration profile as well as the moving-boundary problem in both vertical and horizontal expansion models. 
The key findings from our detailed analysis that have significant biological consequences include:

    {\it The vertical expansion:}
    \begin{compactenum}
        \item[$\bullet$]
        The nutrient concentration drops quadratically and then exponentially in the colony. This 
  proves rigorously what has been discovered in \cite{Warren_eLife2019,Kannan_NatCommun2025} through agent-based simulations and
  experiment; 
        \item[$\bullet$]
    The height $z_\ast$ at which the nutrient concentration reaches the Monod constant, implying that above $z_\ast$ in the
    colony cells are hardly grow due to the lack of nutrient, drops exponentially fast to its limiting value during the colony vertical expansion; 
        \item[$\bullet$]
    Before slowing down, the colony height increases with acceleration and eventually gets close to a linear expansion. 
    \end{compactenum}
  
    {\it The horizontal expansion:}
    \begin{compactenum}
        \item[$\bullet$]
        Our disk model predicts a constant expansion in the large-time limit, agreeing with experiment \cite{Kannan_NatCommun2025}; 
        \item[$\bullet$]
        There is a peripheral ring region of the colony with a fixed size where there is sufficient nutrient for all the time
        during the radial expansion. This confirms the experiment reported in \cite{Kannan_NatCommun2025} on the existence 
        of such a region (about $20\, \mu m$ wide) during the expansion for more than $65$ hours. 
    \end{compactenum}


We now discuss several potential issues and possible future studies. 

(1) {\it The analysis of the full three-dimensional model with a contact-angle boundary condition}. 
While we have decoupled the vertical and horizontal dynamics of bacterial colony expansion into two separate models, 
we expect that the asymptotically linear expansion in both vertical and horizontal directions  
will persist in the three-dimensional moving-boundary model introduced in Section~\ref{ss:3Dmodel}.
The choice of a boundary condition at the colony edge is an important aspect of 
the three-dimensional continuum model; cf.\ Remark~\ref{rmk:edge-condition}.
One possibility is to impose a condition analogous to those used in capillarity problems \cite{finn1986,huh1971hydrodynamic}, such as prescribing the contact angle (or, equivalently, the slope of the colony profile at the boundary). However, experiments indicate that a bacterial colony initially grows as a nearly flat monolayer and undergoes buckling only after an extended period of approximately 10 $\sim $ 15 
hours \cite{Warren_eLife2019}, primarily due to cell-agar interactions that resist bacterial motion. Consequently, in a 
moving-boundary formulation, the colony profile is expected to meet the substrate tangentially, so that both the colony 
height and the first derivatives of
the height function will vanish at the edge. As a result, the boundary of the colony is generally non-Lipschitz, making the mathematical analysis of the nutrient concentration and pressure functions substantially more challenging.
The development and analysis of the three-dimensional model with a correct contact-angle condition 
is an immediate next step of our study.

(2) {\it Continuum modeling of mechanical interactions and analysis of buckling.}  
For a dense bacterial colony, it is known  that cell-cell and cell-environment mechanical interactions, together with metabolic interactions, are 
crucial to the formation and development of the colony 
\cite{Warren_eLife2019,Kannan_NatCommun2025,Tsimring_Ordering_PNAS2008,Waclaw_PRL2013,GrantWaclaw_JRSocInterface2014,Waclaw_Interface2017,Levine_PhaseSeparation_PNAS2015}. 
As shown in \cite{Warren_eLife2019,Kannan_NatCommun2025}, mechanical buckling of a few layers of growing cells in the colony during
an early growth phase determines the colony radial expansion rate through the ``buckling width", which is 
a $20 \,\mu m$ peripheral region of the colony. 
Elasticity models, particularly plate bending models, can be used for the mechanical interactions in colony
\cite{Tsimring_Ordering_PNAS2008,Amar_BiofilmPlates_JMPS2009,Warren_eLife2019,Kannan_NatCommun2025,Tsimring_BucklingInstab_PhysBiol2011}. 
These models have variational principles but additional assumptions may be needed as an expanding colony is a 
non-equilibrium system. 
An alternative and more interesting way is to derive the evolution of the stress field through coarse-graining
the agent-based description of individual cells in the colony.


(3) {\it The construction and analysis of a three-dimensional moving-boundary model that includes
additional biological effects, such as colony maintenance, cell starvation, dead cell zones, 
multiple bacterial strains, and multiple biochemical species.}
To understand the long-time dynamics and structures of an expanding colony, these effects need to be included in a continuum model 
\cite{Kannan_NatCommun2025}. Some of these effects, such as the colony maintenance that consumes  nutrients, 
can be modeled readily through the equation for the local cell growth. As in a continuum models, individual cells are
not represented, effects such as the cell starvation and death will need to be modeled through the metabolic interaction, with perhaps
additional assumptions. Including multiple bacterial strains and biochemical species in a continuum model can be done conceptually. But
the analysis of such a model will be very challenging. Computer simulations will then be needed to help detailed studies with
continuum modeling.

\section*{Acknowledgment}

BL was supported in part by the US National Science Foundation
through the grant  MCB-2029574 and DMS-2208465. MP was supported by the UC Riverside Regents Faculty Fellowships grant.

\bigskip

\bibliography{BiofilmsAnalysis}
\bibliographystyle{plain}


\end{document}